\numberwithin{equation}{section}
\DeclareSymbolFontAlphabet{\mathbb}{AMSb}
\DeclareSymbolFontAlphabet{\mathbbl}{bbold}
\newtheorem{thm}{Theorem}[section]
\newtheorem{thmx}{Theorem}
\newtheorem{lem}[thm]{Lemma}
\newtheorem{prop}[thm]{Proposition}
\newtheorem{cor}[thm]{Corollary}
\theoremstyle{definition}
\newtheorem{nota}[thm]{Notation}
\newtheorem{rem}[thm]{Remark}
\theoremstyle{remarks}
\newtheorem*{rem*}{Remarks}
\newtheoremstyle{case}{}{}{}{}{}{:}{ }{}
\theoremstyle{case}
\theoremstyle{plain}
\newtheorem*{conj*}{Conjecture}
\newcommand{\Tr}{\mathrm{Tr}}
\newcommand{\sym}[1]{\mathfrak{S}_{#1}}
\renewcommand{\O}{\mathcal{O}}
\newcommand{\N}{\mathrm{N}}
\newcommand{\B}{\mathcal{B}}
\newcommand{\F}{\mathbb{F}}
\newcommand{\K}{\mathbb{K}}
\title[On some trivial source Specht modules]{On some trivial source Specht modules}
\begin{document}
\author{Yu Jiang}
\address[Y. Jiang]{Division of Mathematical Sciences, Nanyang Technological University, SPMS-MAS-05-34, 21 Nanyang Link, Singapore 637371.}
\email[Y. Jiang]{jian0089@e.ntu.edu.sg}

%\author{Kay Jin Lim}
%\address[K. J. Lim]{Division of Mathematical Sciences, School of Physical and Mathematical Sciences, Nanyang Technological University, 21 Nanyang Link, Singapore 637371.}
%\email[K. J. Lim]{limkj@ntu.edu.sg}

\begin{abstract}
The paper presented here focuses on the classification of trivial source Specht modules. We completely classify the trivial source Specht modules labelled by hook partitions. We also classify the trivial source Specht modules labelled by two-part partitions in the odd characteristic case. Moreover, in the
even characteristic case, we prove a result for the classification of the trivial source Specht modules labelled by partitions with $2$-weight $2$, which justifies a conjecture of \cite{Tara}.
\end{abstract}
\maketitle
%\smallskip
\noindent \textbf{Keywords.} Trivial source module; Symmetric group; Specht module\ \ \textbf{MSC.}  20C30
\section{Introduction}
Let $G$ be a finite group and $\F$ be an algebraically closed field of positive characteristic $p$. The trivial source $\F G$-modules, defined as indecomposable $\F G$-modules with trivial sources, are important objects in the modular representation theory of $\F G$. Though the trivial source $\F G$-modules are known to have plenties of properties, it is usually very difficult to classify explicitly all the trivial source $\F G$-modules. Even in the context of group algebras of symmetric groups, the question of classifying all the trivial source modules is still poorly understood.

For the modules of symmetric groups, it is already well-known that the signed Young modules are trivial source modules (see \cite[Theorem 3.8 (b)]{DanzLim}). However, little knowledge is known in the classification of trivial source Specht modules. Let $\sym{n}$ be the symmetric group on $n$ letters. According to our knowledge, the project of classifying all trivial source Specht modules of $\F\sym{n}$ first begins in the thesis of Hudson (see \cite{Tara}). In her thesis, Hudson classified all the trivial source Specht modules labelled by partitions with $p$-weight $1$ when $p>3$ and identified the trivial source Specht modules of $\F\sym{2p}$ and $\F \sym{2p+1}$ when $p$ is odd. Moreover, for the case $p=2$, she provided the following conjecture.
\begin{conj*}\cite[Conjecture 5.0.1]{Tara}
Let $p=2$. Then the trivial source Specht modules labelled by partitions with $2$-weight $2$ are either simple or isomorphic to Young modules.
\end{conj*}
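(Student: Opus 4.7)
The plan is to analyze trivial source Specht modules in weight-$2$ blocks of $\F\sym{n}$ (characteristic $2$) by first reducing to a finite list of cases and then arguing within each of them. A Specht module $S^\lambda$ of $2$-weight $2$ lies in a block $B_\rho$ of $\F\sym{|\lambda|}$ indexed by its $2$-core $\rho$, whose defect group is a Sylow $2$-subgroup of $\sym{4}$ (dihedral of order $8$). By Scopes' theorem, many such blocks are pairwise Morita equivalent via functors that preserve Specht modules, Young modules, and trivial source modules. I would use this to reduce to a finite list of minimal $2$-cores $\rho$ and enumerate the $2$-weight $2$ partitions in each block via the $2$-abacus display.

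In each minimal block, I would use the explicit description of weight-$2$ blocks in characteristic $2$ (as studied by Chuang--Tan and Richards) to compute the composition factors and submodule structure of each Specht module $S^\lambda$. Comparing these with the composition factors of the Young modules $Y^\mu$ in the same block (obtainable from the relevant $p$-Kostka numbers) identifies precisely which $S^\lambda$ are simple and which are isomorphic to some $Y^\mu$. The ``if'' direction of the conjecture is then immediate: simple Specht modules and Young modules are both trivial source, the former by the fact recalled in the introduction that trivial source simples are exactly simple Specht modules.

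For the converse, suppose $S^\lambda$ is trivial source but neither simple nor isomorphic to a Young module. Since trivial source modules are precisely direct summands of $\Ind_Q^{\sym{|\lambda|}} \F$ for some $2$-subgroup $Q$, one would like to argue that for $p=2$ any such indecomposable summand is already a Young module. One avenue is via the theory of signed Young modules: these are known to be trivial source and, conversely, every indecomposable trivial source $\F\sym{n}$-module should turn out to be a signed Young module; in characteristic $2$ the sign character on the relevant Young subgroup trivializes, so every signed Young module coincides with an ordinary Young module. Applied to $S^\lambda$, this would force $S^\lambda \cong Y^\nu$ for some $\nu$, contradicting the assumption.

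The main obstacle is making the step ``indecomposable trivial source $\F\sym{n}$-module $\Rightarrow$ Young module'' rigorous in characteristic $2$. If it cannot be invoked cleanly from the literature, the fallback is a direct approach inside each minimal weight-$2$ block: compute the vertex of the candidate $S^\lambda$, its Brauer quotient $S^\lambda(Q)$ at each conjugacy class of $2$-subgroup $Q$ of the dihedral defect group of order $8$, and match against the list of indecomposable summands of $\Ind_Q^{\sym{|\lambda|}} \F$ for $Q$ running over subgroups of the defect group. The combinatorics of the $2$-abacus and the explicit structure theory of weight-$2$ blocks make this feasible, but the case analysis (in particular distinguishing $2$-regular from $2$-singular $\lambda$, and handling the partitions whose Specht modules have long composition series) is where most of the work would lie.
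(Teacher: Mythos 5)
Your first line of attack rests on a false premise: it is \emph{not} true that every indecomposable trivial source $\F\sym{n}$-module is a signed Young module, and hence (in characteristic $2$) a Young module. The module $S^{(2,2)}$ over $\F\sym{4}$ with $p=2$ is a counterexample: it is isomorphic to the simple module $D^{(3,1)}$, which is trivial source by Theorem \ref{T;Hemmer} and has vertex $K_4=\langle (1,2)(3,4),(1,3)(2,4)\rangle$; but by Theorem \ref{T; Youngvertices} the vertices of Young modules are Sylow $2$-subgroups of Young subgroups, and $K_4$ is not conjugate to any such group. So the implication ``trivial source $\Rightarrow$ Young module'' that you want to invoke does not hold, and in fact the whole difficulty of the conjecture is exactly that this implication fails. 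You acknowledge the step is shaky, but the proposal needs a genuine replacement, not a hoped-for citation.

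Your fallback --- reduce to minimal $2$-cores by Scopes equivalence, enumerate the weight-$2$ Specht modules via the abacus, and compare Brauer quotients against the summands of permutation modules --- is in principle workable but is left entirely as a programme; you do not isolate where the real obstruction lies. The paper's proof of Theorem \ref{Con;Hudson} instead begins with a vertex constraint: by Wildon's theorem, a non-simple trivial source $S^\lambda$ of $2$-weight $2$ has a non-cyclic vertex $P\le\sym{4}$ of order at least $4$, so $P$ is (up to conjugacy) either the Sylow $2$-subgroup of $\sym{4}$, or $C_2\times C_2$, or $K_4$. The first two cases are disposed of quickly by Brou\'e correspondence (Lemmas \ref{P;defectgroup Block} and \ref{L;C2timeC_2}), giving the Young-module identification $S^\lambda\cong Y^{\kappa_2(\lambda)+(2^2)}$. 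The crux is ruling out $P=K_4$: since $N_{\sym{n}}(K_4)=\sym{4}\times\sym{n-4}$, the Green correspondent of $S^\lambda$ is an outer tensor of self-dual modules (Proposition \ref{L;K4}), forcing $S^\lambda$ self-dual, hence (by Lemma \ref{L;p=2simple}) $\lambda$ is neither $2$-regular nor $2$-restricted; with $2$-weight $2$ this pins $\lambda$ down to a staircase-type shape, which by Giannelli's lower bound on vertices forces $P\supseteq C_2\times C_2$, a contradiction. This self-duality argument is the key structural idea missing from your proposal; without it, the case of vertex $K_4$ has no handle, and a block-by-block enumeration would have to rediscover this phenomenon in every minimal block.
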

The paper presented here pursues the topic of the project. The main approaches used in the paper depend on the fixed-point functors of symmetric groups defined in \cite{Hemmer2} and the  Brou\'{e} correspondence of trivial source $\F\sym{n}$-modules. We completely classify the trivial source Specht modules labelled by hook partitions. When $p>2$, we also classify the trivial source Specht modules labelled by two-part partitions. Moreover, for $p=2$ we prove the conjecture of Hudson. To state our results more precisely, let $\lambda$ be a partition of $n$ and denote by $S^\lambda$ and $Y^\lambda$ the Specht module and Young module labelled by $\lambda$ respectively. Take $JM(n)_p$ to be the set of partitions labelling simple Specht modules of $\F \sym{n}$. We will show the following:

\begin{thmx}\label{T;A}
Let $n$ and $r$ be integers such that $0\leq r<n$. Then $S^{(n-r,1^r)}$ is a trivial source module if and only if $(n-r, 1^r)\in JM(n)_p$ or one of the following holds:
\begin{enumerate}
%\item [\em(i)] $(n-r,1^r)\in JM(n)_p$;
\item [\em(i)] $p>3$, $n=p$, $0<r<p-1$ and $2\mid r$;
\item [\em(ii)] $p=2$, $2\nmid n$, $n\geq 2r>2$, $r\neq n-2,\ n-1$ and $n\equiv 2r+1 \pmod {2^L}$ where the integer $L$ satisfies $2^{L-1}\leq r<2^L$;
\item [\em (iii)] $p=2$, $2\nmid n$, $2r>n$, $r\neq n-2,\ n-1$ and $n\equiv 2r+1 \pmod {2^{L'}}$ where the integer $L'$ satisfies $2^{L'-1}\leq n-r-1<2^{L'}$.
\end{enumerate}
\end{thmx}

\begin{thmx}\label{T;C}
Let $p>2$ and $n$, $r$ be integers such that $n>0$ and $0\leq 2r\leq n$. Then $S^{(n-r,r)}$ is a trivial source module if and only if $(n-r,r)\in JM(n)_p$.
\end{thmx}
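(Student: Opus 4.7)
The ``if'' direction is immediate: as noted in the introduction, every simple Specht module is a trivial source module, so $(n-r,r)\in JM(n)_p$ implies that $S^{(n-r,r)}$ is trivial source. The plan for the converse is to argue by contraposition, assuming $(n-r,r)\notin JM(n)_p$, so that in odd characteristic $S^{(n-r,r)}$ is reducible, and deriving that $S^{(n-r,r)}$ is then not a trivial source module.

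The principal tools are the two approaches highlighted in the introduction. On the one hand, Hemmer's fixed-point functors send trivial source $\F\sym{n}$-modules to trivial source modules over smaller symmetric groups and, under appropriate hypotheses, carry Specht modules to Specht modules; this permits reducing a hypothetical reducible trivial-source $S^{(n-r,r)}$ to a minimal one. On the other hand, the Brou\'{e} correspondence identifies an indecomposable trivial source $\F\sym{n}$-module with vertex $Q$ with an indecomposable projective $\F[N_{\sym{n}}(Q)/Q]$-module via its Brauer quotient at $Q$. The first step would be to invoke James's classical combinatorial criterion for the simplicity of the two-row Specht module $S^{(n-r,r)}$ in odd characteristic; when the criterion fails, it produces an explicit integer $s$ with $1\le s<r$ such that $D^{(n-s,s)}$ appears as a composition factor of $S^{(n-r,r)}$.

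Next, using Hemmer's fixed-point functors, I would strip off rows inessential to the $p$-adic witness $s$ and reduce to a minimal base case in which $n$ and $r$ are controlled by $s$. In that base case, I would compute the Brauer quotient $S^{(n-r,r)}(Q)$ for a carefully chosen $p$-subgroup $Q$, typically the Sylow $p$-subgroup of a Young subgroup $\sym{\mu}$ dictated by the $p$-adic structure of $(n-r,r)$. To contradict trivial-sourceness, the goal is to show that either $S^{(n-r,r)}(Q)=0$ for every candidate vertex $Q$, or $S^{(n-r,r)}(Q)$ fails to be an indecomposable projective $\F[N_{\sym{n}}(Q)/Q]$-module --- for instance by counting composition factors or inspecting its head.

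The hard part will be the Brauer-quotient computation and the corresponding choice of $Q$. The Brauer quotient of a Specht module is not governed by a simple universal formula, so the argument must exploit the precise shape of $(n-r,r)$ and the witness $s$ to control both the dimension of the quotient (via composition-series arguments based on the Jantzen sum formula or Carter--Payne-type maps) and its module structure. A likely subtlety is that different $p$-adic witnesses may demand different reductions via the fixed-point functor and different choices of $Q$, so the argument will probably split into several cases organized by the $p$-adic expansions of $n$ and $r$.
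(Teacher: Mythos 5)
Your outline gets the first half of the argument right: both directions agree that simplicity is sufficient, and the converse proceeds by contraposition, using Carter/James simplicity criteria to locate a combinatorial witness and Hemmer's fixed-point functor to strip off rows and reduce to a minimal case. The paper also relies on Corollary~\ref{L;permutation} (that $\mathcal{F}_m$ preserves $p$-permutation modules), exactly as you suggest. However, there are two genuine gaps.

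First, you propose to work directly with the two-row partition $(n-r,r)$, but Theorem~\ref{T;Hemmer1} requires $\lambda_1<p$, which will usually fail for $n-r$. The paper works instead with the conjugate two-column partition $(2^r,1^{n-2r})$ (whose first part is $2<p$), applies the fixed-point functor there, and only at the end converts back via Lemma~\ref{L;p-permutation modules}(iii). Without this conjugation trick, the proposed reduction step does not run.

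Second, and more seriously, your endgame --- computing Brauer quotients $S^{(n-r,r)}(Q)$ and showing they vanish or fail to be indecomposable projective --- is not what the paper does, and you yourself note there is no manageable formula for these Brauer quotients. The paper sidesteps this entirely. In the minimal case it shows, via the Carter-criteria arithmetic of Lemmas~\ref{L;nu_p}--\ref{L;Case1}, that the conjugate $\lambda'$ of the reduced partition $\lambda=(2^r,1^{n-2r})$ is a \emph{James partition}, i.e., $\mathrm{Hom}_{\sym{n}}(\F,S^{\lambda'})\neq 0$ (Lemma~\ref{L;James}(i)). Then Lemma~\ref{L;Jamestrivial} closes the argument: if $S^{\lambda'}$ were trivial source with vertex $P$ it would be a summand of $\F_P\uparrow^{\sym{n}}$; that permutation module has a unique trivial submodule, contained in $Sc_{\sym{n}}(P)$; since $\lambda'\neq (n)$, Lemma~\ref{L;James}(ii) shows $S^{\lambda'}$ has no trivial quotient, so $S^{\lambda'}\not\cong Sc_{\sym{n}}(P)$, yielding two distinct trivial submodules of $\F_P\uparrow^{\sym{n}}$ --- a contradiction. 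This Scott-module/James-partition argument is the missing idea; the Brou\'{e} correspondence plays no role in the paper's proof of Theorem~\ref{T;C} (it is only used for Theorem~\ref{T;B}).
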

\begin{thmx}\label{T;B}
Let $p=2$ and $n$ be an integer such that $n\geq 4$. Let $\lambda$ be a partition of $n$ with $2$-weight $2$ and $\kappa_2(\lambda)$ be its $2$-core. Then $S^{\lambda}$ is a trivial source module if and only if $\lambda$ falls into one of the following cases:
\begin{enumerate}
\item [\em(i)] $\lambda\in JM(n)_2$;
\item [\em(ii)]$\lambda\notin JM(n)_2$, $S^\lambda\cong Y^\mu$ and $\mu=\kappa_2(\lambda)+(2,2)$.
\end{enumerate}
\end{thmx}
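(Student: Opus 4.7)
The ``if'' direction is immediate from standard facts: in case (i), $S^\lambda$ is a simple Specht module, and as noted in the introduction, the trivial source simple $\F\sym{n}$-modules are exactly the simple Specht modules. In case (ii), any Young module $Y^\mu$ is a direct summand of the permutation module $\Ind_{\sym{\mu}}^{\sym{n}}\F$ and therefore has trivial source.

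For the converse, suppose $S^\lambda$ is trivial source and $\lambda\notin JM(n)_2$. Since $\lambda$ has $2$-weight $2$, its block $B$ of $\F\sym{n}$ has defect group $D$ isomorphic to a Sylow $2$-subgroup of $\sym{4}$, namely a dihedral group of order $8$. My first step would be to reduce to finitely many cases using the Scopes equivalences between blocks of $\F\sym{n}$ of the same $2$-weight: these are known to preserve both the ``trivial source'' property and the ``isomorphic to a Young module'' property, and they are compatible with the operation $\lambda\mapsto\kappa_2(\lambda)+(2,2)$ on block labels. After this reduction, $\lambda$ ranges over a finite, explicit list obtained from the $2$-abacus.

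Within such a block $B$, the next step is to apply the Brou\'{e} correspondence, which identifies trivial source $B$-modules with certain projective indecomposable modules for $\F(N_{\sym{n}}(Q)/Q)$ as $Q$ runs through the $2$-subgroups of $D$ that can occur as a vertex. Hemmer's fixed-point functors, cited in the introduction, let me express the Brauer quotient $S^\lambda(Q)$ as a Specht-module construction for a smaller symmetric group whose label depends combinatorially on $Q$ and the abacus of $\lambda$. Combining this with the decomposition numbers for $2$-weight $2$ blocks at $p=2$ and the known structure of the Young modules in $B$, I should in each combinatorial case be able to determine whether $S^\lambda$ has trivial source, and when it does, to identify $S^\lambda$ with $Y^{\kappa_2(\lambda)+(2,2)}$.

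The main obstacle is the case analysis. The defect group $D$ has several conjugacy classes of subgroups (cyclic of orders $2$ and $4$ as well as Klein four subgroups) that could serve as a vertex, and for each the Brauer quotient must be computed and reconciled with the Young module structure. Moreover, not every $2$-weight $2$ partition $\lambda$ yields a partition $\kappa_2(\lambda)+(2,2)$, so one must separately rule out trivial source behaviour for the excluded $\lambda$'s. Managing this split, together with the small-$n$ cases where the Scopes reduction is inapplicable, will be the bulk of the technical work.
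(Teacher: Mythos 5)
Your proposal diverges substantially from the paper's route and, as written, contains gaps that would need to be filled before it could be called a proof. The paper does not use Scopes equivalences at all; instead it proceeds by tightly constraining which $2$-subgroups of $\sym{4}$ can occur as a vertex of an indecomposable non-simple $S^\lambda$, then disposing of each possibility in turn.

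The missing ideas in your sketch are the following. First, you list cyclic groups of orders $2$ and $4$ among the candidate vertices; but by Wildon's theorem \cite{Wildon}, a non-simple indecomposable Specht module in even characteristic cannot have cyclic vertex, and its vertex must have order at least $4$. That cuts the vertex list immediately to the dihedral Sylow $2$-subgroup of $\sym{4}$, the ``Young'' Klein four group $C_2\times C_2=\langle(1,2),(3,4)\rangle$, and the ``alternating'' Klein four group $K_4=\langle(1,2)(3,4),(1,3)(2,4)\rangle$. Second, the genuinely hard case is $K_4$, and your plan to compute Brauer quotients case by case against decomposition numbers will not resolve it cleanly, because $S^\lambda(K_4)$ is not a Specht-module construction via fixed-point functors ($K_4$ is not a Young subgroup). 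The paper's trick is instead to identify the Brou\'e correspondent of $S^\lambda$ inside $N_{\sym{n}}(K_4)\cong\sym{4}\times\sym{n-4}$, observe that both possible local modules ($D^{(3,1)}\boxtimes Y^\lambda$ and $Sc_{\sym{4}}(K_4)\boxtimes Y^\lambda$) are self-dual, pull self-duality back along the Green correspondence to $S^\lambda$, and then use the fact that a self-dual $2$-regular or $2$-restricted Specht module must be simple. Combined with Giannelli's lower bound on vertices for the partition $(s+2,s-1,\ldots,2,1^3)$, this eliminates $K_4$ outright. Your Scopes-reduction step would also need a proof that the equivalences preserve the trivial source property, the Young module property, and the labelling map $\lambda\mapsto\kappa_2(\lambda)+(2,2)$ simultaneously; this is plausible but is an additional burden the paper avoids. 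For the Sylow and $C_2\times C_2$ vertex cases your instinct to use Brou\'e correspondence matches the paper, which pins down $M(P)$ as a projective $\F\sym{n-2w}$-module (via Erdmann's description of $Y^\lambda(P_\lambda)$), respectively as a Young module forced by being a summand of $M^{(2^2,1^{n-4})}$.

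In short: the ``if'' direction of your proposal is fine; the ``only if'' direction is a programme rather than a proof, it omits the vertex constraint from \cite{Wildon}, and it lacks the self-duality argument that is the engine of the $K_4$ exclusion. Without that argument I do not see how the computational plan would terminate.
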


Notice that the case (ii) of Theorem \ref{T;B} indeed happens. For example, let $p=2$. It is well-known that $S^{(3,1,1)}$ is an indecomposable, non-simple, trivial source module. Theorem \ref{T;B} says that $S^{(3,1,1)}\cong Y^{(3,2)}$.

The paper is presented as follows. In Section $2$, we set up the notation used in the paper and give a brief summary of the required knowledge. Section $3$ is used to introduce fixed-point functors of symmetric groups and give some properties of them. Using these properties, in Sections $4$ and $5$, we prove Theorems \ref{T;A} and \ref{T;C} respectively. In the final section, we use Brou\'{e} correspondence to study some trivial source $\F \sym{n}$-modules and show Theorem \ref{T;B}.
\section{Notation and Preliminaries}
Throughout the whole paper, let $\F$ be an algebraically closed field with positive characteristic $p$. For a given finite group $G$, write $H\leq G$ $(\text{resp}.\ H<G)$ to indicate that $H$ is a subgroup (resp. a proper subgroup) of $G$. All $\F G$-modules considered in the paper are finitely generated left $\F G$-modules. Write $\uparrow$, $\downarrow$ to denote induction and restriction of modules respectively. Use $\otimes$ and $\boxtimes$ to present inner tensor product and outer tensor product of modules respectively. By abusing notation, also use $\F_G$ to denote the trivial $\F G$-module and omit the subscript if there is no confusion.
\subsection{Vertices, sources, Green correspondences and Scott modules}
The reader is assumed to be familiar with modular representation theory of finite groups. For a general background of the topic, one may refer to \cite{JAlperin1} or \cite{HNYT}.

Let $G$ be a finite group and $M$, $N$ be $\F G$-modules. Write $N\mid M$ if $N$ is isomorphic to a direct summand of $M$, i.e., $M\cong L\oplus N$ for some $\F G$-module $L$. If $N$ is indecomposable, for a decomposition of $M$ into a direct sum of indecomposable modules, the number of indecomposable direct summands that are isomorphic to $N$ is well-defined by the Krull-Schmidt Theorem and is denoted by $[M:N]$. Let $m$ be a positive integer and $N_i$ be an $\F G$-module for all $1\leq i\leq m$. Write $M\sim \sum_{i=1}^m N_i$ to indicate that there exists a filtration of $M$ such that all quotient factors are exactly isomorphic to these $N_i$'s.  The dual of $M$ is denoted by $M^*$.

Assume further that $M$ is an indecomposable $\F G$-module and $P\leq G$. Following \cite{JGreen}, say $P$ a vertex of $M$ if $P$ is a minimal (with respect to inclusion of subgroups) subgroup of $G$ subject to the condition that $M\mid ({M\downarrow_{P}}){\uparrow^G}$. All the vertices of $M$ are known to form a $G$-conjugacy class of $p$-subgroups of $G$. They are $G$-conjugate to a subgroup of a defect group of the $p$-block containing $M$. It is clear that $M$ and the indecomposable $\F G$-modules isomorphic to $M$ have same vertices. Moreover, note that $M$ and $M^*$ have same vertices by the definition. Let $P$ be a vertex of $M$. There exists some indecomposable $\F P$-module $S$ such that $M\mid S{\uparrow^G}$. It is called a $P$-source of $M$. All the $P$-sources of $M$ are $N_{G}(P)$-conjugate to each other, where $N_{G}(P)$ is the normalizer of $P$ in $G$. Call $M$ a trivial source module if all the $P$-sources of $M$ are trivial modules. Note that $M$ is a trivial source module if and only if $M^*$ is a trivial source module.

Let $N_G(P)\leq H\leq G$. The Green correspondent of $M$ with respect to $H$, denoted by $\mathcal{G}_{H}(M)$, is an indecomposable $\F H$-module with a vertex $P$ such that $\mathcal{G}_H(M)\mid M{\downarrow_H}$. It is well-known that $M{\downarrow_H}\cong \mathcal{G}_H(M)\oplus U$ and $\mathcal{G}_H(M){\uparrow^G}\cong M\oplus V$ where $U$ and $V$ are $\F H$-module and $\F G$-module respectively. Furthermore, for any indecomposable direct summand $W$ of $U$ or $V$, recall that $P$ is not a vertex of $W$. For more properties of $\mathcal{G}_H(M)$, one can refer to \cite[\S 4.4]{HNYT}.

Let $K\leq G$. For the transitive permutation module $\F_K{\uparrow^G}$, a well-known fact says that it has a unique trivial submodule and a unique trivial quotient module. Moreover, there exists an indecomposable direct summand of $\F_K{\uparrow^G}$ with a trivial submodule and a trivial quotient module. The module, unique up to isomorphism, is called the Scott module of $\F_K{\uparrow^G}$ and is denoted by $Sc_G(K)$. The vertices of $Sc_G(K)$ are $G$-conjugate to the Sylow $p$-subgroups of $K$ and it is a trivial source module.
For more properties of $Sc_G(K)$, one may refer to \cite{Burry}.
\subsection{Generic Jordan types and Brauer constructions of modules}
We shall introduce some tools used in the paper. Let $C_p=\langle g\rangle$ be a cyclic group of order $p$ and $M$ be an $\F C_p$-module. According to representation theory of cyclic groups, the matrix representations of $g$ on all indecomposable $\F C_p$-modules are exactly the Jordan blocks with all eigenvalues $1$ and sizes between $1$ to $p$. So $M$ can be viewed as a direct sum of the Jordan blocks. We call the direct sum of the Jordan blocks the Jordan type of $M$ and denote it by $[1]^{n_1}\ldots [p]^{n_p}$, where $[i]$ denotes a Jordan block of size $i$ and $n_i$ means the number of Jordan blocks of size $i$ in the sum.

Let $E$ be an elementary abelian $p$-group of order $p^n$ with a generator set $\{g_1,\ldots,g_n\}$ and $M$ be an $\F E$-module. Let $\K$ denote the extension field $\F(\alpha_1,\ldots,\alpha_n)$ over $\F$ where $\{\alpha_1,\ldots,\alpha_n\}$ is a set of indeterminates. Write $u_{\alpha}$ to denote the element $1+\sum_{i=1}^n\alpha_i(g_i-1)$ of $\K E$. Note that $\langle u_\alpha\rangle$ is a cyclic group of order $p$ and $M$ can be viewed as a $\K \langle u_\alpha\rangle$-module naturally. So the Jordan type of $M{\downarrow_{\langle u_\alpha\rangle}}$ is meaningful and is called generic Jordan type of $M$. Wheeler in \cite{WW} showed that generic Jordan type of $M$ is independent of the choices of generators of $E$. The stable generic Jordan type of $M$ is the generic Jordan type of $M$ modulo free direct summands.
Notice that the modules isomorphic to $M$ as $\F E$-modules and $M$ have same generic Jordan type. For more properties of generic Jordan types of modules, one may refer to \cite{EFJPAS} and \cite{WW}.

Let $G$ be a finite group and $M$ be an $\F G$-module. Say $M$ a $p$-permutation module if, for every $p$-subgroup $P$ of $G$, there exists a basis $\B_P$ of $M$ depending on $P$ such that $\B_P$ is permuted by $P$. Note that the class of $p$-permutation modules is closed by taking direct sums and non-zero direct summands. It is well-known that indecomposable $p$-permutation $\F G$-modules are exactly trivial source $\F G$-modules. Therefore, a $p$-permutation $\F G$-module is also equivalently defined to be a non-zero direct summand of a permutation $\F G$-module. To detect $p$-permutation $\F G$-modules, the following direct corollary of \cite[Lemma 3.1]{JLW} is helpful.
\begin{lem}\label{T;permutaiton}
Let $E$ be an elementary abelian $p$-subgroup of a finite group $G$. Let $M$ be a $p$-permutation $\F G$-module. Then $M{\downarrow_E}$ has stable generic Jordan type $[1]^m$ for some non-negative integer $m$.
\end{lem}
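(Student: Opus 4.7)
My plan is to reduce the question to a direct sum of transitive permutation modules over $E$ and then compute the generic Jordan type of each summand. Since $M$ is a $p$-permutation $\F G$-module, it is a direct summand of a permutation $\F G$-module, hence $M{\downarrow_E}$ is a direct summand of a permutation $\F E$-module and is itself a $p$-permutation $\F E$-module. Consequently it decomposes as
$$M{\downarrow_E}\;\cong\;\bigoplus_i \F_{H_i}{\uparrow^E}$$
for some subgroups $H_i\leq E$, and it suffices to analyse the generic Jordan type of each summand $\F_H{\uparrow^E}$.

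Next, I would split into two cases according to whether $H=E$ or $H\lneq E$. When $H=E$, the summand is the trivial module and contributes exactly one Jordan block $[1]$. When $H\lneq E$, the $E$-action on $\F_H{\uparrow^E}$ factors through the non-trivial elementary abelian $p$-group $E/H$, and the module identifies with the regular representation of $E/H$. Passing to $\K$, the image $\bar u_\alpha$ of $u_\alpha$ in $\K[E/H]$ equals $1+\sum_{g_i\notin H}\alpha_i(\bar g_i-1)$, whose non-zero coefficients remain algebraically independent indeterminates over $\F$. The key claim is then that $\K[E/H]$ is a free module over $\K\langle\bar u_\alpha\rangle$: this forces the generic Jordan type of $\F_H{\uparrow^E}$ to consist entirely of free blocks $[p]$, of total count $|E/H|/p$.

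Putting the two cases together, the generic Jordan type of $M{\downarrow_E}$ has the shape $[1]^m[p]^k$, where $m$ counts the number of summands with $H_i=E$. Since the stable generic Jordan type is defined modulo free direct summands, it is exactly $[1]^m$, which is what the lemma asserts.

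The main obstacle is the freeness claim in the second case, namely that for a non-trivial elementary abelian $p$-group $F=E/H$, the regular representation $\K F$ is free over the cyclic subalgebra generated by a generic order-$p$ element. This is precisely the content of the general theory of generic shifted subgroups developed in \cite{WW} and \cite{EFJPAS}, and the cited result \cite[Lemma 3.1]{JLW} already packages this computation for permutation modules; once that input is granted, the lemma follows as a direct corollary.
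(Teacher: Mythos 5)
Your proposal is sound and, since the paper itself offers no proof (it just calls the lemma a direct corollary of \cite[Lemma 3.1]{JLW}), what you have written essentially reconstructs the argument that the cited reference packages. The reduction to transitive summands $\F_{H}{\uparrow^E}$, the observation that such a summand is the regular representation of $E/H$ inflated to $E$, and the case split on $H=E$ versus $H<E$ are all the right moves.

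One sentence deserves tightening. You justify the freeness claim by saying that the surviving coefficients in $\bar u_\alpha = 1+\sum_{g_i\notin H}\alpha_i(\bar g_i-1)$ ``remain algebraically independent indeterminates over $\F$.'' That is true but is not, by itself, the relevant condition: if the images $\bar g_i$ (for $g_i\notin H$) are linearly dependent in $E/H$, then $\bar u_\alpha$ is not a ``standard'' shifted unit with respect to any basis of $E/H$, and algebraic independence of the $\alpha_i$ does not immediately say that $\bar u_\alpha-1$ has nonzero image in $J/J^2$, which is what drives freeness of the regular representation of $E/H$ over $\K\langle\bar u_\alpha\rangle$. Two clean ways to fix this: either (a) invoke Wheeler's theorem that the generic Jordan type is independent of the choice of generating set, and choose generators $g_1,\ldots,g_n$ of $E$ with $g_1,\ldots,g_k$ a basis of $H$, so that $\bar g_{k+1},\ldots,\bar g_n$ is a basis of $E/H$ and $\bar u_\alpha$ is a standard generic unit; or (b) argue directly that the class of $\bar u_\alpha-1$ in $J/J^2$ is nonzero, since its coordinates relative to a basis $\bar h_1,\ldots,\bar h_r$ of $E/H$ are $\F_p$-linear combinations $\sum_i c_{ij}\alpha_i$ with the matrix $(c_{ij})$ of rank $r$, and then apply the rank-variety/$\pi$-point criterion from \cite{EFJPAS}. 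Either way closes the gap and your conclusion $[1]^m[p]^k$, hence stable type $[1]^m$, stands.
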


We now describe the Brauer constructions of $p$-permutation modules developed by Brou\'{e} in \cite{Broue}. Let $P\leq G$ and $M^P$ be the fixed-point subspace of an $\F G$-module $M$ under the action of $P$. Note that $M^P$ is an $\F [N_G(P)/P]$-module. For any $p$-subgroup $Q$ of $P$, the relative trace map from $M^Q$ to $M^P$, denoted by $\Tr_Q^P$, is defined to be
$$ \Tr_Q^P(m)=\sum_{g\in \{P/Q\}}gm,$$
where $\{P/Q\}$ is a complete set of representatives of left cosets of $Q$ in $P$. Observe that the linear map $\Tr_Q^P$ is well-defined as it is independent of the choices of $\{P/Q\}$. Continuously, $$\Tr^P(M)=\sum_{Q<P}\Tr_Q^P(M^Q).$$
It is also obvious to see that $\Tr^P(M)$ is an $\F[N_G(P)/P]$-submodule of $M^P$. The Brauer construction of $M$ with respect to $P$, written as $M(P)$, is defined to be the $\F[N_G(P)/P]$-module $$M^P/\Tr^P(M).$$ Notice that $M(P)=0$ if $P$ is not a $p$-subgroup of $G$. For our purpose, we collect some well-known properties of Brauer constructions of modules as follows.
\begin{lem}\label{L;Brauer}
The following assertions hold.
\begin{enumerate}
\item [\em(i)] Let $G$ be a finite group and $P$ be a $p$-subgroup of $G$. Let $M$ be a $p$-permutation $\F G$-module. If there exist some $\F G$-modules $U$ and $V$ such that $M\cong U\oplus V$, then $M(P)\cong U(P)\oplus V(P)$.
\item [\em(ii)] \cite[Lemma 2.7]{GLDM} Let $G_1$, $G_2$ be finite groups and $P_1$, $P_2$ be $p$-groups such that $P_1\leq G_1$ and $P_2\leq G_2$. Let $M_i$ be a $p$-permutation $\F G_i$-module for any $1\leq i\leq 2$. Then $(M_1\boxtimes M_2)(P_1\times P_2)\cong M_1(P_1)\boxtimes M_2(P_2)$ as modules under the action of $N_{G_1\times G_2}(P_1\times P_2)/(P_1\times P_2)\cong (N_{G_1}(P_1)/P_1)\times (N_{G_2}(P_2)/P_2)$.
\end{enumerate}
\end{lem}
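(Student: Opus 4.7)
The plan is to prove (i) directly from the definition of the Brauer construction, while (ii) is cited to \cite[Lemma 2.7]{GLDM} and so requires no new argument. The content of (i) is essentially that the Brauer construction, viewed as a functor from $\F G$-modules to $\F[N_G(P)/P]$-modules, is additive; in particular the $p$-permutation hypothesis is not actually needed for the verification, and I would remark on this in passing.

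The argument breaks into three short steps. First, fix an isomorphism $\phi\colon M\to U\oplus V$ of $\F G$-modules; since $\phi$ is $G$-equivariant it is in particular $P$-equivariant, and restricting to $P$-fixed points yields an $\F$-linear isomorphism $M^P\cong U^P\oplus V^P$ which is moreover $N_G(P)/P$-equivariant because the conjugation action of $N_G(P)$ on $P$-fixed points is transported by $\phi$. Second, for any subgroup $Q\leq P$ the relative trace $\Tr_Q^P$ is $\F$-linear and plainly respects a direct sum decomposition of its argument, since applying any fixed set of coset representatives $\{P/Q\}$ term-by-term commutes with taking direct sums; hence $\phi$ identifies $\Tr_Q^P(M^Q)$ with $\Tr_Q^P(U^Q)\oplus\Tr_Q^P(V^Q)$. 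Summing over proper $Q<P$ gives $\phi(\Tr^P(M))=\Tr^P(U)\oplus\Tr^P(V)$. Third, passing to the quotient produces the claimed isomorphism $M(P)\cong U(P)\oplus V(P)$ of $\F[N_G(P)/P]$-modules.

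There is no real obstacle in (i): the verification amounts to checking that the two operations ``take $P$-fixed points'' and ``quotient by the sum of relative trace images'' are additive functors, which they manifestly are. The substantive input of the lemma lies in (ii), whose proof in \cite{GLDM} must identify $(M_1\boxtimes M_2)^{P_1\times P_2}$ with $M_1^{P_1}\otimes M_2^{P_2}$ and then show that relative traces from proper subgroups of $P_1\times P_2$ contribute precisely the submodule $\Tr^{P_1}(M_1)\otimes M_2^{P_2}+M_1^{P_1}\otimes\Tr^{P_2}(M_2)$, using the fact that a proper subgroup $R<P_1\times P_2$ has at least one of its projections to $P_1$, $P_2$ proper. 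I would simply invoke the cited result rather than reproduce this analysis.
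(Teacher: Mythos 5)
Your proof of (i) is correct, and the paper itself gives no proof of this lemma (it is stated as a collection of well-known facts), so there is nothing to compare against beyond the standard argument you reconstructed. Your observation that the $p$-permutation hypothesis plays no role in (i) is accurate: the Brauer construction $M\mapsto M(P)$ is manifestly an additive functor from $\F G$-modules to $\F[N_G(P)/P]$-modules, since both passing to $P$-fixed points and quotienting by $\Tr^P(-)$ commute with direct sums, exactly as you argue. The $p$-permutation assumption is carried in the statement only because that is the setting in which the Brauer construction is subsequently used (Theorem~\ref{T;Broue}), not because additivity requires it. Your treatment of (ii) as a citation is also consistent with the paper.
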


The main result of this subsection, known as Brou\'{e} correspondence, is summarized as follows.
\begin{thm}\cite[Theorems 3.2 and 3.4]{Broue}\label{T;Broue}
Let $P$ be a $p$-subgroup of a finite group $G$ and $M$ be a trivial source $\F G$-module with a vertex $P$. Let $L$ be a $p$-permutation $\F G$-module.
\begin{enumerate}
\item [\em(i)] The correspondence $M\mapsto M(P)$ is a bijection between the isomorphism classes of trivial source $\F G$-modules with a vertex $P$ and the isomorphism classes of indecomposable projective $\F[\N_G(P)/P]$-modules.
\item [\em(ii)] The inflation of $M(P)$ from $N_G(P)/P$ to $N_G(P)$ is isomorphic to
    $\mathcal{G}_{N_G(P)}(M)$.
\item [\em(iii)] Let $N$ be a trivial source $\F G$-module with a vertex $Q$. Then $N\mid L$ if and only if $N(Q)\mid L(Q)$. We also have $[L:N]=[L(Q):N(Q)]$.
\end{enumerate}
\end{thm}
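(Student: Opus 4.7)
The plan is to extract all three parts from one explicit model of $M(P)$ for a $p$-permutation module $M$, combined with the additivity of the Brauer construction asserted in Lemma \ref{L;Brauer}(i). First I would fix a basis $\mathcal{B}_P$ of $M$ that is permuted by $P$. Then $M^P$ is spanned by the $P$-orbit sums on $\mathcal{B}_P$, and a direct check shows $\Tr^P(M)$ equals the span of the orbit sums of orbits of size strictly greater than $1$: for $b\in\mathcal{B}_P$ with $P$-stabilizer $R\lneq P$, the relative trace $\Tr_R^P(b)$ equals the $P$-orbit sum of $b$, whereas for $b\in\mathcal{B}_P^P$ one has $\Tr_Q^P(b)=[P:Q]b=0$ in characteristic $p$ whenever $Q\lneq P$. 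Hence $M(P)$ acquires a canonical basis indexed by $\mathcal{B}_P^P$ and becomes a $p$-permutation $\F[N_G(P)/P]$-module. The same explicit model yields the key vanishing: if $V$ is an indecomposable $p$-permutation $\F H$-module with vertex $Q$ not containing $P$ up to $H$-conjugacy, then $V\mid\F_Q\uparrow^H$; applying the basis description to the induced module shows $(\F_Q\uparrow^H)(P)=\F[(H/Q)^P]=0$, hence $V(P)=0$ by Lemma \ref{L;Brauer}(i).

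Second, I would prove (ii) and simultaneously deduce the projectivity needed for (i). Let $M$ be an indecomposable trivial source $\F G$-module with vertex $P$ and decompose $M\downarrow_{N_G(P)}\cong\mathcal{G}_{N_G(P)}(M)\oplus U$ as in the excerpt, so that every indecomposable summand of $U$ has vertex strictly inside $P$. Applying the additive functor $(-)(P)$ and invoking the vanishing from Step 1 on $U$ yields $M(P)\cong\mathcal{G}_{N_G(P)}(M)(P)$. Since $P\trianglelefteq N_G(P)$, any indecomposable trivial source $\F N_G(P)$-module with vertex $P$ is a direct summand of $\F_P\uparrow^{N_G(P)}$, which is the inflation of the regular $\F[N_G(P)/P]$-module from $N_G(P)/P$; consequently $\mathcal{G}_{N_G(P)}(M)$ is itself the inflation of some indecomposable projective $\F[N_G(P)/P]$-module, say $W$. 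Because the Brauer construction at $P$ of any module inflated from $N_G(P)/P$ equals the module itself, one gets $M(P)\cong W$; in particular $M(P)$ is indecomposable projective and $\mathcal{G}_{N_G(P)}(M)$ is the inflation of $M(P)$, proving (ii).

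Finally, for (iii) and the bijection in (i), decompose a $p$-permutation $\F G$-module $L\cong\bigoplus_i N_i^{\oplus a_i}$ into indecomposable trivial source summands of vertices $Q_i$. Additivity gives $L(Q)\cong\bigoplus_i N_i(Q)^{\oplus a_i}$; the vanishing from Step 1 kills the summands with $Q_i$ not containing $Q$ up to $G$-conjugacy, and Step 2 identifies each $N_i(Q)$ (when $Q_i$ is conjugate to $Q$) with an indecomposable projective $\F[N_G(Q)/Q]$-module determined by $N_i$. Comparing Krull--Schmidt multiplicities of the indecomposable projective $N(Q)$ in $L(Q)$ therefore yields $[L:N]=[L(Q):N(Q)]$. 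Injectivity of $M\mapsto M(P)$ in (i) follows because $M(P)$ determines $\mathcal{G}_{N_G(P)}(M)$ via inflation by (ii), and hence $M$ via the Green correspondence; surjectivity comes by starting with any indecomposable projective $W$ of $\F[N_G(P)/P]$, inflating to $N_G(P)$, and lifting through the inverse Green correspondence to an indecomposable trivial source $\F G$-module whose Brauer construction at $P$ is $W$ by (ii). The principal obstacle is Step 2: the vanishing $U(P)=0$ requires careful bookkeeping of the permutation bases after restriction to $N_G(P)$, and identifying $\mathcal{G}_{N_G(P)}(M)$ with an inflation uses the normality of $P$ in $N_G(P)$ in an essential way; once these are in place, parts (i) and (iii) follow formally from the additivity of the Brauer construction.
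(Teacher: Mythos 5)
The paper states this result as a citation to Brou\'e's original work and gives no internal proof, so there is nothing in the paper to compare against; I will evaluate your reconstruction on its own terms. Steps 1 and 2 are sound: the permutation-basis model of $M(P)$, the vanishing $(\F_Q\uparrow^H)(P)=\F[(H/Q)^P]$, the use of the Green decomposition $M\downarrow_{N_G(P)}\cong\mathcal{G}_{N_G(P)}(M)\oplus U$ with $U(P)=0$ (the vertices of the summands of $U$ lie in subgroups of the form $xPx^{-1}\cap N_G(P)$ with $x\notin N_G(P)$, and none of these can contain a conjugate of $P$), and the observation that, $P$ being normal in $N_G(P)$, a trivial source module with vertex $P$ is inflated from $N_G(P)/P$ and is a summand of the inflated regular module, hence projective after Brauer construction. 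This gives (ii), the well-definedness of the map in (i), and its injectivity and surjectivity.

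The gap is in Step 3. You decompose $L\cong\bigoplus_i N_i^{\oplus a_i}$, kill the summands whose vertices $Q_i$ do not contain $Q$ up to conjugacy, and identify $N_i(Q)$ with an indecomposable projective when $Q_i$ is conjugate to $Q$. But you never treat the summands $N_i$ whose vertices $Q_i$ \emph{properly} contain $Q$ up to conjugacy. For those, $N_i(Q)\neq 0$, and you must rule out that the indecomposable projective $N(Q)$ occurs as a direct summand of $N_i(Q)$; otherwise the count $[L(Q):N(Q)]$ could exceed $[L:N]$. This is not automatic: a permutation module $\F[H/K]$ can have projective summands even when $K$ is a non-trivial $p$-group (e.g.\ $\F[\sym{3}/C_2]$ in characteristic $2$ contains the projective $D^{(2,1)}$), so knowing that $N_i(Q)$ is a summand of $\F[(G/Q_i)^Q]$ with stabilizers having non-trivial $p$-part does not immediately exclude projective summands. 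The standard ways to close this are either (a) to prove that every indecomposable summand of $N_i(Q)$ has non-trivial vertex when $Q\subsetneq_G Q_i$ (this is part of Brou\'e's Theorem~3.4 and requires its own argument), or (b) to abandon the purely additive bookkeeping and instead use Brou\'e's surjectivity of the Brauer homomorphism on $\mathrm{Hom}_{G}(N,L)\to\mathrm{Hom}_{N_G(Q)/Q}(N(Q),L(Q))$ to get the equivalence $N\mid L\iff N(Q)\mid L(Q)$ directly, after which the multiplicity identity follows by peeling off copies of $N$ from $L$. As written, your Step 3 asserts the conclusion without either ingredient, so part (iii) is not yet proved.
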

\subsection{Combinatorics}
Let $\mathbb{N}$ be the set of natural numbers. Let $n\in \mathbb{N}$ and $\mathbf{n}$ be the set $\{1,\ldots,n\}$. Let $\sym{n}$ be the symmetric group acting on $\mathbf{n}$. Set $\sym{0}$ to be the trivial group. A partition of a positive integer $m$ is a non-increasing sequence of positive integers $(\lambda_1,\ldots,\lambda_{\ell})$ such that $\sum_{i=1}^\ell\lambda_i=m$. By abusing notation, as the empty set, the unique partition of $0$ is denoted by $\varnothing$. Let $\lambda=(\lambda_1,\ldots,\lambda_\ell)$ be a partition. As usual, define $|\lambda|$ to be $\sum_{i=1}^\ell\lambda_i$ and write $\lambda\vdash |\lambda|$ to indicate that $\lambda$ is a partition of $|\lambda|$. Assume further that $\lambda\vdash n$. Say $\lambda$ a two-part partition if $\ell\leq 2$. By the exponential expression of partitions, $\lambda$ is called a hook partition if $\lambda=(n-r,1^r)$ for some integer $r$ such that $0\leq r<n$. We will use exponential expression of partitions throughout the whole paper. The Young diagram of $\lambda$, denoted by $[\lambda]$, is the set
$$ \{(i,j)\in \mathbb{N}^2:\ 1\leq i\leq \ell,\ 1\leq j\leq \lambda_i\}.$$
Each element of $[\lambda]$ is called a node of $[\lambda]$. For any node $(i,j)$ of $[\lambda]$, the hook of the node $(i,j)$ is the set $\{(i,k)\in [\lambda]:\ k\geq j\}\cup\{(k,j)\in [\lambda]:\ k>i\}$. Denote the set by $H_{i,j}^\lambda$ and put $h_{i,j}^\lambda=|H_{i,j}^\lambda|$. We will not distinguish between $\lambda$ and $[\lambda]$ in the paper.

The $p$-core $\kappa_p(\lambda)$ of $\lambda$ is the partition whose Young diagram is constructed by removing all rim $p$-hooks from $\lambda$ successively. Due to the Nakayama Conjecture, the $p$-cores of partitions of $n$ label the $p$-blocks of $\F \sym{n}$. We thus write $B_{\kappa_p(\lambda)}$ to denote the $p$-block of $\F\sym{n}$ labelled by $\kappa_p(\lambda)$. The number of rim $p$-hooks removed from $\lambda$ to obtain $\kappa_p(\lambda)$ is called the $p$-weight of $\lambda$ and is denoted by $w_p(\lambda)$. The $p$-weight of $B_{\kappa_p(\lambda)}$ is defined to be $w_p(\lambda)$. One can choose a defect group of $B_{\kappa_p(\lambda)}$ to be a Sylow $p$-subgroup of $\sym{pw_p(\lambda)}$. We say that $\lambda$ is $p$-restricted if $\lambda_\ell<p$ and $\lambda_i-\lambda_{i+1}<p$ for all $1\leq i<\ell$. Take $\lambda'$ to be the conjugate of $\lambda$ and say $\lambda$ a $p$-regular partition if $\lambda'$ is $p$-restricted. The $p$-adic expansion of $\lambda$ is the unique sum $\sum_{i=0}^mp^i\lambda(i)$ for some $m\in\mathbb{N}\cup\{0\}$, where $\lambda(i)$ is $p$-restricted or $\varnothing$ for all $0\leq i\leq m$, $\lambda(m)\neq \varnothing$ and $\lambda=\sum_{i=0}^mp^i\lambda(i)$ via component-wise addition and scalar multiplication of partitions.
\subsection{Modules of symmetric groups}
We now briefly present some material of representation theory of symmetric groups needed in the paper. One can refer to \cite{GJ1} or \cite{GJ3} for a background of the topic. Given the non-negative integers $m_1,\ldots, m_\ell$ such that $\sum_{i=1}^{\ell}m_i\leq n$, define $\sym{(m_1,\ldots, m_\ell)}=\sym{m_1}\times\cdots\times\sym{m_\ell}\leq\sym{n}$, where $\sym{m_1}$ acts on the set $\{1,\ldots, m_1\}$ (if $m_1\neq 0$), $\sym{m_2}$ acts on the set $\{m_1+1,\ldots, m_1+m_2\}$ (if $m_2\neq 0$) and so on. Let $\lambda\vdash n$. The Young subgroup associated with $\lambda$ of $\sym{n}$ is defined to be $\sym{\lambda}$. Throughout the paper, for a given integer $m$ with $1\leq m\leq n$, view $\sym{m}$ and $\sym{n-m}$ as corresponding components of $\sym{(m,n-m)}$. So they commute with each other. Use $sgn(n)$ to denote the sign module of $\sym{n}$ and omit the parameter if the symmetric group is clear. If $p=2$, $sgn$ means the corresponding trivial module.

To describe modules of symmetric groups, we assume that the reader is familiar with the definitions of tableau, tabloid and polytabloid (see \cite{GJ1} or \cite{GJ3}). The Young permutation module with respect to $\lambda$, denoted by $M^{\lambda}$, is the $\F$-space generated by all $\lambda$-tabloids where $\sym{n}$ permutes these tabloids. Notice that $M^\lambda$ is a $p$-permutation module since it is a permutation module.

The Specht module $S^\lambda$ is an $\F\sym{n}$-submodule of $M^{\lambda}$ generated by all $\lambda$-polytabloids. The dual of $S^\lambda$ is denoted by $S_\lambda$. It is well-known that $S^{\lambda}$ is indecomposable if $p>2$ or $p=2$ and $\lambda$ is $2$-regular (or $2$-restricted). The relation of $S^{\lambda}$ and $S_{\lambda'}$ is given by $S^\lambda\otimes sgn\cong S_{\lambda'}$. In particular, if $p=2$, $S^\lambda\cong S^{\lambda'}$ if $S^\lambda$ is self-dual. Unlike the characteristic zero case, $\{S^\lambda:\ \lambda\vdash n\}$ is usually not the set of all simple modules of $\sym{n}$ up to isomorphism. However, when $\lambda$ is $p$-regular, James in \cite[Theorem 11.5]{GJ1} showed that $S^\lambda$ has a simple head $D^\lambda$. Moreover, the set $D_{n,p}=\{D^{\mu}:\ \mu\ \text{is}\ \text{a}\ \text{$p$-regular}\ \text{partition}\ \text{of}\ n\}$
forms a complete set of representatives of isomorphic classes of simple $\F\sym{n}$-modules. It is well-known that every module in $D_{n,p}$ is self-dual. Following the notation used in \cite[Section 4]{DanzLim}, we denote the set $\{\lambda\vdash n:\ S^\lambda\ \text{is}\ \text{simple}\}$ by $JM(n)_p$.
A Specht filtration of an $\F\sym{n}$-module is a filtration of the module with all the successive quotient factors isomorphic to some Specht modules.

For a given decomposition of $M^\lambda$ into a direct sum of indecomposable modules, there exists an indecomposable direct summand of $M^\lambda$ that contains $S^\lambda$. The module is unique up to isomorphism and is denoted by $Y^\lambda$. Let $\mu\vdash n$ and $\unlhd$ denote the dominance order on the set of all partitions of $n$. James in \cite[Theorem 3.1]{GJ2} showed that $[M^\lambda:Y^\lambda]=1$ and $[M^{\lambda}:Y^{\mu}]\neq 0$ only if $\lambda\unlhd \mu$. One thus gets
\begin{equation*}
M^\lambda\cong Y^\lambda\oplus\bigoplus_{\mu\rhd\lambda}k_{\lambda,\mu}Y^\mu,
\end{equation*}
where $k_{\lambda,\mu}=[M^\lambda:Y^\mu]$ and all $k_{\lambda,\mu}$'s are known as $p$-Kostka numbers. Note that Young modules are self-dual and are $p$-permutation modules with trivial sources.
The Brauer constructions of Young modules with respect to their vertices are studied by \cite{Erdmann} and \cite{GJ}. To describe them, let $\lambda$ have $p$-adic expansion $\sum_{i=0}^mp^i\lambda(i)$ for some non-negative integer $m$, where $|\lambda(i)|=n_i$ for all $0\leq i\leq m$. Denote by $\O_\lambda$ the partition
$$(\underbrace{p^m,\ldots,p^m}_{n_m\ \text{times}},\ \underbrace{p^{m-1},\ldots,p^{m-1}}_{n_{m-1}\ \text{times}},
\ldots,\ \underbrace{1,\ldots,1}_{n_0\ \text{times}}).$$
\begin{thm}\label{T; Youngvertices}\cite{Erdmann, GJ}
Let $\lambda\vdash n$. Then the vertices of $Y^{\lambda}$ are $\sym{n}$-conjugate to the Sylow $p$-subgroups of $\mathfrak{S}_{\O_\lambda}$. Moreover, $Y^\lambda$ is projective if and only if $\lambda$ is $p$-restricted.
\end{thm}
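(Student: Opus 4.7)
The projectivity statement is a direct corollary of the vertex statement: $\lambda$ is $p$-restricted iff $\lambda=\lambda(0)$ in its $p$-adic expansion iff $\O_\lambda=(1^n)$ iff $\sym{\O_\lambda}$ is trivial, so the vertex is trivial iff $Y^\lambda$ is projective iff $\lambda$ is $p$-restricted. Accordingly, I would put all the work into the vertex computation and prove two containments, up to $\sym{n}$-conjugacy, with $P$ a Sylow $p$-subgroup of $\sym{\O_\lambda}$: every vertex of $Y^\lambda$ lies inside $P$, and $P$ lies inside some vertex of $Y^\lambda$.

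For the first containment, I would exhibit $Y^\lambda$ as a direct summand of $M^{\O_\lambda}=\F\uparrow_{\sym{\O_\lambda}}^{\sym{n}}$, which immediately forces any vertex into a conjugate of $P$ since every summand of an induced permutation module has a vertex contained in a Sylow $p$-subgroup of the inducing subgroup. By James's decomposition $M^{\O_\lambda}\cong\bigoplus_\nu k_{\O_\lambda,\nu}Y^\nu$, where $k_{\O_\lambda,\nu}\neq 0$ only if $\O_\lambda\unlhd\nu$, the claim reduces to verifying (a) the dominance $\O_\lambda\unlhd\lambda$, a partial-sum check from the $p$-adic expansion $\lambda=\sum p^i\lambda(i)$, and (b) the positivity $k_{\O_\lambda,\lambda}\geq 1$, which I would derive by exhibiting an explicit semistandard $\O_\lambda$-tableau of shape $\lambda$ built from the $p$-adic digits.

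For the second containment, I would use the Brauer construction. Starting from $M^\lambda\cong\F[\sym{n}/\sym{\lambda}]$ and the standard identification of Brauer quotients of transitive permutation modules as $\F$-spans of fixed points, I would exhibit a $P$-fixed $\lambda$-tabloid (for instance, by filling the rows of $\lambda$ with $P$-orbits in a canonical way) to show $M^\lambda(P)\neq 0$. By Lemma~\ref{L;Brauer}(i) this nonvanishing distributes over the decomposition $M^\lambda\cong Y^\lambda\oplus\bigoplus_{\mu\rhd\lambda}k_{\lambda,\mu}Y^\mu$. Proceeding by induction on $|\lambda|$ and on the dominance order, and combining with Theorem~\ref{T;Broue}(i), (iii), I would conclude $Y^\lambda(P)\neq 0$, so that $P$ is contained in some vertex of $Y^\lambda$.

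The main obstacle is precisely this final induction: for $\mu\rhd\lambda$ it can happen that $\sym{\O_\mu}$ has a Sylow $p$-subgroup strictly containing a conjugate of $P$, so the naive claim $Y^\mu(P)=0$ fails. Handling this demands a careful analysis of how the $p$-adic expansion evolves under dominance moves $\lambda\to\mu$ and precise control of the Brauer images of each $Y^\mu$ as $N_{\sym{n}}(P)/P$-modules. This combinatorial and $p$-local bookkeeping, tying $\O_\mu$ to the trivial-source structure of each $Y^\mu$, is the heart of the Erdmann and Grabmeier--Jentsch argument and the step where the clean closed-form answer is genuinely earned.
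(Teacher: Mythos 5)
This theorem is stated in the paper as a cited result from Erdmann and Grabmeier (\cite{Erdmann, GJ}) and is not proved in the paper, so there is no in-paper argument to compare your sketch against. I will instead evaluate your outline on its own merits.

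Your reduction of the projectivity claim to the vertex claim is correct, and the strategy of proving two containments via $Y^\lambda \mid M^{\O_\lambda}$ and the Brauer construction is the right general shape. However, beyond the gap you yourself flag in the second containment, there is a genuine gap in the first. You propose to establish $k_{\O_\lambda,\lambda}\geq 1$ by exhibiting a semistandard $\O_\lambda$-tableau of shape $\lambda$. But the number of such tableaux is the \emph{ordinary} Kostka number $K_{\lambda,\O_\lambda}$, which bounds the $p$-Kostka number $k_{\O_\lambda,\lambda}$ from \emph{above}, not below: one has $K_{\lambda,\mu}=\sum_{\nu}k_{\mu,\nu}\,[Y^\nu: S^\lambda]$ in a Specht-filtration count, so $K_{\lambda,\O_\lambda}\geq k_{\O_\lambda,\lambda}$, and the inequality can be strict. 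Exhibiting a semistandard tableau therefore says nothing about positivity of the $p$-Kostka number. Establishing $Y^\lambda\mid M^{\O_\lambda}$ is already a nontrivial $p$-modular fact; the standard routes go through the Klyachko/Donkin multiplicity formula for $p$-Kostka numbers (which is engineered so that $k_{\O_\lambda,\lambda}=1$), or through Erdmann's Schur-algebra argument. The tableau argument alone does not suffice.

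As for the second containment, you correctly identify that the inductive step, showing $M^\lambda(P)\neq 0$ for a Sylow $p$-subgroup $P$ of $\sym{\O_\lambda}$ but controlling the contributions of the $Y^\mu$ with $\mu\rhd\lambda$ in the decomposition of $M^\lambda$, is where the real work lies, and you are honest that your sketch does not carry it out. A complete proof would need either Erdmann's computation of $Y^\mu(P)$ as $\F[N_{\sym{n}}(P)/P]$-modules (the content of the companion Theorem~\ref{Erdmann} in this paper) or Grabmeier's original trivial-source/Schur-algebra analysis. So the proposal is a correct high-level blueprint, but both key steps, the positivity of the $p$-Kostka number and the Brauer-quotient induction, are stated rather than proved, and the justification offered for the first is actually mathematically insufficient as written.
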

\begin{thm}\cite{Erdmann}\label{Erdmann}
Let $\lambda\vdash n$ and $\lambda$ have $p$-adic expansion $\sum_{i=0}^mp^i\lambda(i)$ where $|\lambda(i)|=n_i$ for all $0\leq i\leq m$. Let $\alpha=(n_0,\ldots,n_m)$ and $P_{\lambda}$ be a Sylow $p$-subgroup of $\sym{\O_{\lambda}}$.
\begin{enumerate}
\item [\em(i)] We have $\sym{\alpha}\cong \N_{\sym{n}}(P_\lambda)/\N_{\sym{\O_\lambda}}(P_\lambda)\cong (\N_{\sym{n}}(P_\lambda)/P_\lambda)/(\N_{\sym{\O_\lambda}}(P_\lambda)/P_\lambda).$
\item [\em(ii)] We have that $\N_{\sym{\O_\lambda}}(P_\lambda)/P_\lambda$ acts trivially on $Y^\lambda(P_\lambda)$. Therefore, $Y^\lambda(P_\lambda)$ can be viewed as an $\F\sym{\alpha}$-module.
\item [\em(iii)] We have $Y^{\lambda}(P_\lambda)\cong Y^{\lambda(0)}\boxtimes
Y^{\lambda(1)}\boxtimes\cdots\boxtimes Y^{\lambda(m)}$ as $\F \sym{\alpha}$-modules.
\end{enumerate}
\end{thm}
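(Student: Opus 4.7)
The plan is to exploit the product structure coming from the $p$-adic expansion of $\lambda$. Choose $P_\lambda$ as a block-diagonal product $\prod_{i=0}^m P_i^{\times n_i}$, where each $P_i$ is a fixed Sylow $p$-subgroup of $\sym{p^i}$ embedded in each of the $n_i$ components of type $\sym{p^i}$ making up $\sym{\O_\lambda}$. The $P_\lambda$-orbits on $\mathbf{n}$ are then precisely the blocks of $\O_\lambda$, since each $P_i$ acts transitively on its component of size $p^i$.

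For part (i), compute $\N_{\sym{n}}(P_\lambda)$ in two stages. Within $\sym{\O_\lambda}$ the normalizer factors componentwise as $\prod_i \N_{\sym{p^i}}(P_i)^{\times n_i}$, while inside $\sym{n}$ one may additionally permute the $n_i$ conjugate copies of $P_i$ for each $i$, so $\N_{\sym{n}}(P_\lambda) = \prod_i \bigl(\N_{\sym{p^i}}(P_i) \wr \sym{n_i}\bigr)$. Taking the quotient by $\N_{\sym{\O_\lambda}}(P_\lambda)$ leaves $\prod_i \sym{n_i} = \sym{\alpha}$, and the second isomorphism in (i) follows from the third isomorphism theorem applied to the chain $P_\lambda \leq \N_{\sym{\O_\lambda}}(P_\lambda) \leq \N_{\sym{n}}(P_\lambda)$.

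For part (ii), work on the tabloid basis of $M^\lambda$. Any $P_\lambda$-fixed tabloid has each of its rows equal to a union of $P_\lambda$-orbits, i.e., a union of blocks of $\O_\lambda$. Any $g \in \N_{\sym{\O_\lambda}}(P_\lambda) \leq \sym{\O_\lambda}$ stabilizes each such block setwise, hence stabilizes each row setwise, and therefore fixes the tabloid. Thus $\N_{\sym{\O_\lambda}}(P_\lambda)$ acts trivially on $(M^\lambda)^{P_\lambda}$, and the induced action on $M^\lambda(P_\lambda)$ is trivial. Since $Y^\lambda \mid M^\lambda$, Lemma~\ref{L;Brauer}(i) transfers the trivial action to $Y^\lambda(P_\lambda)$.

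For part (iii), view $Y^\lambda(P_\lambda)$ as an $\F\sym{\alpha}$-module via (ii). Each $\lambda(i)$ is $p$-restricted, so by Theorem~\ref{T; Youngvertices} each $Y^{\lambda(i)}$ is a projective $\F\sym{n_i}$-module, and the proposed tensor product is a projective $\F\sym{\alpha}$-module; by Theorem~\ref{T;Broue}(i), $Y^\lambda(P_\lambda)$ is also projective. To match them, compute $M^\lambda(P_\lambda)$ by combining Lemma~\ref{L;Brauer}(ii) with a decomposition of $M^\lambda$ along the $p$-adic expansion of $\lambda$, reducing to the single-$p$-adic-digit case, and identify $Y^\lambda(P_\lambda)$ as the indecomposable summand containing the image of the polytabloid generator of $S^\lambda$. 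The main obstacle will be this last identification: showing that the tensor product decomposes with exactly the factors $Y^{\lambda(i)}$ claimed (rather than a sum over dominating Young modules) requires induction on $m$ together with a dimension count derived from the $p$-Kostka decomposition of $M^\lambda$ and the uniqueness of Young modules as summands of Young permutation modules.
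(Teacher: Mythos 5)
The paper does not prove this theorem; it is stated with a citation to \cite{Erdmann}, so there is no internal proof to compare against. I will simply assess whether your argument is sound on its own terms.

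Parts (i) and (ii) are essentially right. For (i), the key observation is that the $P_\lambda$-orbits are exactly the blocks of $\O_\lambda$ and have pairwise distinct sizes $p^i$, so any element of $\N_{\sym{n}}(P_\lambda)$ (which must permute $P_\lambda$-orbits) can only permute blocks of a fixed size among themselves; this places the normalizer inside $\prod_i(\sym{p^i}\wr\sym{n_i})$, and your wreath-product computation then gives $\N_{\sym{n}}(P_\lambda)=\prod_i\bigl(\N_{\sym{p^i}}(P_i)\wr\sym{n_i}\bigr)$, which does rely on your specific choice of $P_\lambda$ built from identical copies $P_i$, a choice you do make. The quotients then fall out as claimed, with the second isomorphism in (i) being the third isomorphism theorem. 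For (ii), a $P_\lambda$-fixed tabloid has rows that are unions of $P_\lambda$-orbits, hence unions of $\O_\lambda$-blocks, and every element of $\sym{\O_\lambda}$ stabilizes each block setwise, so $\N_{\sym{\O_\lambda}}(P_\lambda)\leq\sym{\O_\lambda}$ fixes each such tabloid; since $M^\lambda(P_\lambda)$ has a basis of images of fixed tabloids and $Y^\lambda(P_\lambda)\mid M^\lambda(P_\lambda)$ by Lemma~\ref{L;Brauer}(i), the action is trivial on the summand too.

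Part (iii) is where the argument does not close, and you acknowledge as much. Both $Y^\lambda(P_\lambda)$ and $Y^{\lambda(0)}\boxtimes\cdots\boxtimes Y^{\lambda(m)}$ are indeed indecomposable projective $\F\sym{\alpha}$-modules (the former by Theorem~\ref{T;Broue}(i), the latter by Theorem~\ref{T; Youngvertices} plus the fact that an outer tensor product of projective indecomposables over an algebraically closed field is projective indecomposable), but showing they are \emph{the same} projective indecomposable is the entire content of (iii), and your sketch does not supply it. The proposal to ``identify $Y^\lambda(P_\lambda)$ as the indecomposable summand containing the image of the polytabloid generator'' is not meaningful without further work: a polytabloid need not survive the Brauer quotient, and even if it did, nothing forces it into a single indecomposable summand. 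More to the point, $M^\lambda(P_\lambda)$ decomposes as a large direct sum of modules $M^{\mu(0)}\boxtimes\cdots\boxtimes M^{\mu(m)}$, one for each tuple of compositions with $\sum_i p^i\mu(i)=\lambda$ rowwise, not only the $p$-adic tuple $(\lambda(i))_i$; locating the copy of $Y^\lambda(P_\lambda)$ among all the resulting projective summands requires a genuine dominance-order and $p$-Kostka analysis, which is precisely the hard combinatorial core of Erdmann's (and Grabmeier's) proof. ``Induction on $m$ together with a dimension count'' and ``uniqueness of Young modules as summands'' are not, by themselves, enough to carry out that identification.
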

We conclude the section by collecting some results used in the following discussion.
\begin{thm}\label{T;Hemmer}
\cite[Proposition 4.6]{DKZ} The trivial source simple $\F\sym{n}$-modules are exactly the simple $\F\sym{n}$-Specht modules.
\end{thm}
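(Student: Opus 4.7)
My plan is to prove the two directions separately. For the forward direction that a simple trivial source $\F\sym{n}$-module is a simple Specht module, suppose $D^\lambda$ is simple with a vertex $P$ and trivial source, so that $D^\lambda\mid\F_P{\uparrow^{\sym{n}}}$. By the standard lifting theorem for trivial source modules, $D^\lambda$ lifts uniquely to an $\O\sym{n}$-lattice $\widetilde{D^\lambda}$ that is a direct summand of $\O_P{\uparrow^{\sym{n}}}$, where $\O$ is a complete discrete valuation ring with residue field $\F$ and fraction field $K$ of characteristic zero. Since the irreducible $K\sym{n}$-characters are precisely the Specht characters $\{\chi^\nu:\nu\vdash n\}$, the ordinary character of $\widetilde{D^\lambda}$ decomposes as $\sum_\nu a_\nu\chi^\nu$ with $a_\nu\in\mathbb{Z}_{\geq 0}$. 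Restricting to $p$-regular classes and expanding via the decomposition matrix $(d_{\nu,\mu})$ then yields $\sum_\nu a_\nu d_{\nu,\mu}=\delta_{\mu,\lambda}$ for every $p$-regular $\mu$. Non-negativity of every summand forces a unique $\nu_0$ with $a_{\nu_0}=d_{\nu_0,\lambda}=1$ and $d_{\nu_0,\mu}=0$ for $\mu\neq\lambda$, which says exactly that $S^{\nu_0}\cong D^\lambda$.

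For the reverse direction that a simple Specht module has trivial source, the plan is to show that $S^\lambda$ is isomorphic to some Young module, after which the conclusion follows because Young modules are trivial source $p$-permutation modules. The main case is $\lambda$ $p$-regular, so that $S^\lambda=D^\lambda$ sits as a submodule inside the indecomposable summand $Y^\lambda$ of $M^\lambda$. Using a Specht filtration of $Y^\lambda$ whose bottom factor is $S^\lambda$ and whose higher factors are $S^\mu$ with $\mu\rhd\lambda$, together with James's submodule theorem (composition factors of $S^\mu$ have the form $D^\nu$ with $\nu\unrhd\mu$), I would obtain $[Y^\lambda:D^\lambda]=[S^\lambda:D^\lambda]=1$. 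By self-duality of $Y^\lambda$ (a summand of the self-dual permutation module $M^\lambda$), the inclusion $S^\lambda\hookrightarrow Y^\lambda$ dualises to a surjection $Y^\lambda\twoheadrightarrow S^\lambda$; the composition $S^\lambda\hookrightarrow Y^\lambda\twoheadrightarrow S^\lambda$ is either zero, which would contradict $[Y^\lambda:D^\lambda]=1$ by placing two distinct copies of $D^\lambda$ in the socle and head respectively, or an isomorphism, which forces $Y^\lambda=S^\lambda$ by indecomposability.

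The main obstacle is the $p$-singular case, which can occur only when $p=2$. Here $S^\lambda\cong D^\mu$ for some $2$-regular $\mu\neq\lambda$ and $S^\mu$ need not itself be simple, so the argument above does not transport directly to $\mu$; concretely $(p,\lambda)=(2,(2,2))$ gives $S^{(2,2)}\cong D^{(3,1)}\cong Y^{(2,2)}$ while $S^{(3,1)}$ is not simple, and $(p,\lambda)=(2,(1^n))$ gives $S^{(1^n)}\cong Y^{(n)}$ rather than $Y^{(1^n)}$. To handle these situations I would exploit the self-duality $S^\lambda\cong S^{\lambda'}$ valid in characteristic two (since the simple $S^\lambda$ is automatically self-dual), perform a refined analysis of the Specht filtration of $Y^\lambda$ using $p$-Kostka numbers, and use dimension counts to locate the Young module $Y^\nu$ isomorphic to $S^\lambda$.
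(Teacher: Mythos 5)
The paper does not give its own proof of this theorem; it cites \cite[Proposition 4.6]{DKZ} directly, so I can only assess your argument on its own terms.

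Your forward direction is correct. Lifting the trivial source module $D^\lambda$ to an $\O$-lattice that is a summand of a permutation $\O$-module, reading off its ordinary character $\sum_\nu a_\nu\chi^\nu$ with $a_\nu\geq 0$, and running the identity $\sum_\nu a_\nu d_{\nu,\mu}=\delta_{\mu,\lambda}$ through the non-negativity of decomposition numbers to isolate a unique $\nu_0$ with $S^{\nu_0}\cong D^\lambda$ is a clean and valid argument.

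The reverse direction, however, has a genuine gap that cannot be patched by the plan you sketch. Your strategy is to show that every simple Specht module is isomorphic to some Young module, and you assert concretely that for $p=2$ one has $S^{(2,2)}\cong D^{(3,1)}\cong Y^{(2,2)}$. This is false. By Theorem \ref{T; Youngvertices}, the vertex of $Y^{(2,2)}$ is a Sylow $2$-subgroup of $\sym{\O_{(2,2)}}=\sym{(2,2)}$, i.e.\ $C_2\times C_2=\langle(1,2),(3,4)\rangle$. On the other hand, $D^{(3,1)}$ has vertex $K_4=\langle(1,2)(3,4),(1,3)(2,4)\rangle$, as Lemma \ref{L;decompositon} records in the decomposition $\F_{K_4}{\uparrow^{\sym{4}}}\cong 2D^{(3,1)}\oplus Sc_{\sym{4}}(K_4)$. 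Since $K_4$ and $C_2\times C_2$ are not $\sym{4}$-conjugate, $D^{(3,1)}\not\cong Y^{(2,2)}$. More generally, $K_4$ is never a Sylow $p$-subgroup of a Young subgroup, so by Theorem \ref{T; Youngvertices} no Young module has vertex $K_4$, and therefore $S^{(2,2)}$ is a simple trivial source Specht module that is \emph{not} a Young module. Your proposed refinement via $p$-Kostka numbers and dimension counts to ``locate the Young module $Y^\nu$ isomorphic to $S^\lambda$'' is thus doomed in exactly the exceptional case you identified. The correct way to finish that case is to show $S^{(2,2)}$ is a direct summand of the permutation module $\F_{K_4}{\uparrow^{\sym{4}}}$, which makes it trivial source without being a Young module. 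Your $p$-regular argument (and its conjugate, $p$-restricted version via Lemma \ref{L;p-permutation modules}(iii)) is fine and does cover all of $p>2$; you should also correct the wording that ``the $p$-singular case can occur only when $p=2$,'' since for $p>2$ one routinely has simple $S^\lambda$ with $\lambda$ $p$-restricted but not $p$-regular — those cases are handled by conjugation, not by the $p$-regular argument directly.
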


\begin{lem}\label{L;p-permutation modules}
Let $\lambda\vdash n$ and $M$ be an $\F\sym{n}$-module. Then
\begin{enumerate}
\item [\em(i)]$M$ is a $p$-permutation module if and only if $M^*$ is a $p$-permutation module.
\item [\em(ii)] $M$ is a $p$-permutation module if and only if $M\otimes sgn$ is a $p$-permutation module, where $sgn$ means the $\F\sym{n}$-trivial module if $p=2$.
\item [\em(iii)] $S^\lambda$ is a $p$-permutation module if and only if $S^{{\lambda}'}$ is a $p$-permutation module.
\end{enumerate}
\end{lem}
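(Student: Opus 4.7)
The plan is to reduce (iii) to (i) and (ii) using the standard identity $S^\lambda \otimes sgn \cong S_{\lambda'} = (S^{\lambda'})^*$ recorded earlier in the excerpt. Indeed, once (i) and (ii) are in hand, $S^\lambda$ is a $p$-permutation module iff $S^\lambda \otimes sgn$ is a $p$-permutation module (by (ii)), iff $(S^{\lambda'})^*$ is a $p$-permutation module, iff $S^{\lambda'}$ is a $p$-permutation module (by (i)). So only (i) and (ii) require direct arguments, and I would deduce both from the characterization of $p$-permutation modules as direct summands of permutation modules.

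For (i), the essential input is that every transitive permutation module $\F_K{\uparrow^{\sym{n}}}$ (for $K\le \sym{n}$) is self-dual: the coset basis admits a canonical $\sym{n}$-equivariant dual basis, so any permutation module $P$ satisfies $P^* \cong P$. If $M \oplus N \cong P$ for some $\F\sym{n}$-module $N$, then dualising yields $M^* \oplus N^* \cong P^* \cong P$, and so $M^*$ is itself a direct summand of a permutation module, hence a $p$-permutation module. The converse follows by applying the same argument to $M^*$ together with $M^{**}\cong M$.

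For (ii), I would split on $p$. If $p = 2$, then $sgn$ coincides with the trivial module as an $\F\sym{n}$-module (since $-1 = 1$ in $\F$), so $M \otimes sgn \cong M$ and the statement is immediate. If $p > 2$, I claim that $sgn\downarrow_P$ is the trivial module for every $p$-subgroup $P \le \sym{n}$: every element of $P$ is a product of disjoint cycles of $p$-power length, and a cycle of length $p^k$ has sign $(-1)^{p^k - 1} = +1$ because $p^k - 1$ is even when $p$ is odd, so $P$ lies in the alternating group. Given a $P$-invariant permutation basis $\B$ of $M$ (which exists by definition of a $p$-permutation module) and any nonzero $v \in sgn$, the set $\{b \otimes v : b \in \B\}$ is a $P$-invariant basis of $M \otimes sgn$, since $g(b\otimes v) = gb \otimes v$ for every $g \in P$. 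Hence $M \otimes sgn$ is a $p$-permutation module, and the converse follows from $(M \otimes sgn)\otimes sgn \cong M$.

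No serious obstacle arises. The only step that requires a moment of thought is the sign computation on $p$-subgroups for odd $p$, which rests on the elementary parity fact that $p^k - 1$ is even when $p$ is odd; this is a genuinely characteristic-sensitive input and explains why the argument splits into the cases $p = 2$ and $p > 2$.
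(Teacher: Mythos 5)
Your proof is correct and follows essentially the same route as the paper: dualizing for (i), tensoring a $P$-invariant basis with a generator of $sgn$ for (ii), and combining both with $S^{\lambda'}\cong (S^\lambda\otimes sgn)^*$ for (iii). The one place you are more careful than the paper is in (ii), where you explicitly verify that $sgn$ restricts trivially to every $p$-subgroup (automatic for $p=2$, and because $p$-subgroups lie in $A_n$ when $p>2$), a fact the paper takes for granted when it asserts that $\{u\otimes v: u\in\B_P\}$ is permuted by $P$.
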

\begin{proof}
Note that $*$ preserves direct sum of $\F\sym{n}$-modules and takes a permutation module to a permutation module. (i) thus follows by noticing that $M\cong (M^{*})^*$.
Let $v$ be a generator of $sgn$. If $M$ is a $p$-permutation module, for any $p$-subgroup $P$ of $\sym{n}$, let $\B_P$ be a basis of $M$ which is permuted by $P$. Note that $\B_{P,v}=\{u\otimes v:\ u\in \B_P\}$ is a basis of $M\otimes sgn$ which is permuted by $P$. So $M\otimes sgn$ is a $p$-permutation module by the definition. (ii) follows by noting that $M\cong(M\otimes sgn)\otimes sgn$.
As $S^{\lambda'}\cong (S^\lambda\otimes sgn)^*$, (iii) thus follows by combining (i) and (ii). The lemma follows.
\end{proof}
\section{Fixed-point functors of symmetric groups}
The section is designed to study fixed-point functors of symmetric groups. After a short summary of these objects, we present some properties of the functors. Some results will be used in the following two sections while others may be of independent interest.

Let $m\in \mathbb{N}$, $m\leq n$ and $M$ be an $\F\sym{n}$-module. Recall that $\sym{n-m}$ centralizes $\sym{m}$. So the space $M^{\sym{m}}$ is viewed as an $\F\sym{n-m}$-module naturally. The fixed-point functor with the parameter $m$, introduced by Hemmer in \cite{Hemmer2}, is defined to be
\begin{align*}
\mathcal{F}_m: \F\sym{n}\text{-mod}\mapsto\F\sym{n-m}\text{-mod}
\end{align*}
setting $\mathcal{F}_m(M)\!=\!M^{\sym{m}}\cong
\text{Hom}_{\F\sym{m}}(\F,M{\downarrow_{\sym{m}}})\!\cong\!\text{Hom}_{\F\sym{n}}(M^{(m,1^{n-m})},M).$ It is also convenient to regard $\mathcal{F}_m(M)$ as the largest subspace of $M{\downarrow_{\sym{(m,n-m)}}}$ where $\sym{m}$ acts trivially. Note that $\mathcal{F}_1(M)=M{\downarrow_{\sym{n-1}}}$ by the definition. Moreover, the functor $\mathcal{F}_m$ is exact if $m<p$ and is not exact in general if $m\geq p$. In \cite{Hemmer1}, Hemmer obtained some results of extensions of simple modules of symmetric groups by using fixed-point functors. For any $\lambda\vdash n$, he also conjectured that
$\mathcal{F}_m(S^\lambda)$ had a Specht filtration (see \cite[Conjecture 7.3]{Hemmer2} or \cite[Conjecture 7.2]{Hemmer1}). However, the conjecture was shown to be false in \cite{DG}.

We now begin to prove some properties of fixed-point functors. Throughout the whole section, fix an integer $m$ satisfying $1\leq m\leq n$.
\begin{lem}\label{L;Directsum}
Let $M$ and $N$ be $\F\sym{n}$-modules. Then $\mathcal{F}_m(M\oplus N)=\mathcal{F}_m(M)\oplus \mathcal{F}_m(N)$.
\end{lem}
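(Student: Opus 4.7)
The statement is the standard fact that the fixed-point functor preserves finite direct sums, which is true essentially for formal reasons. The plan is to unwind the definition and observe that taking $\mathfrak{S}_m$-invariants is functorial and additive.

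First I would note that, since $\mathcal{F}_m(X)$ is defined as the subspace $X^{\mathfrak{S}_m}$ of $X$ viewed as an $\F\mathfrak{S}_{n-m}$-module (via the commuting action of $\mathfrak{S}_{n-m}$ inside $\mathfrak{S}_{(m,n-m)}$), I can work entirely at the level of $\F$-vector spaces with a $\mathfrak{S}_{(m,n-m)}$-action. The decomposition $M\oplus N$ is a decomposition of $\F\sym{n}$-modules, hence in particular of $\F\sym{(m,n-m)}$-modules, so the projections and inclusions are $\sym{m}$- and $\sym{n-m}$-equivariant.

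Next I would verify the set-theoretic identity $(M\oplus N)^{\sym{m}}=M^{\sym{m}}\oplus N^{\sym{m}}$ directly: an element $(u,v)\in M\oplus N$ is fixed by every $\sigma\in\sym{m}$ if and only if $\sigma u=u$ and $\sigma v=v$ for every such $\sigma$, because $\sym{m}$ acts componentwise. This gives the containment $(M\oplus N)^{\sym{m}}\subseteq M^{\sym{m}}\oplus N^{\sym{m}}$, and the reverse containment is obvious. Because the projections $M\oplus N\to M$ and $M\oplus N\to N$ are $\sym{n-m}$-equivariant (as $\sym{n-m}$ commutes with $\sym{m}$ and preserves each summand), the identification is in fact an isomorphism of $\F\sym{n-m}$-modules, which by construction of $\mathcal{F}_m$ is what is required.

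There is no real obstacle here; the only thing to be slightly careful about is to make sure the direct-sum decomposition on the right is taken in the category of $\F\sym{n-m}$-modules, which is handled by the equivariance observation above. I would write the argument in two or three lines after recalling the definition $\mathcal{F}_m(X)=X^{\sym{m}}$ together with the fact that the $\sym{n-m}$-action on $X^{\sym{m}}$ is the restriction of the $\sym{n-m}$-action on $X$.
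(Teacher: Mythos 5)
Your proof is correct and follows essentially the same route as the paper: identify $(M\oplus N)^{\sym{m}}=M^{\sym{m}}\oplus N^{\sym{m}}$ and observe that this is compatible with the $\sym{n-m}$-action. You simply spell out the componentwise-fixed-point check and the equivariance of the projections, which the paper leaves implicit.
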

\begin{proof}
Note that $(M\oplus N)^{\sym{m}}=M^{\sym{m}}\oplus N^{\sym{m}}$. The lemma follows by restricting both sides of the equality to $\sym{n-m}$ and the definition of fixed-point functors.
\end{proof}
\begin{prop}\label{P;Projective}
Let $P$ be a projective $\F\sym{n}$-module. Then $\mathcal{F}_m(P)$ is a projective $\F\sym{n-m}$-module.
\end{prop}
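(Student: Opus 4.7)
The plan is to reduce the problem to showing that $\mathcal{F}_m(\F\sym{n})$ is projective and then to compute this fixed-point space as an explicit free module. Any projective $\F\sym{n}$-module $P$ is a direct summand of some free module $\F\sym{n}^r$, and by Lemma \ref{L;Directsum} the functor $\mathcal{F}_m$ commutes with finite direct sums. Hence $\mathcal{F}_m(P)$ is a direct summand of $\mathcal{F}_m(\F\sym{n})^r$, so once $\mathcal{F}_m(\F\sym{n})$ is known to be projective, the proposition will follow.

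To analyse $\mathcal{F}_m(\F\sym{n})=(\F\sym{n})^{\sym{m}}$ as an $\F\sym{n-m}$-module, I would first restrict the left regular module $\F\sym{n}$ to $\sym{(m,n-m)}=\sym{m}\times\sym{n-m}$. This is a permutation module on the set $\sym{n}$, where $(s,t)\in\sym{m}\times\sym{n-m}$ acts by $(s,t)\cdot g=stg$. Because $\sym{m}\cap\sym{n-m}=\{1\}$ and the two subgroups commute, the stabilizer of any $g\in\sym{n}$ is trivial; consequently each $\sym{m}\times\sym{n-m}$-orbit is regular, and the orbits are indexed by a transversal of $\sym{(m,n-m)}\backslash\sym{n}$, of which there are $\binom{n}{m}$. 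In module terms,
\[
\F\sym{n}\downarrow_{\sym{(m,n-m)}}\;\cong\;\bigoplus_{i=1}^{\binom{n}{m}}\bigl(\F\sym{m}\boxtimes\F\sym{n-m}\bigr),
\]
where each summand is the regular $\F[\sym{m}\times\sym{n-m}]$-module.

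Finally I apply the $\sym{m}$-fixed-point functor summand by summand. On each summand the $\sym{m}$-action only touches the left tensor factor, and the $\sym{m}$-invariants of the left regular $\F\sym{m}$-module are one-dimensional, spanned by $\sum_{s\in\sym{m}}s$. Hence $(\F\sym{m}\boxtimes\F\sym{n-m})^{\sym{m}}\cong\F\sym{n-m}$ as $\F\sym{n-m}$-modules, and therefore
\[
\mathcal{F}_m(\F\sym{n})\;\cong\;\bigl(\F\sym{n-m}\bigr)^{\oplus\binom{n}{m}}
\]
is free over $\F\sym{n-m}$. Combined with the reduction in the first paragraph this yields the desired projectivity of $\mathcal{F}_m(P)$. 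I do not anticipate any serious obstacle: the only nontrivial observation is that the transitivity-with-trivial-stabilizers of the $\sym{m}\times\sym{n-m}$-action on $\sym{n}$ turns the restriction of $\F\sym{n}$ into a free $\F[\sym{m}\times\sym{n-m}]$-module, after which taking $\sym{m}$-invariants is a routine computation.
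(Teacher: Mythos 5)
Your proof is correct, and the reduction to the regular module is a genuinely different (and more elementary) route than the one the paper takes. The paper works directly with a general projective $P$: it restricts $P$ to $\sym{(m,n-m)}$, invokes the fact that this restriction is again projective, hence decomposes as $\bigoplus_i P_i\boxtimes Q_i$ with $P_i$, $Q_i$ indecomposable projective over $\F\sym{m}$ and $\F\sym{n-m}$ respectively, and then applies the $\sym{m}$-fixed-point functor componentwise to land in a direct sum of copies of the $Q_i$'s. You instead use Lemma~\ref{L;Directsum} to reduce to $P=\F\sym{n}$, observe that $\F\sym{n}\downarrow_{\sym{(m,n-m)}}$ is free of rank $\binom{n}{m}$ (the left $\sym{(m,n-m)}$-action on the basis $\sym{n}$ has trivial stabilizers), and compute the $\sym{m}$-invariants of the regular $\F\sym{m}$-factor explicitly as the one-dimensional span of $\sum_{s\in\sym{m}}s$. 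What your version buys is that it bypasses the structure theory of projective modules over $\F[\sym{m}\times\sym{n-m}]$ and replaces it with a hands-on computation in the group algebra; what it costs is the extra reduction step and the observation about trivial stabilizers, both of which you handle correctly. Either argument rests on the same core fact that restriction to the Young subgroup $\sym{(m,n-m)}$ preserves projectivity.
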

\begin{proof}
We may assume that $P$ is non-zero. As $P$ is projective, for some positive integer $\ell$, we have
\begin{align}
(P{\downarrow_{\sym{(m,n-m)}}})^{\sym{m}}\cong\bigoplus_{i=1}^\ell(P_i\boxtimes Q_i)^{\sym{m}}=\bigoplus_{i=1}^\ell(P_i^{\sym{m}}\boxtimes Q_i),
\end{align}
where $P_i$ and $Q_i$ are indecomposable projective $\F\sym{m}$-module and $\F\sym{n-m}$-module respectively for all $1\leq i\leq \ell$. Regarding both sides of $(3.1)$ as $\F\sym{n-m}$-modules, we get that $\mathcal{F}_m(P)\cong\bigoplus_{i=1}^\ell \dim_\F {P_i}^{\sym{m}}. Q_i$, which implies the desired result.
\end{proof}
Let $\ell$ be an integer and $\Omega^\ell$ be the $\ell$th Heller translate of modules of a given symmetric group. We have
\begin{cor}\label{C;Heler}
Let $M$ be an $\F\sym{n}$-module. If $m<p$, for any integer $\ell$, we have $\mathcal{F}_m(\Omega^\ell(M))\cong \Omega^\ell(\mathcal{F}_m(M))$ in the stable category of $\F\sym{n-m}$-modules.
\end{cor}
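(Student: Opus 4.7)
The approach is to combine the two properties of $\mathcal{F}_m$ already in hand: its exactness when $m<p$ (due to Hemmer) and its preservation of projective modules (Proposition \ref{P;Projective}). Since the group algebras $\F\sym{n}$ and $\F\sym{n-m}$ are Frobenius algebras, projective modules coincide with injective modules, so $\mathcal{F}_m$ automatically also sends injective modules to injective modules.

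First I would handle the case $\ell=1$. Fixing a projective cover $P\twoheadrightarrow M$, one has a short exact sequence
\begin{equation*}
0\to \Omega(M)\to P\to M\to 0.
\end{equation*}
Applying the exact functor $\mathcal{F}_m$ yields the short exact sequence
\begin{equation*}
0\to \mathcal{F}_m(\Omega(M))\to \mathcal{F}_m(P)\to \mathcal{F}_m(M)\to 0
\end{equation*}
in which the middle term is projective by Proposition \ref{P;Projective}. The standard fact that any short exact sequence $0\to K\to Q\to N\to 0$ with $Q$ projective realises $K$ as $\Omega(N)$ in the stable category then gives $\mathcal{F}_m(\Omega(M))\cong \Omega(\mathcal{F}_m(M))$ stably.

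For $\ell>1$, I would induct on $\ell$, using $\Omega^\ell(M)\cong \Omega(\Omega^{\ell-1}(M))$ in the stable category together with Lemma \ref{L;Directsum} to absorb the projective summands (which vanish stably, again by Proposition \ref{P;Projective}). For $\ell<0$, I would run the dual argument with an injective envelope sequence $0\to M\to I\to \Omega^{-1}(M)\to 0$ and then induct; here exactness of $\mathcal{F}_m$, combined with the fact that it preserves injectives (equivalently projectives, by the Frobenius property of the group algebras involved), delivers the analogous isomorphism. The case $\ell=0$ is immediate.

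I expect no serious obstacle: exactness of $\mathcal{F}_m$ in the range $m<p$ does the real work, while Proposition \ref{P;Projective} and Lemma \ref{L;Directsum} guarantee that everything is measured correctly in the stable module category. The only mildly delicate point is the two-sided induction; it forces one to notice that Frobenius-ness is needed to treat $\ell<0$, but this is a formal feature of the symmetric group setting and causes no trouble.
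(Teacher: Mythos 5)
Your proof is correct and follows essentially the same route as the paper: exactness of $\mathcal{F}_m$ for $m<p$, preservation of projectives (hence injectives, by the Frobenius property) via Proposition \ref{P;Projective}, and then applying $\mathcal{F}_m$ to a (minimal) projective or injective resolution to identify Heller translates in the stable category. The paper states this in one line as "$\mathcal{F}_m$ takes a minimal projective (resp.\ injective) resolution of $M$ to a projective (resp.\ injective) resolution of $\mathcal{F}_m(M)$," which is precisely the inductive argument you spelled out.
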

\begin{proof}
The case $\ell=0$ is clear by Lemma \ref{L;Directsum} and Proposition \ref{P;Projective}. When $\ell\neq 0$, note that $\mathcal{F}_m$ is exact. By Proposition \ref{P;Projective} again, $\mathcal{F}_m$ takes a minimal projective resolution (resp. a minimal injective resolution) of $M$ to be a projective resolution (resp. an injective resolution) of $\mathcal{F}_m(M)$. Therefore, the desired result follows.
\end{proof}
Corollary \ref{C;Heler} may not be true if the fixed-point functor is not exact. For example, let $p=2$ and $n=4$. Note that $\mathcal{F}_2$ is not exact. Let $P$ be the projective cover of the trivial $\F\sym{4}$-module $\F$. For the short exact sequence
$0\rightarrow\Omega^1(\F)\rightarrow P\rightarrow\F\rightarrow 0$, by Proposition \ref{P;Projective} and long exact sequence,
we have $$0\rightarrow\mathcal{F}_2(\Omega^1(\F))\rightarrow\mathcal{F}_2(P)\rightarrow\F\rightarrow \text{Ext}_{\F\sym{4}}^1(M^{(2,1^2)}, \Omega^1(\F))\rightarrow 0.$$
As $\Omega^1(\F){\downarrow_{\sym{2}}}$ is not free, $\text{Ext}_{\F\sym{4}}^1(M^{(2,1^2)}, \Omega^1(\F))\cong \text{Ext}_{\F\sym{2}}^1(\F, \Omega^1(\F){\downarrow_{\sym{2}}})\neq 0$. These
facts and the long exact sequence imply that $\mathcal{F}_2(\Omega^1(\F))\cong \mathcal{F}_2(P)$. So $\mathcal{F}_2(\Omega^1(\F))$ is projective by Proposition \ref{P;Projective}. However, the $\F\sym{2}$-module $\Omega^1(\F)$ is not projective. We thus have $\mathcal{F}_2(\Omega^1(\F))\not\cong \Omega^1(\F)$ in the stable category of $\F\sym{2}$-modules.
\begin{lem}\label{L;permutation1}
Let $H\leq \sym{n}$. Then $\mathcal{F}_m(\F_H{\uparrow^{\sym{n}}})$ is a permutation $\F\sym{n-m}$-module.
\end{lem}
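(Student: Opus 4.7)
The plan is to exhibit an explicit permutation basis of $\mathcal{F}_m(\F_H\uparrow^{\sym{n}})$ that is permuted by $\sym{n-m}$. First, I would write $\F_H\uparrow^{\sym{n}} = \F\B$, where $\B = \{gv : g \in \{\sym{n}/H\}\}$ is a basis obtained from a transversal of $H$ in $\sym{n}$ acting on a generator $v$ of $\F_H$. The group $\sym{n}$ acts on $\B$ by permutations, and in particular so does the subgroup $\sym{(m,n-m)} = \sym{m}\times\sym{n-m}$.

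Next, I would invoke the standard observation that whenever a finite group $K$ permutes a basis $\B$ of an $\F$-module $M = \F\B$, the fixed-point subspace $M^K$ has $\F$-basis consisting of the $K$-orbit sums $s_\mathcal{O} = \sum_{b \in \mathcal{O}} b$, one for each $K$-orbit $\mathcal{O}$ of $K$ on $\B$ (this follows at once from the fact that a vector $\sum c_b b$ is $K$-fixed if and only if the coefficients $c_b$ are constant on each $K$-orbit). Applying this to $K = \sym{m}$ and $M = \F_H\uparrow^{\sym{n}}$ yields that
\[
\mathcal{F}_m(\F_H\uparrow^{\sym{n}}) = (\F_H\uparrow^{\sym{n}})^{\sym{m}} = \bigoplus_{\mathcal{O}} \F\cdot s_\mathcal{O},
\]
where $\mathcal{O}$ ranges over the $\sym{m}$-orbits on $\B$.

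Finally, I would observe that $\sym{n-m}$ centralises $\sym{m}$ in $\sym{n}$. Hence for any $\sigma \in \sym{n-m}$ and any $\sym{m}$-orbit $\mathcal{O}$ on $\B$, the set $\sigma\mathcal{O}$ is again a $\sym{m}$-orbit on $\B$, and
\[
\sigma\cdot s_\mathcal{O} = \sum_{b\in \mathcal{O}} \sigma b = \sum_{b' \in \sigma\mathcal{O}} b' = s_{\sigma\mathcal{O}}.
\]
Thus $\sym{n-m}$ permutes the basis $\{s_\mathcal{O}\}$ of $\mathcal{F}_m(\F_H\uparrow^{\sym{n}})$, which is precisely what it means for this module to be a permutation $\F\sym{n-m}$-module.

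There is no real obstacle here; this lemma is essentially an unpacking of definitions once one identifies the orbit-sum basis. The only point requiring any care is the verification that $\sym{n-m}$ indeed maps orbits to orbits, which is immediate from the commutation of $\sym{m}$ and $\sym{n-m}$ inside $\sym{n}$.
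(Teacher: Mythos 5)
Your proof is correct and follows essentially the same approach as the paper: take the coset basis $\B$ of $\F_H\uparrow^{\sym{n}}$, observe that the $\sym{m}$-orbit sums form a basis of the fixed-point space, and use the fact that $\sym{n-m}$ centralises $\sym{m}$ to see that $\sym{n-m}$ permutes these orbit sums. The paper just carries out the orbit-sum and permutation verifications in explicit coordinates rather than citing the standard orbit-sum description of fixed points.
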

\begin{proof}
It is sufficient to check that $\mathcal{F}_m(\F_H{\uparrow^{\sym{n}}})$ has a permutation basis under the action of $\sym{n-m}$. Let $s=|\sym{n}:H|$ and $\B=\bigcup_{i=1}^s\{g_iH\}$ be a complete set of representatives of left cosets of $H$ in $\sym{n}$.
For any $\sigma\in \sym{n}$ and $g_iH\in \B$, there exists a unique $g_jH\in \B$ such that $\sigma g_iH=g_jH$.
We denote $g_j$ by $g_{\sigma,i}$ and view $\B$ as a basis of $\F_H{\uparrow^{\sym{n}}}$. For all $1\leq i\leq s$, let $\O(g_i)$ be the orbit of $\B$ containing $g_iH$
under the action of $\sym{m}$ and $\O(g_i)=\{x_{i,1}g_iH,\ldots, x_{i,t_i}g_iH\}$. So  $\B=\biguplus_{j=1}^{t}\O(g_{i_j})$,
where, for all $1\leq j\leq t\leq s$, $t_{i_j}=|\O(g_{i_j})|$ and $x_{i_j,1},\ldots, x_{i_j,t_{i_j}}\in\sym{m}$.
For any $\sigma\in \sym{n}$, let $\sigma\O(g_i)=\{\sigma xH:\ xH\in\O(g_i)\}$ for all $1\leq i\leq s$.
Moreover, write $\overline{\O(g_i)}$ to be the sum $\sum_{xH\in \O(g_i)}xH$ and notice that $\overline{\B}=\bigcup_{j=1}^t\{\overline{\O(g_{i_j})}\}$
is a basis of $(\F_H{\uparrow^{\sym{n}}})^{\sym{m}}$. For any $\tau\in\sym{n-m}$, $\O(g_{i_j})\subseteq \B$, if $|\O(g_{i_j})|>1$, for any integers $u,v$ such that $1\leq u<v\leq t_{i_j}$, note that $x_{i_j,u}g_{\tau,i_j}H\neq x_{i_j,v}g_{\tau,i_j}H$ as $x_{i_j,u}g_{i_j}H\neq x_{i_j,v}g_{i_j}H$. Therefore, $\tau\O(g_{i_j})\subseteq\O(g_{\tau,i_j})$. Similarly, we have $\tau^{-1}\O(g_{\tau,i_j})\subseteq\O(g_{i_j})$. The two relations imply that $\tau\O(g_{i_j})=\O(g_{\tau,i_j})=\O(g_{i_a})$ for some $a\in \mathbb{N}$ and $a\leq t$. We thus have
$$\tau\overline{\O(g_{i_j})}=\tau\sum_{k=1}^{t_{i_j}}x_{i_j,k}g_{i_j}H=\sum_{k=1}^{t_{i_j}}x_{i_j,k}\tau g_{i_j}H=\sum_{k=1}^{t_{i_j}}x_{i_j,k}g_{\tau,i_j}H=\overline{\O(g_{\tau,i_j})}=\overline{\O(g_{i_a})}.$$
If $|\O(g_{i_j})|=1$, by a similar deduction, $\tau\overline{\O(g_{i_j})}=\overline{\O(g_{i_b})}$ where $b\in \mathbb{N}$, $b\leq t$ and $|\O(g_{i_b})|=1$. So $\overline{\B}$ is a permutation basis under the action of $\sym{n-m}$. \end{proof}

We close the section by a corollary. It will be used in the following discussion.
\begin{cor}\label{L;permutation}
Let $M$ be a $p$-permutation $\F\sym{n}$-module. If $\mathcal{F}_m(M)\neq 0$, then $\mathcal{F}_m(M)$ is a $p$-permutation $\F\sym{n-m}$-module.
\end{cor}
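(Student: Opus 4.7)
The plan is to exploit the equivalent characterization of $p$-permutation modules as direct summands of permutation modules, and combine the two immediately preceding results (Lemma \ref{L;Directsum} and Lemma \ref{L;permutation1}) to transfer this property through $\mathcal{F}_m$.

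First I would recall that, since $M$ is a $p$-permutation $\F\sym{n}$-module, by the characterization noted in the paper (a $p$-permutation module is exactly a direct summand of a permutation module), there exists an $\F\sym{n}$-module $N$ and subgroups $H_1,\ldots,H_s \leq \sym{n}$ such that
\[
M \oplus N \cong \bigoplus_{i=1}^{s} \F_{H_i}\uparrow^{\sym{n}}.
\]
Next I would apply the fixed-point functor $\mathcal{F}_m$ to both sides. By Lemma \ref{L;Directsum}, $\mathcal{F}_m$ commutes with direct sums, so
\[
\mathcal{F}_m(M) \oplus \mathcal{F}_m(N) \cong \bigoplus_{i=1}^{s} \mathcal{F}_m\bigl(\F_{H_i}\uparrow^{\sym{n}}\bigr).
\]

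Finally, by Lemma \ref{L;permutation1}, each summand $\mathcal{F}_m(\F_{H_i}\uparrow^{\sym{n}})$ is a permutation $\F\sym{n-m}$-module, so the right-hand side is a permutation $\F\sym{n-m}$-module. Hence $\mathcal{F}_m(M)$ is a direct summand of a permutation $\F\sym{n-m}$-module, and is therefore a $p$-permutation $\F\sym{n-m}$-module, as required.

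There is essentially no obstacle here: the corollary is a formal consequence of the two preceding lemmas once the standard characterization of $p$-permutation modules as summands of permutation modules is invoked. The only mild subtlety is making sure to use the decomposition-closure viewpoint rather than the basis-by-basis definition, since the latter would require exhibiting a $Q$-permutation basis of $\mathcal{F}_m(M)$ for each $p$-subgroup $Q \leq \sym{n-m}$, which is less transparent than simply passing through the ambient permutation module.
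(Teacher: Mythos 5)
Your proof is correct and follows essentially the same route as the paper's: realize $M$ as a direct summand of a direct sum of transitive permutation modules, then apply Lemma \ref{L;Directsum} to commute $\mathcal{F}_m$ with the direct sum and Lemma \ref{L;permutation1} to each transitive summand. Nothing more to add.
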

\begin{proof}
View $M$ as a direct summand of a direct sum of some transitive permutation $\F\sym{n}$-modules. By Lemmas \ref{L;Directsum} and \ref{L;permutation1}, observe that $\mathcal{F}_m(M)$ is a non-zero direct summand of a direct sum of some permutation $\F\sym{n-m}$-modules. So $\mathcal{F}_m(M)$ is indeed a $p$-permutation $\F\sym{n-m}$-module by the definition.
\end{proof}
\section{The Specht modules labelled by hook partitions}
The section focuses on proving Theorem \ref{T;A}. Throughout the whole section, fix $n>1$ and integers $m,r$ satisfying $1\leq m, r<n$. We first present some explicit calculation results of $\mathcal{F}_m(S^{(n-r,1^r)})$ and then finish the proof of Theorem \ref{T;A}.
\begin{nota}\label{N;Notation1}
For convenience, we introduce the required notation.
\begin{enumerate}[(i)]
\item Let $S$ be a set and write $\langle S\rangle$ to denote an $\F$-linear space spanned by elements of $S$. Set $\langle\varnothing\rangle=0$ and $\langle s\rangle=\langle S\rangle$ if $S=\{s\}$. Let $A_n$ be a $n$-dimensional linear space over $\F$ with a basis $\{e_1,\ldots,e_n\}$ and  $A_b=\langle\{e_1,\ldots,e_b\}\rangle$ for all $1\leq b\leq n$. Write $f_i=e_i-e_n$ for all $1\leq i\leq n$. Note that $A_b$ is an $\F\sym{b}$-module as $\sym{b}$ permutes the subscripts of its basis $\{e_1,\ldots,e_b\}$. So $A_n\cong M^{(n-1,1)}$ and $S^{(n-1,1)}\cong\langle\{f_1,\ldots,
    f_{n-1}\}\rangle$. Identify a basis of $S^{(n-1,1)}$ with $\{f_1,\ldots,f_{n-1}\}$. If $n-m\geq 2$, also identify a basis of $S^{(n-m-1,1)}$ with $\{f_{m+1},\ldots,f_{n-1}\}$.
\item Let $a,b,c\in \mathbb{N}$. Set $J(a,b,c)=\{\underline{\mathbf{i}}=(i_1,\ldots,i_b)\in\mathbb{N}^b:\  a\leq i_1<\cdots<i_b\leq c\}$ and $J^+(a,b,c)=\{\underline{\mathbf{i}}=(i_1,\ldots,i_b)\in\ \mathbb{N}^b:\ a\leq i_1<\cdots<i_b<c\}$. Write $J(a,b)$, $J^+(a,b)$ to denote $J(a,b,n)$, $J^+(a,b,n)$ respectively. Let $b\leq n$ and $\bigwedge^cA_b$, $\bigwedge^c S^{(n-1,1)}$ be the $c$th exterior power of $A_b$, $S^{(n-1,1)}$ respectively. By convention, put $\bigwedge^0S^{(n-1,1)}=\F$ and $\bigwedge^1S^{(n-1,1)}=S^{(n-1,1)}$. If $c<n$ and $c\leq b$, then $\bigwedge^c A_b$ has a basis
    $\{e_{\underline{\mathbf{i}}}=e_{i_1}\wedge\cdots\wedge e_{i_c}:\ \underline{\mathbf{i}}=(i_1,\ldots,i_c)\in J(1,c,b)\}$
    and $\bigwedge^cS^{(n-1,1)}$ has a basis $\{f_{\underline{\mathbf{i}}}=f_{i_1}\wedge\cdots\wedge f_{i_c}:\ \underline{\mathbf{i}}=(i_1,\ldots,i_c)\in J^+(1,c)\}$. Use $\B_{A_b}^c$, $\B_S^c$ to denote the two bases respectively. If $r+m-n<c<r$ and $n-m\geq 2$, then the $(r-c)$th exterior power $\bigwedge^{r-c}S^{(n-m-1,1)}$ has a basis  $\{f_{\underline{\mathbf{i}}}:\ \underline{\mathbf{i}}\in J^+(m+1,r-c)\}$. For any $v\in\bigwedge^rS^{(n-1,1)}$ and $f_{\underline{\mathbf{i}}}\in \B_S^r$, let $ d_{\underline{\mathbf{i}}}^{v}$ be the coefficient of $f_{\underline{\mathbf{i}}}$ in the linear combination of elements of $\B_S^r$ representing $v$.
\item  Let $\ell$ be an integer such that $0\leq \ell\leq r$. Recall that $\mathbf{m}=\{1,\ldots,m\}$ and set
 $$B_\ell=\{f_{\underline{\mathbf{i}}}=f_{i_1}\wedge\cdots\wedge f_{i_r}\in\B_S^r:\ |\{i_1,\ldots,i_r\}\cap\mathbf{m}|=\ell\}.$$
Note that $B_{u}\cap B_{v}=\varnothing$ if $u\neq v$ and observe that $B_{\ell}\neq\varnothing$ if and only if
\begin{align}
r+m-n+1\leq \ell\leq m.
\end{align}
For any $v\in (\bigwedge^rS^{(n-1,1)})^{\sym{m}}$ and $B_{\ell}\neq\varnothing$, put $\overline{B_{\ell}^v}=\sum_{w\in B_{\ell}} d_{w}^{v}w,$ where $d_{w}^v=d_{\underline{\mathbf{i}}}^v$ if $w=f_{\underline{\mathbf{i}}}$. For completeness, let $\overline{B_{\ell}^v}=0$ if $B_{\ell}=\varnothing$. Then $v=\sum_{i=0}^r\overline{B_{i}^v}.$ Moreover, for any $g\in\sym{m}$,
$g\overline{B_{i}^v}=\overline{B_{i}^v}$ for all $0\leq i\leq r$ as $gv=v$.
\item Let $b,c\in \mathbb{N}$ such that $b,c<n$. Set $v_{b,c}=\sum_{\underline{\mathbf{i}}\in J(1,c,b)}f_{\underline{\mathbf{i}}}$ if $c\leq b$ and put $v_{b,c}=0$ if $b<c$. So $v_{b,c}\in\bigwedge^c S^{(n-1,1)}$. Note that $\langle \{gv_{b,c}:\ g\in\F\sym{b}\}\rangle$ is isomorphic to an $\F\sym{b}$-submodule of $\bigwedge^c A_b$. In particular, $\langle v_{b,c}\rangle\subseteq \bigwedge^c A_b$ up to $\F\sym{b}$-isomorphism.
\end{enumerate}
\end{nota}

We now present our calculation results.
\begin{lem}\label{L;S(n-1,1)}
We have
\[\mathcal{F}_m(S^{(n-1,1)})\cong\begin{cases} S^{(n-m-1,1)}\oplus\F, & \text{if}\ p\mid m,\ 1<n-m,\\
M^{(n-m-1,1)}, &\text{if}\ p\nmid m,\ 1<n-m,\\
\F, &\text{if}\ n-m=1.\end{cases}
\]
\end{lem}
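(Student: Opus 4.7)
The plan is to exploit the short exact sequence
$$0\to S^{(n-1,1)}\to M^{(n-1,1)}\xrightarrow{\varepsilon}\F\to 0,$$
where $\varepsilon$ is the augmentation $e_i\mapsto 1$. Although $\mathcal{F}_m$ is not exact in general when $m\geq p$, it is always left exact (as a Hom-functor into the trivial module), so applying it produces
$$0\to\mathcal{F}_m(S^{(n-1,1)})\to \mathcal{F}_m(M^{(n-1,1)})\xrightarrow{\bar\varepsilon}\F,$$
and the task reduces to computing the kernel of $\bar\varepsilon$.

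First I would describe $\mathcal{F}_m(M^{(n-1,1)})$ directly: since $\sym{m}$ permutes $\{e_1,\ldots,e_m\}$ and fixes $\{e_{m+1},\ldots,e_n\}$, the fixed subspace has basis $\{e_1+\cdots+e_m,\, e_{m+1},\ldots,e_n\}$. As $\sym{n-m}$ fixes $e_1+\cdots+e_m$ and permutes $\{e_{m+1},\ldots,e_n\}$ in the usual way, we obtain an $\F\sym{n-m}$-isomorphism $\mathcal{F}_m(M^{(n-1,1)})\cong\F\oplus M^{(n-m-1,1)}$ when $n-m>1$ (and a $2$-dimensional trivial module when $n-m=1$). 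Under this identification the induced map $\bar\varepsilon$ acts as multiplication by $m$ on the $\F$ summand and as the augmentation on $M^{(n-m-1,1)}$.

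Then I would split the analysis according to the statement. If $n-m=1$, the kernel of $\bar\varepsilon$ is the line spanned by $(e_1+\cdots+e_m)-me_n$, giving $\F$ since $\sym{n-m}$ is trivial. If $p\mid m$ and $n-m>1$, then $\bar\varepsilon=(0,\varepsilon)$, so the kernel splits as $\F\oplus\ker\varepsilon=\F\oplus S^{(n-m-1,1)}$. Finally, if $p\nmid m$ and $n-m>1$, multiplication by $m$ is an isomorphism on the trivial summand, so the kernel is the graph $\{(-m^{-1}\varepsilon(v),v):v\in M^{(n-m-1,1)}\}$, which is $\sym{n-m}$-equivariantly isomorphic to $M^{(n-m-1,1)}$ via projection to the second coordinate.

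No step is really an obstacle here; the argument is a direct computation of invariants together with a case analysis on the scalar $m\in\F$. The only mildly subtle point is recognising that the kernel in case $p\nmid m$ is not internally a direct summand of $\F\oplus M^{(n-m-1,1)}$ in the obvious coordinates, but must be identified as a graph subspace — which makes it genuinely isomorphic to $M^{(n-m-1,1)}$ rather than to $\F\oplus S^{(n-m-1,1)}$. Once this is observed the three cases of the lemma follow immediately.
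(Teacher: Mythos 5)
Your proof is correct, and it takes a genuinely different route from the paper's. The paper computes $(S^{(n-1,1)})^{\sym{m}}$ directly in terms of the basis $\{f_1,\ldots,f_{n-1}\}$ of $S^{(n-1,1)}$: it writes the fixed-point space as $\langle\{f_{m+1},\ldots,f_{n-1}\}\rangle\oplus\langle f_1+\cdots+f_m\rangle$, observes when $p\mid m$ that $f_1+\cdots+f_m=\sum_{i\leq m}e_i$ is $\sym{n-m}$-invariant so the two summands are $\F\sym{n-m}$-submodules, and when $p\nmid m$ passes to the ``center of mass'' basis $\{sum-e_j:\ m<j\leq n\}$ with $sum=\tfrac{1}{m}\sum_{i\leq m}e_i$ to realize the whole space as a permutation module $M^{(n-m-1,1)}$. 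You instead apply the left-exact functor $\mathcal{F}_m$ to the defining short exact sequence $0\to S^{(n-1,1)}\to M^{(n-1,1)}\to\F\to 0$, compute $(M^{(n-1,1)})^{\sym{m}}$ from the obvious orbit-sum basis (which is a near-trivial step since $M^{(n-1,1)}$ is a permutation module), and then identify the kernel of the induced augmentation. The two approaches trade the same combinatorial content --- the paper's change of basis in the $p\nmid m$ case is essentially your graph-subspace identification viewed from inside $S^{(n-1,1)}$ --- but your framing makes the dependence on the scalar $m\in\F$ cleaner and localizes the entire computation to the ambient permutation module rather than the Specht module itself. Both arguments are correct and comparably short; yours has the minor advantage of isolating exactly why the answer bifurcates on $p\mid m$, at the small cost of invoking left-exactness of $\mathcal{F}_m$ (which is standard: it is a $\mathrm{Hom}$ out of $\F$).
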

\begin{proof}
When $n-m\geq 2$, note that
$$(S^{(n-1,1)})^{\sym{m}}=\langle\{f_{m+1},\ldots,f_{n-1}\}\rangle\oplus\langle
f_1+\cdots+f_m\rangle.$$
If $p\mid m$, we get that $\sum_{i=1}^mf_i=\sum_{i=1}^me_i$ and
\begin{align*}
\mathcal{F}_m(S^{(n-1,1)})&=\langle\{f_{m+1},\ldots,f_{n-1}\}\rangle\oplus\langle
e_1+\cdots+e_m\rangle\\
&\cong S^{(n-m-1,1)}\oplus\F.
\end{align*}
If $p\nmid m$, denote $\frac{1}{m}\sum_{i=1}^me_i$ by $sum$ and have
\begin{align*}
\mathcal{F}_m(S^{(n-1,1)})=\langle\{sum-e_{m+1},\ldots,
sum-e_n\}\rangle
\cong M^{(n-m-1,1)}.
\end{align*}
When $n-m=1$, we get $\mathcal{F}_m(S^{(n-1,1)})=\langle\sum_{i=1}^{n-1}f_i\rangle$,
which is a trivial module. The lemma now follows.
\end{proof}
For further calculation, recall that $S^{(n-s,1^s)}\cong\bigwedge^s S^{(n-1,1)}$ for all $0\leq s<n$ (see \cite[Proposition 2.3]{Muller}).
\begin{lem}\label{L;Afixedpoints}
Let $p>2$, $b,c\in \mathbb{N}$ and $c\leq b\leq n$. We have
\[(\bigwedge^cA_b)^{\sym{b}}=\begin{cases} 0, & \text{if}\ c>1,\\
\langle \{e_1+\cdots+e_b\}\rangle, & \text{if}\ c=1.
\end{cases}\]
\end{lem}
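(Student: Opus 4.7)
The plan is to split into the two cases $c=1$ and $c\geq 2$. When $c=1$, we have $\bigwedge^1 A_b = A_b$, which is the natural permutation module for $\sym{b}$ with basis $\{e_1,\ldots,e_b\}$; the fixed-point space under a transitive permutation action is one-dimensional and spanned by the sum of the basis, giving $\langle e_1+\cdots+e_b\rangle$. This is entirely classical and does not require $p>2$.

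The main content is the case $c\geq 2$. The approach I would take is a direct coordinate argument using a carefully chosen transposition. Given any $\underline{\mathbf{i}}=(i_1<\cdots<i_c)\in J(1,c,b)$, consider the transposition $\sigma=(i_1,i_2)\in\sym{b}$, which is well-defined precisely because $c\geq 2$. The key observations are twofold: first, $\sigma\cdot e_{\underline{\mathbf{i}}} = e_{i_2}\wedge e_{i_1}\wedge e_{i_3}\wedge\cdots\wedge e_{i_c} = -e_{\underline{\mathbf{i}}}$, because swapping two adjacent factors in a wedge product changes the sign; and second, no other basis vector $e_{\underline{\mathbf{j}}}$ with $\underline{\mathbf{j}}\neq\underline{\mathbf{i}}$ satisfies $\sigma\cdot e_{\underline{\mathbf{j}}}=\pm e_{\underline{\mathbf{i}}}$. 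The second point I would justify by noting that $\sigma$ stabilizes the set $\{i_1,\ldots,i_c\}$ (since both $i_1,i_2$ already lie in it), so if $\sigma(\{j_1,\ldots,j_c\})=\{i_1,\ldots,i_c\}$ as sets then $\{j_1,\ldots,j_c\}=\sigma^{-1}(\{i_1,\ldots,i_c\})=\{i_1,\ldots,i_c\}$, forcing $\underline{\mathbf{j}}=\underline{\mathbf{i}}$.

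Given these two facts, write an arbitrary element $v=\sum_{\underline{\mathbf{j}}\in J(1,c,b)}a_{\underline{\mathbf{j}}}e_{\underline{\mathbf{j}}}\in(\bigwedge^c A_b)^{\sym{b}}$ and compare the coefficient of $e_{\underline{\mathbf{i}}}$ in the identity $\sigma v = v$. By the observations above, the only contribution on the left comes from $e_{\underline{\mathbf{i}}}$ itself, yielding $-a_{\underline{\mathbf{i}}}$, while the right contributes $a_{\underline{\mathbf{i}}}$. Hence $2a_{\underline{\mathbf{i}}}=0$, and the hypothesis $p>2$ forces $a_{\underline{\mathbf{i}}}=0$. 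Since $\underline{\mathbf{i}}$ was arbitrary in $J(1,c,b)$, we conclude $v=0$, so $(\bigwedge^c A_b)^{\sym{b}}=0$.

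The only real subtlety is the verification of the second observation above — making sure that the transposition $(i_1,i_2)$ really does not drag any other basis vector onto $\pm e_{\underline{\mathbf{i}}}$ — but once this is clarified the entire argument is a one-line calculation. The hypothesis $p>2$ is used exclusively at the very end to divide by $2$, which is natural since otherwise $\bigwedge^c A_b$ and its symmetric counterpart $S^c A_b$ behave differently in characteristic two.
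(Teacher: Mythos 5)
Your proof is correct. It differs mildly in structure from the paper's: the paper first argues that any fixed vector must have all coefficients equal (so the fixed space is at most one-dimensional, spanned by $\sum_{\underline{\mathbf{i}}} e_{\underline{\mathbf{i}}}$), and then applies the single transposition $(1,2)$ to show that this candidate is not fixed. You instead show directly that every coefficient vanishes, by choosing for each $\underline{\mathbf{i}}$ the transposition $\sigma = (i_1,i_2)$ tailored to that multi-index. Both arguments exploit exactly the same mechanism — a transposition lying inside the index set $\{i_1,\ldots,i_c\}$ flips the sign of $e_{\underline{\mathbf{i}}}$ while stabilizing the set of indices, and $p>2$ then gives $a_{\underline{\mathbf{i}}} = -a_{\underline{\mathbf{i}}} \Rightarrow a_{\underline{\mathbf{i}}} = 0$. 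Your one-step version is arguably cleaner, since it bypasses the preliminary dimension bound, at the modest cost of having to verify (which you do correctly) that $\sigma$ does not carry any other basis vector onto $\pm e_{\underline{\mathbf{i}}}$. Either form is fine.
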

\begin{proof}
For any $u\in (\bigwedge^cA_b)^{\sym{b}}$ and $u=\sum_{\underline{\mathbf{i}}\in J(1,c,b)}k_{\underline{\mathbf{i}}} e_{\underline{\mathbf{i}}}$, we have $k_{\underline{\mathbf{i}}}=k_{\underline{\mathbf{j}}}$ for any $\underline{\mathbf{i}}$, $\underline{\mathbf{j}}\in J(1,c,b)$ by using suitable permutations to act on $u$ and comparing the coefficients. So $(\bigwedge^cA_b)^{\sym{b}}\subseteq\langle\sum_{\underline{\mathbf{i}}\in J(1,c,b)}e_{\underline{\mathbf{i}}}\rangle$. When $c>1$, set $v=\sum_{\underline{\mathbf{i}}\in J(1,c,b)}e_{\underline{\mathbf{i}}}$ and get that $v=w+\sum_{(1,2,\ldots,i_c)\in J(1,c,b)}e_1\wedge e_2 \wedge\cdots\wedge e_{i_c}$. Note that no basis element $e_1\wedge e_2\wedge\cdots\wedge e_{j_c}$ of $\B_{A_b}^c$ is involved in $w$. Now use the transposition $(1,2)$ to act on both sides of the equality of $v$ and obtain that $(1,2)v=(1,2)w-\sum_{(1,2,\ldots,i_c)\in J(1,c,b)}e_1\wedge e_2\wedge\cdots\wedge e_{i_c}.$
So $(1,2)v\neq v$ and $(\bigwedge^cA_b)^{\sym{b}}=0$. When $c=1$, it is clear that $A_b^{\sym{b}}=\langle e_1+\cdots+e_b\rangle$. The lemma follows.
\end{proof}
\begin{lem}\label{L;p>2,calculation}
Let $p>2$. If $r>1$, for any $v\in(\bigwedge^rS^{(n-1,1)})^{\sym{m}}$, we have
\[v=\begin{cases} \overline{B_{0}^v}+\overline{B_{1}^v}, & \text{if}\ r<n-m,\\
\overline{B_{1}^v}, &\text{if}\ r=n-m,\\
0, & \text{if}\ r>n-m,\end{cases}\]
where $\overline{B_{1}^v}=\sum_{\underline{\mathbf{i}}\in J^+(m+1,r-1)}k_{\underline{\mathbf{i}}}v_{m,1}\wedge f_{\underline{\mathbf{i}}}$
and $k_{\underline{\mathbf{i}}}\in\F$ for all $\underline{\mathbf{i}}\in J^+(m+1,r-1)$.
\end{lem}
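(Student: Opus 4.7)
The plan is to exploit the decomposition of $\bigwedge^r S^{(n-1,1)}$ induced by partitioning the basis $\{f_1,\ldots,f_{n-1}\}$ according to whether each index lies in $\mathbf{m}$ or not. Set $V=\langle\{f_1,\ldots,f_m\}\rangle$ and $W=\langle\{f_{m+1},\ldots,f_{n-1}\}\rangle$, so that $S^{(n-1,1)}=V\oplus W$ as $\F$-spaces. The key point is that $V$ is an $\F\sym{m}$-submodule isomorphic to $A_m$ under $f_i\mapsto e_i$ (since $gf_i=f_{g(i)}$ for $g\in\sym{m}$, $i\leq m$), while $\sym{m}$ fixes $W$ pointwise. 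Taking exterior powers gives
\begin{equation*}
\bigwedge^r S^{(n-1,1)}=\bigoplus_{\ell=0}^r \bigwedge^\ell V\;\wedge\;\bigwedge^{r-\ell}W,
\end{equation*}
and by inspection of bases the summand $\bigwedge^\ell V\wedge\bigwedge^{r-\ell}W$ is exactly the span of $B_\ell$. Consequently, $\overline{B_\ell^v}$ is the component of $v$ in this summand, and since $\sym{m}$ respects the decomposition, each $\overline{B_\ell^v}$ lies in $(\bigwedge^\ell V)^{\sym{m}}\wedge \bigwedge^{r-\ell}W$.

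I would then invoke Lemma~\ref{L;Afixedpoints} with $V\cong A_m$: since $p>2$, $(\bigwedge^\ell V)^{\sym{m}}=0$ whenever $\ell\geq 2$, and so $\overline{B_\ell^v}=0$ for all $\ell\geq 2$. For $\ell=1$, the same lemma gives $V^{\sym{m}}=\langle f_1+\cdots+f_m\rangle=\langle v_{m,1}\rangle$, so any $\sym{m}$-invariant element of $V\wedge\bigwedge^{r-1}W$ is of the form $\sum_{\underline{\mathbf{i}}\in J^+(m+1,r-1)}k_{\underline{\mathbf{i}}}\,v_{m,1}\wedge f_{\underline{\mathbf{i}}}$ for some $k_{\underline{\mathbf{i}}}\in\F$, which is the asserted shape of $\overline{B_1^v}$. (In the degenerate case $r-1>\dim W$, both sides are zero.)

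To finish, I would read off the three cases by checking when the surviving summands are nonzero: $\bigwedge^r W\neq 0$ iff $r\leq\dim W=n-1-m$, i.e.\ iff $r<n-m$; and $\bigwedge^{r-1}W\neq 0$ iff $r\leq n-m$. So when $r<n-m$ both $\overline{B_0^v}$ and $\overline{B_1^v}$ may occur; when $r=n-m$ only $\overline{B_1^v}$ survives; and when $r>n-m$ every $B_\ell\neq\varnothing$ forces $\ell\geq r+m-n+1\geq 2$, so every $\overline{B_\ell^v}$ vanishes and $v=0$. The proof is largely bookkeeping: the only real content is the identification of the $\overline{B_\ell^v}$ with the components of the $V/W$-split of the exterior algebra, after which everything reduces to Lemma~\ref{L;Afixedpoints}. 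The mildly subtle point—and the reason the hypothesis $p>2$ is essential—is that the vanishing of $(\bigwedge^\ell V)^{\sym{m}}$ for $\ell\geq 2$ fails in characteristic $2$, where transposition-sign obstructions disappear.
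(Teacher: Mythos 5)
Your proposal is correct and takes essentially the same approach as the paper: both decompose $\bigwedge^r S^{(n-1,1)}$ according to how many indices lie in $\mathbf{m}$ (the spans of the sets $B_\ell$, which you identify cleanly as $\bigwedge^\ell V\wedge\bigwedge^{r-\ell}W$) and then apply Lemma~\ref{L;Afixedpoints} to kill the components with $\ell\geq 2$. Your formulation via the $\sym{m}$-module splitting $S^{(n-1,1)}=V\oplus W$ and the identity $(\bigwedge^\ell V\wedge\bigwedge^{r-\ell}W)^{\sym{m}}=(\bigwedge^\ell V)^{\sym{m}}\wedge\bigwedge^{r-\ell}W$ is a slightly more structural packaging of the paper's component-by-component argument, but the underlying computation is identical.
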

\begin{proof}
Recall that $v=\sum_{i=0}^r\overline{B_{i}^v}$. Moreover, $\overline{B_{i}^v}\in (\bigwedge^rS^{(n-1,1)})^{\sym{m}}$ for all $0\leq i\leq r$ as $v\in(\bigwedge^rS^{(n-1,1)})^{\sym{m}}$. We claim that $\overline{B_{i}^v}=0$ if $1<i\leq r$. If $B_i\neq \varnothing$, by the definition and an easy calculation, we get that
\begin{equation}
\overline{B_{i}^v}\in\begin{cases} \langle v_{m,i}\wedge w_i\rangle, & \text{if}\ 0<i<r,\\
 \langle v_{m,r}\rangle, & \text{if}\ i=r,\end{cases}
\end{equation}
where
$w_i\in\langle\{f_{\underline{\mathbf{i}}}:\ \underline{\mathbf{i}}\in J^+(m+1,r-i)\}\rangle$.
Suppose that $\overline{B_{u}^v}\neq 0$ for some $1<u\leq r$. We have $u\leq m$, $v_{m,u}\neq 0$ and $w_u\neq 0$ if $u<r$. Notice that $\langle v_{m,u}\rangle\subseteq \bigwedge^u A_m$ up to $\F\sym{m}$-isomorphism. By Lemma \ref{L;Afixedpoints}, we get that $v_{m,u}$ is not fixed by $\sym{m}$ as $(\bigwedge^u A_m)^{\sym{m}}=0$. In particular, $v_{m,u}\wedge w_u\notin(\bigwedge^rS^{(n-1,1)})^{\sym{m}}$ if $1<u<r$ and $v_{m,r}\notin(\bigwedge^rS^{(n-1,1)})^{\sym{m}}$ if $u=r$. By $(4.2)$, these facts imply that $\overline{B_{u}^v}=0$, which is a contradiction. The claim is shown. When $r<n-m$, by $(4.1)$, note that $B_{0}\neq \varnothing$ and $B_{1}\neq \varnothing$. We therefore get $v=\overline{B_{0}^v}+\overline{B_{1}^v}$ by the claim. For the left cases, we can determine whether $B_{0}$ or $B_{1}$ is empty by the given conditions and $(4.1)$. Therefore, we obtain the corresponding results by the claim. The proof is now complete.
\end{proof}

\begin{lem}\label{L;F(S),p>2}
Let $p>2$. If $r>1$, then we have
\[\mathcal{F}_m(S^{(n-r,1^r)})\cong\begin{cases} M, & \text{if}\ p\mid m,\ r<n-m,\\
N, & \text{if}\ p\nmid m,\ r<n-m,\\
sgn, & \text{if}\ r=n-m,\\
0, & \text{if}\ r>n-m,
\end{cases}\]
where $M\cong S^{(n-m-r,1^r)}\oplus S^{(n-m-r+1,1^{r-1})}$ and $N\sim S^{(n-m-r,1^r)}+S^{(n-m-r+1,1^{r-1})}$.
\end{lem}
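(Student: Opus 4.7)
My plan is to read off the $\F\sym{n-m}$-module structure of $\mathcal{F}_m(S^{(n-r,1^r)}) = (\bigwedge^r S^{(n-1,1)})^{\sym{m}}$ directly from Lemma \ref{L;p>2,calculation}, treating the three cases in turn. The cases $r \geq n-m$ are short: for $r > n-m$ Lemma \ref{L;p>2,calculation} already forces the fixed space to be zero, while for $r = n-m$ it is one-dimensional, spanned by $\omega = v_{m,1} \wedge f_{m+1} \wedge \cdots \wedge f_{n-1}$. A direct computation using $\sigma v_{m,1} = v_{m,1} - m f_{\sigma(n)}$ shows that the $f_{\sigma(n)}$ correction wedges trivially with $f_{m+1} \wedge \cdots \wedge f_{n-1}$ (either $f_n = 0$ or a repeated factor appears), so $\sigma$ acts on $\omega$ purely through the sign character inherited from $\bigwedge^{n-m-1}\langle f_{m+1},\dots,f_{n-1}\rangle$, giving the $sgn$ claim.

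For the main case $r < n-m$, I would introduce
\begin{align*}
V_0 &= \langle f_{\underline{\mathbf{i}}} : \underline{\mathbf{i}} \in J^+(m+1,r) \rangle, \\
V_1 &= \langle v_{m,1} \wedge f_{\underline{\mathbf{j}}} : \underline{\mathbf{j}} \in J^+(m+1,r-1) \rangle,
\end{align*}
so that Lemma \ref{L;p>2,calculation} yields the vector space decomposition $\mathcal{F}_m(S^{(n-r,1^r)}) = V_0 \oplus V_1$. The first key step is to recognise $\langle f_{m+1},\dots,f_{n-1}\rangle$ as a $\sym{n-m}$-stable subspace of $S^{(n-1,1)}$ isomorphic to $S^{(n-m-1,1)}$ (the appearance of $f_n = 0$ keeps the action inside this span), whence $V_0 \cong \bigwedge^r S^{(n-m-1,1)} \cong S^{(n-m-r, 1^r)}$ is an $\F\sym{n-m}$-submodule of the fixed space.

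The second key step is to analyse $V/V_0$. Expanding
\[
\sigma \cdot (v_{m,1}\wedge f_{\underline{\mathbf{j}}}) = v_{m,1}\wedge \sigma f_{\underline{\mathbf{j}}} - m f_{\sigma(n)}\wedge \sigma f_{\underline{\mathbf{j}}},
\]
the second summand lies in $V_0$ because each of its wedge factors belongs to $\langle f_{m+1},\dots,f_{n-1}\rangle$. Consequently the map $v_{m,1}\wedge f_{\underline{\mathbf{j}}} + V_0 \mapsto f_{\underline{\mathbf{j}}}$ intertwines the $\sym{n-m}$-actions and identifies $V/V_0$ with $\bigwedge^{r-1} S^{(n-m-1,1)} \cong S^{(n-m-r+1,1^{r-1})}$. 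When $p \nmid m$ this is precisely the Specht filtration $N \sim S^{(n-m-r,1^r)} + S^{(n-m-r+1,1^{r-1})}$. When $p \mid m$ the correction term $m f_{\sigma(n)}$ vanishes in $\F$, so $v_{m,1}$ is itself $\sym{n-m}$-fixed, $V_1$ becomes a genuine submodule isomorphic to $S^{(n-m-r+1,1^{r-1})}$, and $V = V_0 \oplus V_1$ splits as $\F\sym{n-m}$-modules.

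The main technical obstacle is the asymmetric nature of $f_i = e_i - e_n$ under $\sigma \in \sym{n-m}$ when $\sigma(n) \neq n$, which introduces correction terms of the form $-f_{\sigma(n)}$ both in $\sigma v_{m,1}$ and throughout the expansion of $\sigma f_{\underline{\mathbf{j}}}$. The proof works because every such correction carries a factor of $f_{\sigma(n)} \in \langle f_{m+1},\dots,f_{n-1}\rangle$, which is precisely the subspace building $V_0$; hence all corrections are absorbed into $V_0$ upon passing to the quotient, and the apparent asymmetry disappears in the final structural statement.
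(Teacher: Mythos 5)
Your argument is correct and follows essentially the same route as the paper: both use the same subspaces $V_0=\langle\mathcal{C}_0\rangle$ and $V_1=\langle\mathcal{C}_1\rangle$ from Lemma \ref{L;p>2,calculation}, show $V_0\cong S^{(n-m-r,1^r)}$ is a submodule, and exhibit the quotient as $\bigwedge^{r-1}S^{(n-m-1,1)}\cong S^{(n-m-r+1,1^{r-1})}$. The one place where you genuinely streamline the paper's argument is in verifying equivariance of the quotient map: the paper checks the single transposition $(n-1,n)$ by a two-case hand computation, whereas you observe the uniform identity $\sigma v_{m,1}=v_{m,1}-m f_{\sigma(n)}$ for every $\sigma\in\sym{n-m}$ and note that the correction term $-mf_{\sigma(n)}\wedge\sigma f_{\underline{\mathbf{j}}}$ lands in $V_0$ because all its wedge factors lie in $\langle f_{m+1},\dots,f_{n-1}\rangle$; this simultaneously explains the filtration when $p\nmid m$, the splitting when $p\mid m$ (the correction vanishes), and the $sgn$ outcome when $r=n-m$ (the correction wedges to zero against the top exterior power). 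One small point worth making explicit if you write this up fully: Lemma \ref{L;p>2,calculation} gives the \emph{inclusion} of the fixed space in $V_0+V_1$; you also need that every vector of $\mathcal{C}_0\cup\mathcal{C}_1$ is $\sym{m}$-fixed and that this set is linearly independent (as the paper records) before asserting the equality $\mathcal{F}_m(S^{(n-r,1^r)})=V_0\oplus V_1$ as vector spaces.
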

\begin{proof}
It is sufficient to determine $\mathcal{F}_m(\bigwedge^rS^{(n-1,1)})$. When $r<n-m$, define
\begin{align*}
\mathcal{C}_{0}=\{f_{\underline{\mathbf{i}}}:\ \underline{\mathbf{i}}\in J^+(m+1,r)\},\
\mathcal{C}_{1}=\{v_{m,1}\wedge f_{\underline{\mathbf{j}}}:\ \underline{\mathbf{j}}\in J^+(m+1,r-1)\}.
\end{align*}
Observe that the vectors of $\mathcal{C}_{0}\cup\mathcal{C}_{1}$ are all fixed by $\sym{m}$. Moreover, $\mathcal{C}_{0}\cup\mathcal{C}_{1}$ is linearly independent over $\F$. By Lemma \ref{L;p>2,calculation}, for any $v\in(\bigwedge^rS^{(n-1,1)})^{\sym{m}}$, $v=\overline{B_{0}^v}+\overline{B_{1}^v}$, $\overline{B_{0}^v}\in\langle \mathcal{C}_0\rangle$ and $\overline{B_{1}^v}=\sum_{\underline{\mathbf{j}}\in J^+(m+1,r-1)}k_{\underline{\mathbf{j}}}v_{m,1}\wedge f_{\underline{\mathbf{j}}},$ where $k_{\underline{\mathbf{j}}}\in \F$ for all $\underline{\mathbf{j}}\in J^+(m+1,r-1)$. We thus conclude that $\mathcal{C}_0\cup\mathcal{C}_1$ is a basis of $(\bigwedge^rS^{(n-1,1)})^{\sym{m}}$ by these facts. If $p\mid m$, $\mathcal{C}_{1}$ degenerates to be $\{(\sum_{i=1}^me_i)\wedge f_{\underline{\mathbf{j}}}:\ \underline{\mathbf{j}}\in J^+(m+1,r-1)\}$. Therefore, we get $\mathcal{F}_m(\bigwedge^rS^{(n-1,1)})=\langle \mathcal{C}_{0}\rangle\oplus\langle\mathcal{C}_{1}\rangle$, where $\langle \mathcal{C}_{0}\rangle\cong S^{(n-m-r,1^r)}$ and $\langle \mathcal{C}_{1}\rangle\cong S^{(n-m-r+1, 1^{r-1})}$ as $\F\sym{n-m}$-modules.
If $p\nmid m$, let $S=\mathcal{F}_m(\bigwedge^rS^{(n-1,1)})$ and $P=\langle\mathcal{C}_{0}\rangle$. Notice that $P\cong S^{(n-m-r,1^r)}$ as $\F\sym{n-m}$-modules. We need to show that $S/P\cong S^{(n-m-r+1, 1^{r-1})}$ as $\F\sym{n-m}$-modules. For any $v\in S$, let $\overline{v}$ be the image of $v$ under the natural map from $S$ to $S/P$. Define a linear map $\phi$ from $S/P$ to $\bigwedge^{r-1}S^{(n-m-1,1)}$ by sending each $\overline{v_{m,1} \wedge f_{\underline{\mathbf{j}}}}$ to $f_{\underline{\mathbf{j}}}$. It is obvious to see that $\phi$ is a linear isomorphism. To show that it is an $\F\sym{n-m}$-isomorphism, it is enough to check that the action of transposition $(n-1,n)$ is preserved by $\phi$. Set $\sigma=(n-1,n)$ and $s=\sum_{i=1}^me_i$. Note that $v_{m,1}=s-me_n$ by the definition. For any $v_{m,1}
\wedge f_{\underline{\mathbf{j}}}\in\mathcal{C}_1$ where $\underline{\mathbf{j}}=(j_1,\ldots,j_{r-1})$, we distinguish with two cases.
\begin{enumerate}[\text{Case} 1:]
\item $j_{r-1}<n-1.$
\end{enumerate}
We have
\begin{align*}
&\phi(\sigma(\overline{v_{m,1}\wedge f_{j_1}\wedge\cdots\wedge f_{j_{r-1}}}))\\
&=\phi(\overline{(s-me_{n-1})\wedge (e_{j_1}-e_{n-1})\wedge\cdots\wedge(e_{j_{r-1}}-e_{n-1})})\\
&=\phi(\overline{(s-me_n+me_n-me_{n-1})\wedge
(e_{j_1}-e_{n-1})\wedge\cdots\wedge(e_{j_{r-1}}-e_{n-1})})\\
&=\phi(\overline{(s-me_n)\wedge
(e_{j_1}-e_{n-1})\wedge\cdots\wedge(e_{j_{r-1}}-e_{n-1})})\ (\text{modulo}\ P)\\
&=(e_{j_1}-e_{n-1})\wedge\cdots\wedge (e_{j_{r-1}}-e_{n-1})\\
&=\sigma(f_{j_1}\wedge\cdots\wedge f_{j_{r-1}})\\
&=\sigma\phi(\overline{v_{m,1}\wedge f_{j_1}\wedge\cdots\wedge f_{j_{r-1}}}).
\end{align*}
\begin{enumerate}[\text{Case} 2:]
\item $j_{r-1}=n-1.$
\end{enumerate}
We have
\begin{align*}
&\phi(\sigma(\overline{v_{m,1}\wedge f_{j_1}\wedge\cdots\wedge f_{n-1}}))\\
&=\phi(\overline{(s-me_{n-1})\wedge (e_{j_1}-e_{n-1})\wedge\cdots\wedge(e_{n}-e_{n-1})})\\
&=\phi(\overline{(s-me_n+me_n-me_{n-1})\wedge(e_{j_1}-e_{n-1})\wedge\cdots\wedge(e_n-e_{n-1})})\\
&=\phi(\overline{(s-me_n)\wedge(e_{j_1}-e_{n-1})\wedge\cdots\wedge(e_{n}-e_{n-1})})\\
&=\phi(-\overline{(s-me_n)\wedge(e_{j_1}-e_{n})\wedge\cdots\wedge(e_{n-1}-e_{n})})\\
&=-\phi(\overline{(s-me_n)\wedge f_{j_1}\wedge\cdots\wedge f_{n-1}})\\
&=-f_{j_1}\wedge\cdots\wedge f_{n-1}\\
&=\sigma(f_{j_1}\wedge\cdots\wedge f_{n-1})\\
&=\sigma\phi(\overline{v_{m,1}\wedge f_{j_1}\wedge\cdots\wedge f_{n-1}}).
\end{align*}
So $\phi$ is an $\F\sym{n-m}$-isomorphism. We thus get that $N\sim S^{(n-m-r,1^r)}+S^{(n-m-r+1,1^{r-1})}$
as $\bigwedge^{r-1}S^{(n-m-1,1)}\cong S^{(n-m-r+1,1^{r-1})}$.

When $r=n-m$, by Lemma \ref{L;p>2,calculation} and a similar discussion as above, we obtain $(\bigwedge^rS^{(n-1,1)})^{\sym{m}}=\langle v_{m,1}\wedge f_{m+1}\wedge\cdots\wedge f_{n-1}\rangle$ and $\mathcal{F}_m(\bigwedge^rS^{(n-1,1)})\cong sgn$. When $r>n-m$, by Lemma \ref{L;p>2,calculation}, $\mathcal{F}_m(\bigwedge^rS^{(n-1,1)})=0$ as $(\bigwedge^rS^{(n-1,1)})^{\sym{m}}=0$. The proof is now complete.
\end{proof}
\begin{lem}\label{L;p=2,calculation}
Let $p=2$, $a,b\in \mathbb{N}$ and $a\leq b<n$. For any $s\in\{b+1,\ldots,n\}$, we have
\begin{align*}
\sum_{(i_1,\ldots,i_a)\in J(1,a,b)}(e_{i_1}+e_s)\wedge\cdots\wedge(e_{i_a}+e_s)=\begin{cases}
v_{b,a}+bf_s, &\text{if}\ a=1,\\
v_{b,a}+(b-a+1)f_s\wedge v_{b,a-1}, &\text{if}\ a>1.
\end{cases}
\end{align*}
\end{lem}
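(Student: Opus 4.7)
My plan is to turn the left-hand side into a wedge product of $f$-vectors (which is where $v_{b,a}$ and $f_s$ naturally live) and then expand. The key observation is that because $p=2$, the vectors $e_{i_k}+e_s$ can be rewritten as
\begin{equation*}
e_{i_k}+e_s=(e_{i_k}-e_n)+(e_s-e_n)=f_{i_k}+f_s,
\end{equation*}
where the identity $2e_n=0$ is used in an essential way. Therefore the left-hand side becomes
\begin{equation*}
\sum_{(i_1,\ldots,i_a)\in J(1,a,b)}(f_{i_1}+f_s)\wedge\cdots\wedge(f_{i_a}+f_s).
\end{equation*}

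Next I would expand each summand by multilinearity, indexing the resulting $2^a$ terms by subsets $T\subseteq\{1,\ldots,a\}$, where slots in $T$ contribute a factor $f_s$ and the other slots contribute $f_{i_k}$. Any $T$ with $|T|\ge 2$ contributes zero because $f_s\wedge f_s=0$, so only $T=\varnothing$ and singletons $T=\{k\}$ survive. Summed over $(i_1,\ldots,i_a)\in J(1,a,b)$, the $T=\varnothing$ terms assemble to $v_{b,a}$ by definition. This already proves the $a=1$ case, since each of the $b$ singletons $T=\{1\}$ contributes $f_s$, giving $v_{b,1}+bf_s$.

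For $a>1$, I would rewrite each singleton contribution $f_{i_1}\wedge\cdots\wedge f_{i_{k-1}}\wedge f_s\wedge f_{i_{k+1}}\wedge\cdots\wedge f_{i_a}$ as $f_s\wedge f_{i_1}\wedge\cdots\wedge\widehat{f_{i_k}}\wedge\cdots\wedge f_{i_a}$; the sign incurred by moving $f_s$ to the front is $\pm 1=1$ in characteristic two. Then I would reindex: for each $(a-1)$-tuple $(j_1,\ldots,j_{a-1})\in J(1,a-1,b)$, I count how many pairs $\bigl((i_1,\ldots,i_a),k\bigr)$ produce the wedge $f_s\wedge f_{j_1}\wedge\cdots\wedge f_{j_{a-1}}$. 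The removed entry $i_k$ may be any element of $\{1,\ldots,b\}\setminus\{j_1,\ldots,j_{a-1}\}$, and each such choice determines $(i_1,\ldots,i_a)$ and $k$ uniquely; there are $b-(a-1)=b-a+1$ choices. Summing over the $(a-1)$-tuples gives the contribution $(b-a+1)\,f_s\wedge v_{b,a-1}$, and adding in the $T=\varnothing$ piece yields the claimed formula.

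There is no real obstacle here beyond careful bookkeeping; the only subtlety is remembering that in characteristic two all signs trivialize, so both the change of variables $e_{i_k}+e_s=f_{i_k}+f_s$ and the reordering of the wedge to put $f_s$ in front are sign-free, which is exactly why the statement holds for $p=2$ rather than in general.
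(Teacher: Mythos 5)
Your proposal is correct and follows essentially the same route as the paper: rewrite $e_{i_k}+e_s=f_{i_k}+f_s$ using $p=2$, expand by multilinearity so that only the empty and singleton index sets survive, and then reindex the singleton terms over $J(1,a-1,b)$ to produce the factor $b-a+1$. The subset-indexing language is a cosmetic repackaging of the paper's double sum $\sum_{\underline{\mathbf{i}}}\sum_{\ell}$, and the counting argument is identical.
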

\begin{proof}
The case $a=1$ is trivial by a direct calculation. We now consider the case $a>1$. For any $(i_1,\ldots,i_\ell,\ldots,i_a)\in J(1,a,b)$, we shall write $f_{i_1}\wedge\cdots\wedge\widehat{f_{i_\ell}}\wedge\cdots\wedge f_{i_a}$ to denote the corresponding vector of $\bigwedge^{a-1}S^{(n-1,1)}$ without the component $f_{i_\ell}$. Since $p=2$, we have
\begin{align*}
&\sum_{\substack{(i_1,\ldots,i_a)\in J(1,a,b)}}\!(e_{i_1}+e_s)\wedge\cdots\wedge(e_{i_a}+e_s)\\
&=\sum_{\substack{(i_1,\ldots,i_a)\in J(1,a,b)}}\!\!\!\!\!\!\!\!(f_{i_1}+f_s)\wedge\cdots\wedge(f_{i_a}+f_s)\\
&=v_{b,a}+f_s\wedge(\!\!\!\!\!\!\!\!\!\sum_{\substack{(i_1,\ldots,i_a)\in J(1,a,b)}}\!\sum_{c=1}^a
f_{i_1}\wedge\cdots\wedge\widehat{f_{i_c}}\wedge\cdots\wedge f_{i_a})\\
&=v_{b,a}+(b-a+1)f_s\wedge(\sum_{\underline{\mathbf{j}}\in J(1,a-1,b)}
\!\!\!f_{\underline{\mathbf{j}}})\\
&=v_{b,a}+(b-a+1)f_s\wedge v_{b,a-1},
\end{align*}
as desired.
\end{proof}
\begin{lem}\label{L;p=2,calculation1}
Let $p=2$ and $s=r+m-n+1$. If $r>1$, for any $v\in(\bigwedge^rS^{(n-1,1)})^{\sym{m}}$, we have
\[v=\begin{cases} \sum_{i=0}^r\overline{B_{i}^v}, & \text{if}\ r<n-m,\ r\leq m,\\
\sum_{i=0}^m\overline{B_{i}^v}, & \text{if}\ m<r<n-m,\\
\sum_{i=s}^r\overline{B_{i}^v}, & \text{if}\ n-m\leq r\leq m,\\
\sum_{i=s}^m\overline{B_{i}^v}, & \text{if}\ n-m\leq r,\ r>m,\end{cases}\]
where, for all $0<i<r$, $\overline{B_{i}^v}=
\sum_{\underline{\mathbf{i}}\in J^+(m+1,r-i)}k_{\underline{\mathbf{i}}}v_{m,i}\wedge f_{\underline{\mathbf{i}}}$ and $k_{\underline{\mathbf{i}}}\in\F$ for all $\underline{\mathbf{i}}\in J^+(m+1,r-i)$. Furthermore, $\overline{B_{r}^v}\in\langle v_{m,r}\rangle$.
\end{lem}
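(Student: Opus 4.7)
The plan is to adapt the strategy of Lemma \ref{L;p>2,calculation} to characteristic $2$, exploiting the key fact that, unlike for $p>2$, the elements $v_{m,\ell}$ become $\sym{m}$-fixed when $p=2$. First I would observe that as $\F\sym{m}$-modules, $S^{(n-1,1)}\downarrow_{\sym{m}} = U \oplus V$, where $U = \langle\{f_1,\ldots,f_m\}\rangle$ (on which $\sym{m}$ permutes the basis, so $U \cong A_m$) and $V = \langle\{f_{m+1},\ldots,f_{n-1}\}\rangle$ (on which $\sym{m}$ acts trivially). Taking exterior powers then yields
$$\bigwedge^r S^{(n-1,1)}\downarrow_{\sym{m}} \;\cong\; \bigoplus_{\ell=0}^r \bigwedge^\ell U \wedge \bigwedge^{r-\ell} V,$$
and this splitting is exactly the decomposition $v = \sum_\ell \overline{B_\ell^v}$, since $f_{\underline{\mathbf{i}}}\in B_\ell$ precisely when the tensor factor in $\bigwedge^\ell U$ is nontrivial and the remaining $r-\ell$ indices lie in $\{m+1,\ldots,n-1\}$.

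Since $\bigwedge^{r-\ell}V$ is $\sym{m}$-trivial and $\overline{B_\ell^v}$ is $\sym{m}$-fixed by the final remark of Notation \ref{N;Notation1}(iii), one concludes $\overline{B_\ell^v} \in (\bigwedge^\ell U)^{\sym{m}} \otimes \bigwedge^{r-\ell}V$. The remaining task is therefore to compute $(\bigwedge^\ell A_m)^{\sym{m}}$ when $p=2$. The coefficient-comparison argument used in the first half of Lemma \ref{L;Afixedpoints} still shows that this fixed space is contained in $\langle v_{m,\ell}\rangle$; but now, because all permutation signs are trivial modulo $2$, the element $v_{m,\ell}$ is itself genuinely $\sym{m}$-fixed, so $(\bigwedge^\ell A_m)^{\sym{m}} = \langle v_{m,\ell}\rangle$. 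Plugging this back, and using that $\bigwedge^{r-\ell}V$ has basis $\{f_{\underline{\mathbf{i}}} : \underline{\mathbf{i}} \in J^+(m+1,r-\ell)\}$, gives, for $0<\ell<r$,
$$\overline{B_\ell^v} = \sum_{\underline{\mathbf{i}} \in J^+(m+1, r-\ell)} k_{\underline{\mathbf{i}}}\, v_{m,\ell} \wedge f_{\underline{\mathbf{i}}};$$
for $\ell = r$ the wedge has empty second factor, so $\overline{B_r^v} \in \langle v_{m,r}\rangle$ directly.

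To recover the four cases of the statement, I would then invoke (4.1) of Notation \ref{N;Notation1}(iii): $B_\ell$ is nonempty precisely when $s \leq \ell \leq m$, where $s = r+m-n+1$. Combined with the trivial bound $\ell \leq r$, this gives the admissible range $\max(0,s) \leq \ell \leq \min(m,r)$. Whether the lower bound is $0$ or $s$ is controlled by the comparison of $r$ with $n-m$, and whether the upper bound is $r$ or $m$ by the comparison of $r$ with $m$; the four combinations match the four cases listed. I do not foresee any serious obstacle: the only real content is the observation that $v_{m,\ell}$ is $\sym{m}$-fixed in characteristic $2$, which is exactly what separates this lemma from its odd-characteristic analogue and forces the four-case presentation.
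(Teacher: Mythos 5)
Your argument is correct and essentially matches the paper's: both decompose $v$ into the pieces $\overline{B_{\ell}^v}$, determine which $B_\ell$ are nonempty via $(4.1)$, and identify the $\sym{m}$-fixed vectors in $\langle B_\ell\rangle$ as being of the form $v_{m,\ell}\wedge(\cdot)$. You make explicit what the paper compresses into ``an easy calculation gives'' — namely the $U\oplus V$ splitting of $S^{(n-1,1)}{\downarrow_{\sym{m}}}$ and the computation $(\bigwedge^\ell A_m)^{\sym{m}}=\langle v_{m,\ell}\rangle$ when $p=2$ — but the substance is the same.
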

\begin{proof}
Recall that $v=\sum_{i=0}^r\overline{B_{i}^v}$. Moreover, we have $\overline{B_{i}^v}\in(\bigwedge^rS^{(n-1,1)})^{\sym{m}}$ for all $0\leq i\leq r$ as $v\in(\bigwedge^rS^{(n-1,1)})^{\sym{m}}$. If $B_i\neq \varnothing$, an easy calculation gives that
\begin{equation*}
\overline{B_{i}^v}\in\begin{cases} \langle v_{m,i}\wedge w_{i}\rangle, & \text{if}\ 0<i<r,\\
 \langle v_{m,r}\rangle, & \text{if}\ i=r,\end{cases}
\end{equation*}
where $w_i\in\langle\{f_{\underline{\mathbf{i}}}:\ \underline{\mathbf{i}}\in J^+(m+1,r-i)\}\rangle$.
When $m<r$, by $(4.1)$, note that $B_{i}=\varnothing$ for all $m<i\leq r$. When $n-m\leq r$, notice that $0<s$. Therefore, by $(4.1)$ again, $B_{i}=\varnothing$ for all $0\leq i<s$. We now get the corresponding results according to these facts and the given conditions. The lemma follows.
\end{proof}
\begin{lem}\label{L;F(S),p=2}
Let $p=2$ and $s=r+m-n+1$. If $r>1$, then we have
\[\mathcal{F}_m(S^{(n-r,1^r)})\sim\begin{cases} \sum_{i=0}^rS^{(n-m-r+i,1^{r-i})}, & \text{if}\ r<n-m,\ r\leq m,\\
\sum_{i=0}^mS^{(n-m-r+i,1^{r-i})}, & \text{if}\ m<r<n-m,\\
\sum_{i=s}^rS^{(n-m-r+i,1^{r-i})}, & \text{if}\ n-m\leq r\leq m,\\
\sum_{i=s}^mS^{(n-m-r+i,1^{r-i})}, & \text{if}\ n-m\leq r,\ r>m.\end{cases}\]
\end{lem}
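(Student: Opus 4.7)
The plan is to build an explicit $\F\sym{n-m}$-stable filtration of $\mathcal{F}_m(S^{(n-r,1^r)})\cong\mathcal{F}_m(\bigwedge^r S^{(n-1,1)})$ whose layers are indexed by the parameter $\ell$ of Lemma \ref{L;p=2,calculation1}, and then to identify each successive quotient as one of the Specht modules appearing in the statement. Lemma \ref{L;p=2,calculation1} already presents the fixed-point space as spanned by vectors $v_{m,\ell}\wedge f_{\underline{\mathbf{i}}}$ with $\underline{\mathbf{i}}\in J^+(m+1,r-\ell)$ for $\ell$ in one of the four prescribed ranges (with the convention $v_{m,0}\wedge f_{\underline{\mathbf{i}}}=f_{\underline{\mathbf{i}}}$ and $v_{m,r}\wedge f_{\underline{\mathbf{i}}}=v_{m,r}$ at the extremes). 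Expanding $v_{m,\ell}$ in the basis $\B_S^r$ shows that vectors with different $\ell$ use basis elements with a different number of indices in $\mathbf{m}$, and for a fixed $\ell$ different $\underline{\mathbf{i}}$'s use disjoint complementary indices in $\{m+1,\ldots,n-1\}$, so these vectors are linearly independent and therefore form a basis of $\mathcal{F}_m(\bigwedge^r S^{(n-1,1)})$.

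For each admissible $\ell$, set $W_{\ell}=\langle v_{m,j}\wedge f_{\underline{\mathbf{i}}}:j\le\ell,\ \underline{\mathbf{i}}\in J^+(m+1,r-j)\rangle$. The claim is that the chain $\{W_\ell\}$ consists of $\F\sym{n-m}$-submodules. Elements of $\sym{n-m}$ fixing $n$ fix every $f_a=e_a-e_n$ with $a\le m$, hence fix $v_{m,j}$, while permuting the remaining $f$'s among themselves; so they preserve the degree $j$. For a transposition $g=(t,n)$ with $t\in\{m+1,\ldots,n-1\}$ one has $g\cdot f_a=e_a+e_t$ in characteristic $2$ whenever $a\le m$, and applying Lemma \ref{L;p=2,calculation} with $b=m$, $a=j$ (identifying the lemma's $s$ with $t$) gives
\[
g\cdot v_{m,j}\;=\;v_{m,j}\;+\;(m-j+1)\,f_t\wedge v_{m,j-1}.
\]
Consequently $g\cdot(v_{m,j}\wedge f_{\underline{\mathbf{i}}})$ splits into a degree-$j$ piece $v_{m,j}\wedge(g\cdot f_{\underline{\mathbf{i}}})$ plus a degree-$(j-1)$ piece $(m-j+1)\,v_{m,j-1}\wedge(f_t\wedge g\cdot f_{\underline{\mathbf{i}}})$ (the sign from commuting $f_t$ past $v_{m,j-1}$ collapses in characteristic $2$); both lie in $W_\ell$. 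Since such transpositions together with the stabilizer of $n$ generate $\sym{n-m}$, each $W_\ell$ is indeed $\F\sym{n-m}$-stable.

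Finally, modulo $W_{\ell-1}$ the lower-degree correction vanishes, so the action descends to $g\cdot(v_{m,\ell}\wedge f_{\underline{\mathbf{i}}})\equiv v_{m,\ell}\wedge(g\cdot f_{\underline{\mathbf{i}}})$; this matches the natural action of $\sym{n-m}$ on $\bigwedge^{r-\ell}S^{(n-m-1,1)}$, and the map $v_{m,\ell}\wedge f_{\underline{\mathbf{i}}}\mapsto f_{\underline{\mathbf{i}}}$ then furnishes an $\F\sym{n-m}$-isomorphism $W_{\ell}/W_{\ell-1}\cong \bigwedge^{r-\ell}S^{(n-m-1,1)}\cong S^{(n-m-r+\ell,1^{r-\ell})}$. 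Substituting the admissible range of $\ell$ from Lemma \ref{L;p=2,calculation1} (namely $\{0,\ldots,r\}$, $\{0,\ldots,m\}$, $\{s,\ldots,r\}$, $\{s,\ldots,m\}$ in the four cases) yields the four filtrations in the statement.

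The main obstacle is the degree-shift calculation for $g=(t,n)$: one must use Lemma \ref{L;p=2,calculation} together with the sign-collapse in characteristic $2$ to guarantee that no component of degree strictly greater than $j$ is produced from $v_{m,j}\wedge f_{\underline{\mathbf{i}}}$, so the filtration really is increasing in $\ell$. At the lower boundary $\ell=s$ when $r\ge n-m$ one also has to invoke the antisymmetry relation $f_t\wedge f_{m+1}\wedge\cdots\wedge f_{n-1}=0$ to see that the degree-$(s-1)$ correction vanishes outright, so that the one-dimensional bottom layer is already an $\F\sym{n-m}$-submodule isomorphic to $S^{(1^{n-m})}$.
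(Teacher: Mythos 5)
Your proposal is correct and follows essentially the same route as the paper: both construct the increasing filtration by ``degree in $\mathbf{m}$'' using the spanning set from Lemma \ref{L;p=2,calculation1}, verify $\F\sym{n-m}$-stability of each layer via the degree-shift identity of Lemma \ref{L;p=2,calculation} applied to a transposition moving $n$, and identify each successive quotient with $\bigwedge^{r-\ell}S^{(n-m-1,1)}\cong S^{(n-m-r+\ell,1^{r-\ell})}$ by the obvious map. The only cosmetic difference is that the paper checks stability using only $(n-1,n)$ (which suffices since it, together with the stabilizer of $n$, generates $\sym{n-m}$), whereas you allow arbitrary $(t,n)$; your explicit remark about the bottom layer $\ell=s$ in the cases $n-m\le r$ addresses a point the paper leaves implicit.
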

\begin{proof}
It is sufficient to work on $\mathcal{F}_m(\bigwedge^rS^{(n-1,1)})$. Let $\ell\in \mathbb{N}$ and $\ell<r$. When $J^+(m+1,r)$, $J^+(m+1,r-\ell)\neq\varnothing$ and $\ell\leq m$, define
\begin{align*}
\mathcal{C}_{0}=\{f_{\underline{\mathbf{i}}}:\ \underline{\mathbf{i}}\in J^+(m+1,r)\},\
\mathcal{C}_{\ell}=\{v_{m,\ell}\wedge f_{\underline{\mathbf{j}}}:\ \underline{\mathbf{j}}\in J^+(m+1,r-\ell)\}.
\end{align*}
Set $\mathcal{C}_{\ell}=\varnothing$ if $J^+(m+1,r-\ell)=\varnothing$ or $m<\ell$ and put $\mathcal{C}_{i,j}=\biguplus_{k=i}^j\mathcal{C}_{k}$ for all $1\leq i\leq j<r$. When $r<n-m$ and $r\leq m$, define $\mathcal{C}=\{v_{m,r}\}\cup\mathcal{C}_{0}\cup\mathcal{C}_{1,r-1}$. Note that each vector of $\mathcal{C}$ is fixed by $\sym{m}$. Moreover, $\mathcal{C}$ is linearly independent over $\F$. By Lemma \ref{L;p=2,calculation1}, for any $v\in(\bigwedge^rS^{(n-1,1)})^{\sym{m}}$, $v=\sum_{a=0}^r\overline{B_{a}^v}$, $\overline{B_{0}^v}\in\langle \mathcal{C}_0\rangle$, $\overline{B_{r}^v}\in\langle v_{m,r}\rangle$, $ \overline{B_{a}^v}=\sum_{\underline{\mathbf{j}}\in J^+(m+1,r-a)}k_{\underline{\mathbf{j}}}v_{m,a}\wedge f_{\underline{\mathbf{j}}},$
where $1\leq a<r$ and $k_{\underline{\mathbf{j}}}\in\F$ for all $\underline{\mathbf{j}}\in J^+(m+1,r-a)$. By these facts, we conclude that $\mathcal{C}$ is a basis of $(\bigwedge^rS^{(n-1,1)})^{\sym{m}}$.

We shall construct the desired Specht filtration for $\mathcal{F}_m(\bigwedge^rS^{(n-1,1)})$. Let $U_0=\langle \mathcal{C}_{0}\rangle$ and $U_i=\langle
\mathcal{C}_{0}\cup\mathcal{C}_{1,i}\rangle$ for all $1\leq i<r$. Set $U_r=\mathcal{F}_m(\bigwedge^rS^{(n-1,1)})$. We have a chain of subspaces of
$\mathcal{F}_m(\bigwedge^rS^{(n-1,1)})$ given by
\begin{equation*}
0\subset U_0\subset U_1\subset\cdots\subset U_r.
\end{equation*}
By the definition of $U_0$, note that $U_0\cong S^{(n-m-r,1^r)}$ as $\F\sym{n-m}$-modules. We claim that $U_i$ is an $\F\sym{n-m}$-module for all $1\leq i<r$. According to the definition of $U_i$, it is enough to check that every element of $\mathcal{C}_{1,i}$ is still in $U_i$ under the action of the transposition $(n-1,n)$. Write $\sigma=(n-1,n)$. For any $v_{m,\ell}\wedge f_{\underline{\mathbf{j}}}\in\mathcal{C}_{1,i}$, note that $1\leq\ell\leq i<r\leq m$. If $\ell>1$, we have the following calculation:
\begin{align*}
\sigma(v_{m,\ell}\wedge f_{\underline{\mathbf{j}}})=\sigma v_{m,\ell}\wedge \sigma f_{\underline{\mathbf{j}}}
=(v_{m,\ell}+(m-\ell+1)f_{n-1}\wedge v_{m,\ell-1})\wedge \sigma f_{\underline{\mathbf{j}}}\in U_i,
\end{align*}
where the last equality is from Lemma \ref{L;p=2,calculation}. Similarly, one can finish the checking if $\ell=1$. The claim is shown. Finally, we will prove that $U_i/U_{i-1}\cong \bigwedge^{r-i}S^{(n-m-1,1)}$ as $\F\sym{n-m}$-modules for all $1\leq i\leq r$. For any $v\in U_i$, write $\overline{v}$ to denote the image of $v$ under the natural map from $U_i$ to $U_i/U_{i-1}$. When $1\leq i<r$, define a linear map $\phi_i$ from $U_i/U_{i-1}$ to $\bigwedge^{r-i}S^{(n-m-1,1)}$ by sending each $\overline{v_{m,i}
\wedge f_{\underline{\mathbf{j}}}}$ to $f_{\underline{\mathbf{j}}}$. It is easy to observe that $\phi_i$ is a linear isomorphism. To show that $\phi_i$ is an $\F\sym{n-m}$-isomorphism, we only need to check that the action of $\sigma$ is preserved by $\phi_i$. If $i=1$, one may develop a similar checking as the one given in Lemma \ref{L;F(S),p>2}. If $1<i<r$, one can use the calculation of checking that $U_i$ is an $\F\sym{n-m}$-submodule to finish the checking. Therefore, $\phi_i$ is an $\F\sym{n-m}$-isomorphism for all $1\leq i<r$. Define a linear map $\phi_r$ from $U_r/U_{r-1}$ to $\F$ by sending $\overline{v_{m,r}}$ to a generator of $\F$. By Lemma \ref{L;p=2,calculation}, it is obvious to see that $\phi_r$ is an $\F\sym{n-m}$-isomorphism. We now have shown that $\mathcal{F}_m(\bigwedge^rS^{(n-1,1)})$ has the desired Specht filtration as $\bigwedge^{r-i}S^{(n-m-1,1)}\cong S^{(n-m-r+i,1^{r-i})}$ for all $1\leq i\leq r$.

The proofs of all the other cases are similar to the one of the case $r<n-m$ and $r\leq m$. We only list corresponding assertions for each case. One may follow the proof presented above to justify them.
When $m<r<n-m$, $(\bigwedge^rS^{(n-1,1)})^{\sym{m}}$ has a basis $\mathcal{C}_{0}\cup\mathcal{C}_{1,m}$. Let $U_0=\langle \mathcal{C}_{0}\rangle$, $U_i=\langle\mathcal{C}_{0}\cup\mathcal{C}_{1,i}\rangle$ for all $1\leq i<m$ and $U_m=\mathcal{F}_m(\bigwedge^rS^{(n-1,1)})$. Then the chain of the $\F\sym{n-m}$-modules
$$ 0\subset U_0\subset U_1\subset\cdots\subset U_m$$
gives the desired Specht filtration to $\mathcal{F}_m(\bigwedge^rS^{(n-1,1)})$. When $n-m\leq r\leq m<n-1$, $(\bigwedge^rS^{(n-1,1)})^{\sym{m}}$ has a basis $\{v_{m,r}\}\cup\mathcal{C}_{s,r-1}$.
Let $U_i=\langle\mathcal{C}_{s,i}\rangle$ for all $s\leq i<r$ and
$U_{r}=\mathcal{F}_m(\bigwedge^rS^{(n-1,1)})$. Then the chain of the $\F\sym{n-m}$-modules
$$ 0\subset U_s\subset U_{s+1}\subset\cdots\subset U_r$$
provides the desired Specht filtration for $\mathcal{F}_m(\bigwedge^rS^{(n-1,1)})$. When $m=n-1$, $(\bigwedge^rS^{(n-1,1)})^{\sym{m}}$ has a basis $\{v_{m,r}\}$. So $\mathcal{F}_m(\bigwedge^rS^{(n-1,1)})\cong S^{(1)}$. When $n-m\leq r$ and $r>m$, $(\bigwedge^rS^{(n-1,1)})^{\sym{m}}$ has a basis $\mathcal{C}_{s,m}$.
Set $U_i=\langle\mathcal{C}_{s,i}\rangle$ for all $s\leq i<m$ and $U_m=\mathcal{F}_m(\bigwedge^rS^{(n-1,1)})$. Then the chain of the $\F\sym{n-m}$-modules
$$ 0\subset U_s\subset U_{s+1}\subset\cdots\subset U_m$$
induces the desired Specht filtration of $\mathcal{F}_m(\bigwedge^rS^{(n-1,1)})$. The proof is now complete.
\end{proof}
A direct corollary of Lemmas \ref{L;S(n-1,1)}, \ref{L;F(S),p>2} and \ref{L;F(S),p=2} is given as follows.

\begin{cor}\label{C;Hook filtration}
Let $a$ and $b$ be integers such that $0\leq a<n$ and $1\leq b\leq n$. Then the following assertions hold.
\begin{enumerate}
\item [\em(i)] If $\mathcal{F}_b(S^{(n-a,1^a)})\neq 0$, then there exists a Specht filtration of $\mathcal{F}_b(S^{(n-a,1^a)})$ such that all the successive quotient factors of the filtration are labelled by the hook partitions of $n-b$.
\item [\em(ii)] Let $M$ be an $\F\sym{n}$-module with a Specht filtration and $\mathcal{F}_b(M)\neq 0$. If $b<p$ and all the successive quotient factors of the filtration are labelled by hook partitions of $n$, then $\mathcal{F}_b(M)$ has a Specht filtration with all successive quotient factors labelled by hook partitions of $n-b$.
\end{enumerate}
\end{cor}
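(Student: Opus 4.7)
The plan is to reduce both parts to the explicit descriptions already obtained in Lemmas \ref{L;S(n-1,1)}, \ref{L;F(S),p>2}, and \ref{L;F(S),p=2}; no genuinely new computation should be needed.

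For part (i), I would split on the value of $a$. If $a=0$ then $S^{(n)}\cong \F$ and $\mathcal{F}_b(\F)\cong \F \cong S^{(n-b)}$, already a hook Specht module. If $a=1$, Lemma \ref{L;S(n-1,1)} leaves three possibilities for $\mathcal{F}_b(S^{(n-1,1)})$: $S^{(n-b-1,1)}\oplus \F$, $M^{(n-b-1,1)}$, or $\F$. The only one not already a direct sum of Specht modules is $M^{(n-b-1,1)}$, which carries the standard short exact sequence $0 \to S^{(n-b-1,1)} \to M^{(n-b-1,1)} \to S^{(n-b)} \to 0$, producing a hook Specht filtration. For $a>1$, I would invoke Lemma \ref{L;F(S),p>2} when $p>2$ and Lemma \ref{L;F(S),p=2} when $p=2$; inspection of those statements shows that every nonzero output is given (or shown) to have a Specht filtration whose successive quotients are of the form $S^{(n-b-a+i,1^{a-i})}$ or the sign module $\sgn \cong S^{(1^{n-b})}$, all of which are hook partitions of $n-b$.

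For part (ii), I would use that $\mathcal{F}_b$ is exact whenever $b<p$. Starting from a Specht filtration
\[
0 = M_0 \subset M_1 \subset \cdots \subset M_k = M
\]
whose successive quotients $M_j/M_{j-1}\cong S^{\mu_j}$ are hook Specht modules of $n$, exactness yields a filtration $\mathcal{F}_b(M_0) \subseteq \mathcal{F}_b(M_1) \subseteq \cdots \subseteq \mathcal{F}_b(M_k) = \mathcal{F}_b(M)$ with successive quotients isomorphic to $\mathcal{F}_b(S^{\mu_j})$. Omitting those indices for which the quotient vanishes, and refining each remaining quotient by the hook Specht filtration supplied by (i), produces the required filtration of $\mathcal{F}_b(M)$ by Specht modules labelled by hook partitions of $n-b$.

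The main obstacle is essentially absent at this stage: all the substantive work is already done in the preceding three lemmas. The only remaining task is bookkeeping, namely checking that in every nonzero output appearing in those lemmas the Specht label is a hook of $n-b$, which can be read off directly from the explicit expressions.
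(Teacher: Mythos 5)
Your proposal follows essentially the same route as the paper: part (i) by reading off Lemmas \ref{L;S(n-1,1)}, \ref{L;F(S),p>2}, \ref{L;F(S),p=2} (together with the short exact sequence for $M^{(n-b-1,1)}$ and the identification $\sgn\cong S^{(1^{n-b})}$), and part (ii) by exactness of $\mathcal{F}_b$ for $b<p$ plus refining the resulting filtration via (i). One small point you gloss over: those three lemmas are stated only for $1\le m<n$, so they do not directly cover the boundary case $b=n$, which the corollary allows; the paper dispenses with it separately by observing that $\mathcal{F}_n$ sends any module to zero or a sum of trivial $\F\sym{0}$-modules, i.e. copies of $S^{\varnothing}$.
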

\begin{proof}
Note that $\mathcal{F}_n$ takes an $\F\sym{n}$-module to zero or a direct sum of trivial modules. Also notice that $\mathcal{F}_b(S^{(n)})$ is a trivial module. (i) is clear by Lemmas \ref{L;S(n-1,1)}, \ref{L;F(S),p>2} and \ref{L;F(S),p=2}. For (ii), as $\mathcal{F}_b$ is now exact, we get the desired result by (i) and induction on the number of quotient factors of the filtration.
\end{proof}

One more result is needed to prove Theorem \ref{T;A}. It is a part of \cite[Theorem 4.5]{Lim2}. To state it and finish the proof of Theorem \ref{T;A}, we need another notation. Let $a\in \mathbb{N}$ and $ ap\leq n$. Let $E_a$ be the elementary abelian $p$-subgroup $\langle \bigcup_{i=a}^b\{s_i\}\rangle$ of $\sym{n}$ where $b=\lfloor\frac{n}{p}\rfloor$ and $s_i=((i-1)p+1,(i-1)p+2,\ldots, ip).$
\begin{lem}\label{L;Hook sjt}
Let $p\mid n$ and $a$ be the remainder of $r$ when $r$ is divided by $p$. If $2\nmid a$, then the stable generic Jordan type of $S^{(n-r,1^r)}{\downarrow_{E_1}}$ is $[p-1]^b$, where $b\in \mathbb{N}$.
\end{lem}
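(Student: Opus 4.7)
Since $S^{(n-r,1^r)}\cong \bigwedge^r S^{(n-1,1)}$ by the cited result of M\"uller, I would first determine the generic Jordan type of $S^{(n-1,1)}\downarrow_{E_1}$. The module $A_n\downarrow_{E_1}$ decomposes as $\bigoplus_{k=1}^{n/p}V_k$, where $V_k=\langle e_{(k-1)p+1},\ldots,e_{kp}\rangle$ is the regular representation of $\langle s_k\rangle$; as a $\K\langle u_\alpha\rangle$-module each $V_k$ is a single Jordan block $[p]$. Using the augmentation sequence $0\to S^{(n-1,1)}\to A_n\to \F\to 0$ together with the hypothesis $p\mid n$ (which places $\sum_i e_i$, the sum of the socle vectors of the $V_k$, inside $S^{(n-1,1)}$), a rank count for $(u_\alpha-1)^i$ on $S^{(n-1,1)}$ for $0\le i\le p$ yields the generic Jordan type $[p]^{n/p-1}\oplus [p-1]$. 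I then write $S^{(n-1,1)}\downarrow_{u_\alpha}=F\oplus J$ with $F=[p]^{n/p-1}$ projective and $J\cong [p-1]$.

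Next, I expand $\bigwedge^r(F\oplus J)=\bigoplus_{i+j=r}\bigwedge^i F\otimes \bigwedge^j J$. The key input is that $\bigwedge^k[p]$ is projective for $1\le k\le p-1$, because it is a direct summand of $[p]^{\otimes k}$ (valid since $k!$ is invertible mod $p$) and the shearing isomorphism $\K C_p\otimes M\cong (\K C_p)^{\dim M}$ renders $[p]\otimes M$ free for any $M$. Expanding $\bigwedge^i F$ over the $n/p-1$ copies of $[p]$ inside $F$, every summand $\bigwedge^{i_1}[p]\otimes\cdots\otimes \bigwedge^{i_{n/p-1}}[p]$ is projective unless each $i_k\in\{0,p\}$, in which case it equals the trivial module $[1]$; there are $\binom{n/p-1}{j}$ such exceptional summands when $i=jp$. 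Writing $r=qp+a$ with $0\le a<p$, this yields in the stable module category of $\K\langle u_\alpha\rangle$,
\[
 S^{(n-r,1^r)}\downarrow_{u_\alpha}\ \equiv\ \binom{n/p-1}{q}\bigwedge^a[p-1].
\]

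It remains to show, for odd $a$ with $1\le a\le p-1$, that $\bigwedge^a[p-1]$ has stable Jordan type $[p-1]$. I realize $[p-1]\cong S^{(p-1,1)}\downarrow_v$ for $v=1+\alpha(s_1-1)\in \K\sym{p}$, whence $\bigwedge^a[p-1]\cong S^{(p-a,1^a)}\downarrow_v$. Applying the previous paragraph with $n$ replaced by $p$, the short exact sequence
\[
 0\to S^{(p-r',1^{r'})}\to \bigwedge^{r'}A_p\to S^{(p-r'+1,1^{r'-1})}\to 0
\]
has projective middle term for every $1\le r'\le p-1$, so $S^{(p-r',1^{r'})}\cong \Omega\, S^{(p-r'+1,1^{r'-1})}$ in the stable category. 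Iterating $a$ steps from $S^{(p)}=[1]$ gives $S^{(p-a,1^a)}\cong \Omega^a[1]$ stably; since $\Omega[1]=[p-1]$ and $\Omega^2=\mathrm{id}$ on the stable module category of $C_p$ in characteristic $p$, for odd $a$ this equals $[p-1]$. Combining, $S^{(n-r,1^r)}\downarrow_{E_1}$ has stable generic Jordan type $[p-1]^b$ with $b=\binom{n/p-1}{q}$. The main difficulty is establishing the projectivity of $\bigwedge^k[p]$ for $1\le k\le p-1$; once that is in hand, the rest reduces to careful bookkeeping of Heller translates and binomial coefficients.
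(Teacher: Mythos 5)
The paper offers no proof of this lemma: it is quoted verbatim as part of Lim's Theorem~4.5 (reference \cite{Lim2}), so there is no in-paper argument to compare against, and your proposal is a self-contained replacement. Having checked it, I believe it is correct. The hypothesis $p\mid n$ enters exactly where you use it: it makes $A_n\downarrow_{E_1}$ free of rank $n/p$ at the generic point $u_\alpha$, so there is no trivial summand to split the augmentation sequence, and Schanuel's lemma gives $S^{(n-1,1)}\downarrow_{u_\alpha}\cong[p-1]\oplus[p]^{n/p-1}$. Your two key module-theoretic inputs are both right: $\bigwedge^k[p]$ is free for $1\le k\le p-1$ (antisymmetrizer idempotent plus the shearing isomorphism $\K C_p\otimes M\cong(\K C_p)^{\dim M}$), and $\bigwedge^0[p]=\bigwedge^p[p]=[1]$ (the top exterior power is the determinant, which is trivial for a $p$-cycle in characteristic $p$ since all eigenvalues are $1$ — this second point should probably be said explicitly rather than left implicit). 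The stable reduction of $\bigwedge^r(F\oplus J)$ to $\binom{n/p-1}{q}\bigwedge^a[p-1]$ then follows because the constraints $0\le j\le p-1$ and $p\mid i$ force $i=qp$, $j=a$. The identification $\bigwedge^a[p-1]\equiv\Omega^a[1]$ via the Koszul short exact sequences
\[
0\longrightarrow S^{(p-r',1^{r'})}\longrightarrow \bigwedge^{r'}A_p\longrightarrow S^{(p-r'+1,1^{r'-1})}\longrightarrow 0
\]
with projective middle term, together with the period-two Heller operator on $\K C_p$, is a clean way to finish. One thing worth adding: $b=\binom{n/p-1}{q}$ is positive, because $r<n$ and $a\ge 1$ force $q\le n/p-1$; the lemma is only useful in the paper when $b>0$, so this deserves a sentence. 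Methodologically this is almost surely close to Lim's own argument, which likewise computes generic Jordan types of hook Specht modules through $\bigwedge^r S^{(n-1,1)}$, so I would not expect the two routes to differ substantively.
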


We now begin to finish the proof of Theorem \ref{T;A}.
\begin{prop}\label{P;Hook n>p}
Let $n>p>2$. Then $S^{(n-r,1^r)}$ is a trivial source module if and only if $(n-r,1^r)\in JM(n)_p$.
\end{prop}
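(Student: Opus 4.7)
The backward direction is immediate from Theorem \ref{T;Hemmer}, since simple Specht modules are trivial source. For the forward direction, suppose $S^{(n-r,1^r)}$ is trivial source. The plan is to prove $(n-r,1^r) \in JM(n)_p$ by induction on $n$. The cases $r = 0$ and $r = n-1$ give the trivial and sign module respectively, both simple, so assume $1 \leq r \leq n - 2$. Since the conjugate $(n-r, 1^r)' = (r+1, 1^{n-r-1})$ is again a hook, Lemma \ref{L;p-permutation modules}(iii) permits the further reduction $r \leq (n-1)/2$, under which $r < n - p$ whenever $n \geq 2p$.

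For the inductive step $n \geq 2p+1$, apply the fixed-point functor $\mathcal{F}_p$. Since $p \mid p$ and $r < n - p$, Lemmas \ref{L;S(n-1,1)} (for $r = 1$) and \ref{L;F(S),p>2} (for $r > 1$) give the direct sum decompositions
\[
\mathcal{F}_p(S^{(n-1, 1)}) \cong S^{(n-p-1, 1)} \oplus \F, \qquad \mathcal{F}_p(S^{(n-r, 1^r)}) \cong S^{(n-p-r, 1^r)} \oplus S^{(n-p-r+1, 1^{r-1})}.
\]
Corollary \ref{L;permutation} makes each left-hand side a $p$-permutation $\F\sym{n-p}$-module, and since Specht modules are indecomposable in odd characteristic, Krull--Schmidt forces each Specht summand to be trivial source. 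Because $n - p > p$, the inductive hypothesis places each Specht summand in $JM(n-p)_p$. For $r = 1$, simplicity of $S^{(n-p-1,1)}$ is equivalent to $p \nmid n-p$, equivalently $p \nmid n$, equivalently $(n-1,1) \in JM(n)_p$. For $r > 1$, removing the last $p$ cells of the first row of $(n-r, 1^r)$ is a rim $p$-hook yielding $(n-p-r, 1^r)$, so these two partitions share their $p$-core; combining this observation with the James--Peel characterization of simple hook Specht modules allows lifting simplicity back to $\sym n$.

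The base range $p < n \leq 2p$ needs direct treatment. For $n = 2p$, the only base case with $p \mid n$, if $r$ has odd residue modulo $p$ then Lemma \ref{L;Hook sjt} gives stable generic Jordan type $[p-1]^b$ for $S^{(n-r,1^r)}\!\downarrow_{E_1}$, so by Lemma \ref{T;permutaiton} being trivial source forces $b = 0$; analysing this condition shows the Specht module already lies in $JM(n)_p$. The remaining small $n$ (with $p \nmid n$) are dispatched by the explicit $\mathcal{F}_m$ computations for $m < p$ in Lemmas \ref{L;S(n-1,1)} and \ref{L;F(S),p>2} together with Corollary \ref{C;Hook filtration}, since $\mathcal{F}_m$ is exact in that range and only creates hook Specht filtration factors. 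The principal obstacle is the lifting step for $r > 1$: while the $p$-block is preserved under first-row rim $p$-hook removal, the finer James--Peel criterion (in terms of the $p$-adic expansions of $n$ and $r$) must be carefully checked to ensure that simplicity of \emph{both} Specht summands in $\sym{n-p}$ really implies simplicity of $S^{(n-r,1^r)}$ in $\sym n$.
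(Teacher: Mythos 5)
Your backward direction and your reduction to $r \leq (n-1)/2$ via Lemma \ref{L;p-permutation modules}(iii) are fine, and your inductive approach (induct on $n$, apply $\mathcal{F}_p$, use Krull--Schmidt plus Corollary \ref{L;permutation}) is a genuinely different route from the paper, which instead analyses the residue $a$ of $r$ modulo $p$ case by case, using Lemma \ref{L;Hook sjt} to kill odd residues and one or two applications of $\mathcal{F}_p$ (plus conjugation when $n-r \leq p$) to manufacture a Specht factor of odd residue. Your inductive step for $n \geq 2p+1$ is in fact sound, and the ``principal obstacle'' you flag is illusory: in odd characteristic the James criterion for hooks is simply that $S^{(m-s,1^s)}$ is simple iff $p\nmid m$ or $s\in\{0,m-1\}$ (\cite[Theorem 24.1]{GJ1}); there is no $p$-adic condition on $n$ and $r$ (that is Murphy's $p=2$ result). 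Given $p\mid n$, both summands $S^{(n-p-r,1^r)}$ and $S^{(n-p-r+1,1^{r-1})}$ live in $\F\sym{n-p}$ with $p\mid(n-p)$, and with $1\leq r\leq (n-1)/2$ and $n\geq 2p+1$ one checks $r\notin\{0,n-p-1\}$ and $r-1\notin\{0,n-p-1\}$, so they cannot both be simple, contradicting the inductive hypothesis; hence $p\nmid n$ and the original module is simple.

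The genuine gap is in the base case $n=2p$. You only treat $r$ of odd residue modulo $p$; but after the reduction $r\leq p-1$, the residue of $r$ is just $r$ itself and can perfectly well be even (e.g.\ $p=3$, $n=6$, $r=2$, where $S^{(4,1,1)}$ is not simple and must be shown not to be trivial source). Lemma \ref{L;Hook sjt} says nothing in that case, and your inductive step cannot be invoked because it would pass to $\sym{p}$, which lies outside the range $n>p$ of the proposition. Since every $n$ with $p\mid n$ reduces, through the induction, ultimately to $n=2p$, this gap propagates to all of them. A fix entirely within your framework: for $n=2p$ and $1\leq r\leq p-1$, apply $\mathcal{F}_p$ to get $S^{(p-r,1^r)}\oplus S^{(p-r+1,1^{r-1})}$ over $\F\sym{p}$, and invoke the classification of trivial source hook Specht modules of $\F\sym{p}$ (Remark \ref{R;Othercases}: trivial source iff the leg length is even); the legs $r$ and $r-1$ cannot both be even. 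Finally, the parenthetical ``being trivial source forces $b=0$; analysing this condition shows the Specht module already lies in $JM(n)_p$'' is muddled: you have not shown that $b=0$ entails simplicity, and in fact for $p\mid n$ and $0<r<n-1$ the module is never simple, so the intended conclusion should be that $b>0$, hence a contradiction with Lemma \ref{T;permutaiton}.
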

\begin{proof}
One direction is clear by Theorem \ref{T;Hemmer}. For the other direction, we may assume further that $p\mid n$, since it is well-known that $S^{(n-r,1^r)}$ is simple if $p\nmid n$ (see \cite[Theorem 24.1]{GJ1}). Let $a$ be the remainder of $r$ when $r$ is divided by $p$. By Lemmas \ref{L;Hook sjt} and \ref{T;permutaiton}, we may assume further that $r>1$ and $2\mid a$. We consider about the following cases.
\begin{enumerate}[\text{Case} 1:]
\item $a=0.$
\end{enumerate}
When $n-r\geq 2p$, by Lemma \ref{L;F(S),p>2}, we have
\begin{align*}
&\mathcal{F}_p(S^{(n-r,1^r)})\cong S^{(n-p-r,1^r)}\oplus S^{(n-p-r+1,1^{r-1})},\\
&\mathcal{F}_p(S^{(n-p-r+1,1^{r-1})})\cong
S^{(n-2p-r+1,1^{r-1})}\oplus S^{(n-2p-r+2,1^{r-2})}.
\end{align*}
Using Lemma \ref{L;Hook sjt}, we get that the stable generic Jordan type of $S^{(n-2p-r+2,1^{r-2})}{\downarrow_{E_3}}$ is $[p-1]^b$, where $b\in \mathbb{N}$. Therefore, $S^{(n-2p-r+2,1^{r-2})}$ is not a trivial source module by Lemma \ref{T;permutaiton}. By Corollary \ref{L;permutation}, the fact implies that $S^{(n-r,1^r)}$ is not a trivial source module. When $n-r<2p$, note that $n-r=p$ since $p\mid n-r$ and $n>r$. By Lemma \ref{L;p-permutation modules} (iii), we know that $S^{(r+1,1^{p-1})}$ is a trivial source module if and only if $S^{(p,1^r)}$ is a trivial source module. Notice that $r>p-1$ as $r>1$ and $a=0$. We now apply $\mathcal{F}_p$ to $S^{(r+1,1^{p-1})}$ and obtain that
$$\mathcal{F}_p(S^{(r+1,1^{p-1})})\cong S^{(r-p+1,1^{p-1})}\oplus S^{(r-p+2,1^{p-2})}$$
by Lemma \ref{L;F(S),p>2}. By Lemma \ref{L;Hook sjt}, the stable generic Jordan type of $S^{(r-p+2,1^{p-2})}{\downarrow_{E_2}}$ is $[p-1]^c$, where $c\in \mathbb{N}$. Therefore, by the same reason mentioned in the case $n-r\geq 2p$ and Lemma \ref{L;p-permutation modules} (iii), we deduce that $S^{(p,1^r)}$ is not a trivial source module.
\begin{enumerate}[\text{Case} 2:]
\item $0<a<p.$
\end{enumerate}
When $n-r>p$, by Lemma \ref{L;F(S),p>2}, we have
$$\mathcal{F}_p(S^{(n-r,1^r)})\cong S^{(n-p-r,1^r)}\oplus S^{(n-p-r+1,1^{r-1})}.$$
By Lemma \ref{L;Hook sjt} again, the stable generic Jordan type of $S^{(n-p-r+1,1^{r-1})}{\downarrow_{E_2}}$ is $[p-1]^d$, where $d\in \mathbb{N}$. So $S^{(n-p-r+1, 1^{r-1})}$ is not a trivial source module by Lemma \ref{T;permutaiton}. We thus deduce that $S^{(n-r,1^r)}$ is not a trivial source module by Corollary \ref{L;permutation}. For the left case, note that $n-r\neq p$ as $p\mid n$ and $0<a<p$. When $n-r<p$, if $a=p-1$, we have $n-r=1$ as $p\mid n$ and $n-r<p$. So $S^{(n-r,1^r)}$ is in fact the simple module $S^{(1^{n})}$. We thus assume that $a<p-1$ in the following discussion. By Lemma \ref{L;p-permutation modules} (iii), we know that $S^{(r+1,1^{n-r-1})}$ is a trivial source module if and only if $S^{(n-r,1^r)}$ is a trivial source module. Note that $r>p$. Otherwise, we get $2p\leq n-r+r<p+p=2p$, which is absurd. Moreover, observe that $n-r-1\equiv p-1-a\pmod p$, $2\mid p-1-a$ and $p-1-a>0$. One may write a similar proof as the one given in the case $n-r>p$ to deduce that $S^{(r+1,1^{n-r-1})}$ is not a trivial source module. Using Lemma \ref{L;p-permutation modules} (iii) again, we get that
$S^{(n-r,1^r)}$ is not a trivial source module.

By the analysis of above two cases, we have shown that $S^{(n-r,1^r)}$ is not a trivial source module if $n>p>2$, $p\mid n$ and $n-r>1$. Therefore, if $n>p>2$, $S^{(n-r,1^r)}$ is a trivial source module only if it is simple. The proof is now complete.
\end{proof}
\begin{rem}\label{R;Othercases}
Let $p>2$. If $n<p$, all the Specht modules are trivial source modules. Moreover, they are all simple. If $n=p$, let $a$ be an integer such that $0\leq a<p$. It is known that $S^{(p-a,1^a)}$ is a trivial source module if and only if $2\mid a$ (See \cite{Erdmann}). These trivial source Specht modules are non-simple except $S^{(p)}$ and $S^{(1^p)}$.
\end{rem}
We now deal with the case $p=2$. In this case, all the indecomposable Specht modules labelled by hook partitions of $n$ were classified by Murphy in \cite{Murphy}. Let us state her result as follows. If $2\nmid n$ and $n\geq 2r$, $S^{(n-r,1^r)}$ is indecomposable if and only if $n\equiv 2r+1\pmod {2^L}$, where $L\in \mathbb{N}$ and $2^{L-1}\leq r<2^L$. If $2\nmid n$ and $2r>n$, $S^{(n-r,1^r)}$ is indecomposable if and only if $n\equiv 2r+1\pmod {2^{L'}}$, where $L'\in \mathbb{N}$ and $2^{{L'}-1}\leq n-r-1<2^{L'}$. If $2\mid n$, all Specht modules labelled by hook partitions of $n$ are indecomposable. We need the following lemma to conclude the case.
\begin{lem}\label{L;p=2Hook}
Let $p=2$ and $2\nmid n$. If $S^{(n-r,1^r)}$ is indecomposable, then
\begin{enumerate}
\item [\em(i)] $S^{(n-r,1^r)}$ is a trivial source module.
\item [\em(ii)] $S^{(n-r,1^r)}$ is non-simple if and only if $r>1$ and $r\neq n-2,\ n-1$.
\end{enumerate}
\end{lem}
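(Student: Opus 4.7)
For part (i), the strategy is to identify $S^{(n-r,1^r)}$ with a Young module $Y^\mu$, since Young modules are trivial source by Theorem~\ref{T; Youngvertices}. The base case $r=1$ is standard: since $n$ is odd, $2 \nmid n$, and the classical splitting $M^{(n-1,1)} \cong S^{(n-1,1)} \oplus \F$ gives $S^{(n-1,1)} \cong Y^{(n-1,1)}$. For general $r$ satisfying Murphy's congruence $n \equiv 2r+1 \pmod{2^L}$, the plan is to read off an explicit partition $\mu$ from the $2$-adic expansion of $n$ and verify $S^{(n-r,1^r)} \cong Y^\mu$ via a Brauer construction computation. By Theorem~\ref{Erdmann}(iii), $Y^\mu(P_\mu)$ at a vertex $P_\mu$ is a tensor product of Young modules indexed by the $2$-adic components of $\mu$; matching this against the analogous computation for $S^{(n-r,1^r)}$, using the concrete bases from the proofs of Lemmas~\ref{L;p=2,calculation} and \ref{L;p=2,calculation1}, would establish the isomorphism by Theorem~\ref{T;Broue}(i) and hence the trivial source property.

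For part (ii), the four simple cases are checked individually. When $r = 0$, $S^{(n)} \cong \F$ is trivially simple; when $r = n-1$, $S^{(1^n)} \cong sgn \cong \F$ in characteristic $2$, hence simple; when $r = 1$, $S^{(n-1,1)}$ is simple because $2 \nmid n$, so $S^{(n-1,1)} = D^{(n-1,1)}$; and when $r = n-2$, the sign-twist $S^{(2,1^{n-2})} \otimes sgn \cong S_{(n-1,1)}$, combined with $sgn = \F$ in characteristic $2$ and self-duality of the simple module $D^{(n-1,1)}$, yields $S^{(2,1^{n-2})} \cong S^{(n-1,1)}$ simple. For the converse direction, when $1 < r < n-2$ and $S^{(n-r,1^r)}$ is indecomposable, non-simplicity will be deduced from the James--Mathas classification: the partition $(n-r,1^r)$ does not lie in $JM(n)_2$ for $r$ in this middle range. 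A more hands-on alternative is to read off two non-isomorphic composition factors directly from the Specht filtration of $S^{(n-r,1^r)}\downarrow_{\sym{n-1}}$ provided by Lemma~\ref{L;F(S),p=2} with $m = 1$, whose layers $S^{(n-r-1,1^r)}$ and $S^{(n-r,1^{r-1})}$ are both non-zero and distinct under the given range.

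The principal obstacle is the Brauer construction matching in part (i): producing the precise Young module $Y^\mu$ and verifying the isomorphism $S^{(n-r,1^r)}(P) \cong Y^\mu(P)$ at a common vertex $P$ requires careful tracking of Murphy's congruence through the $2$-adic expansion of $n$. Purely inductive approaches based on restriction to $\sym{n-1}$ are delicate, because $n$ odd forces $n-1$ even, so the inductive hypothesis of the lemma does not transfer directly to the smaller symmetric group, and any such branching argument must either descend further past the parity obstruction or be combined with an explicit Young module identification to close the loop.
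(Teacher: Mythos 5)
Your part (i) is not actually a proof: you outline a strategy of identifying $S^{(n-r,1^r)}$ with an explicit Young module $Y^\mu$ by matching Brauer constructions, but you never produce $\mu$ or carry out the matching, and you flag this yourself as the ``principal obstacle.'' The paper avoids this entirely with a short argument that you should notice: since $2\nmid n$, the trivial module splits off, $M^{(n-1,1)}\cong S^{(n-1,1)}\oplus\F$, and therefore $\bigwedge^r S^{(n-1,1)}$ is a direct summand of $\bigwedge^r M^{(n-1,1)}$. In characteristic $2$ the signs in the exterior-power action disappear, so $\bigwedge^r M^{(n-1,1)}$ is literally a permutation module (it is the Young permutation module on $r$-subsets of $\mathbf{n}$). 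Combined with the standing identification $S^{(n-r,1^r)}\cong\bigwedge^r S^{(n-1,1)}$ and the hypothesis of indecomposability, this gives trivial source in two lines, with no need to pin down a vertex, a partition $\mu$, or a Brauer quotient. Incidentally, this same observation already shows $S^{(n-r,1^r)}$ is a Young module, which is what your strategy was aiming for, but the stronger identification is not needed here.

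For part (ii), your four ``if'' cases ($r=0,1,n-2,n-1$) are all correct; the $r=n-2$ computation via $S^{(2,1^{n-2})}\otimes sgn\cong S_{(n-1,1)}$ and self-duality of the simple $D^{(n-1,1)}$ is fine. For the ``only if'' direction you invoke the James--Mathas classification without actually extracting from it that $(n-r,1^r)\notin JM(n)_2$ when $1<r<n-2$; that is a defensible shortcut, though the paper points to the older and more targeted \cite[Theorem 23.7]{GJ1}, which treats hook Specht modules directly. Your suggested ``hands-on alternative'' is not sound as stated: exhibiting two distinct Specht layers in $S^{(n-r,1^r)}{\downarrow_{\sym{n-1}}}$ does not by itself preclude simplicity of $S^{(n-r,1^r)}$, since the restriction of a simple module need not be simple or even multiplicity-free, and you would still have to compare composition factors rather than Specht layers to get a contradiction.
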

\begin{proof}
Recall that $r\geq 1$. We now work on $\bigwedge^rS^{(n-1,1)}$. As $2\nmid n$, it is well-known that $\bigwedge^rS^{(n-1,1)}\mid \bigwedge^r M^{(n-1,1)}$. (i) thus follows as $\bigwedge^r M^{(n-1,1)}$ is a permutation module. (ii) is a part of \cite[Theorem 23.7]{GJ1}.
\end{proof}
\begin{rem}\label{R;p=2Hook}
Let $p=2$. If $2\nmid n$, all the non-simple trivial source Specht modules labelled by hook partitions of $n$ are clear by Lemma \ref{L;p=2Hook} and the Murphy's result. If $2\mid n$, let $P$ be a Sylow $2$-subgroup of $\sym{n}$. By \cite[Corollary 4.4, Theorem 4.5]{MP}, note that $P$ is a vertex of $S^{(n-r,1^r)}$ and $S^{(n-r,1^r)}{\downarrow_P}$ is a $P$-source of $S^{(n-r,1^r)}$. In particular, only the trivial module is a trivial source Specht module in the case.
\end{rem}

It is trivial to see that $S^{(1)}$ is a trivial source module. Theorem \ref{T;A} is now proved by combining Theorem \ref{T;Hemmer}, Proposition \ref{P;Hook n>p} and Remarks \ref{R;Othercases}, \ref{R;p=2Hook}.

\section{The Specht modules labelled by two-part partitions}
The proof of Theorem \ref{T;C} will be presented in this section. To finish the proof, we first work on Specht modules labelled by partitions with at most two columns and then translate the obtained results to the Specht modules labelled by two-part partitions by Lemma \ref{L;p-permutation modules} (iii).

Let $\lambda=(\lambda_1,\ldots,\lambda_\ell)\vdash n$, $s\in \mathbb{N}$ and $s\leq \ell$. For our purpose, define $\overline{\lambda}^s$ to be the partition obtained from $\lambda$ by deleting the first $s$ rows of $\lambda$. Namely, we have $\overline{\lambda}^s=(\lambda_{s+1},\ldots, \lambda_\ell)\vdash n-\sum_{i=1}^s\lambda_i$ if $s<\ell$ and get $\varnothing$ if $s=\ell$. For any non-negative integer $m$, write $\ell_p(m)$ to denote the smallest non-negative integer $a$ such that $m<p^a$. We need some preliminary results.

\begin{lem}\label{L;James}
Let $\lambda=(\lambda_1,\ldots,\lambda_\ell)\vdash n$. We have
\begin{enumerate}
\item [\em(i)] \cite[Theorem 24.4]{GJ1} $\textnormal{Hom}_{\F\sym{n}}(\F,S^{\lambda})\neq 0$ if and only if $\lambda_i\equiv -1 \pmod {p^{\ell_p(\lambda_{i+1})}}$ for all $1\leq i<\ell$.
\item [\em(ii)] \cite[Corollary 13.17]{GJ1} $\textnormal{Hom}_{\F\sym{n}} (\F,S_{\lambda})\neq 0$ implies that $\lambda=(n)$ if $p>2$.
\end{enumerate}
\end{lem}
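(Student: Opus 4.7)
The plan is to treat the two parts independently.

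For part (i), start from the identification $\text{Hom}_{\sym{n}}(\F, S^\lambda)=(S^\lambda)^{\sym n}$. Since the ambient invariant space $(M^\lambda)^{\sym n}$ is one-dimensional, spanned by the sum $T_\lambda$ of all $\lambda$-tabloids, the question reduces to whether $T_\lambda$ already lies in $S^\lambda$. The natural tool is James's Kernel Intersection Theorem, which realises $S^\lambda=\bigcap_{i,v}\ker\psi_{i,v}$, where $\psi_{i,v}\colon M^\lambda\to M^{\lambda(i,v)}$ (with $1\le i<\ell$, $1\le v\le\lambda_{i+1}$) sums, for each $\lambda$-tabloid, over all ways of moving $v$ entries from row $i+1$ up into row $i$. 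A direct counting argument gives $\psi_{i,v}(T_\lambda)=\binom{\lambda_i+v}{v}\,T_{\lambda(i,v)}$, where $T_{\lambda(i,v)}$ is the analogous tabloid sum in $M^{\lambda(i,v)}$.

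Hence $T_\lambda\in S^\lambda$ iff $\binom{\lambda_i+v}{v}\equiv 0\pmod p$ for every $1\le i<\ell$ and every $1\le v\le\lambda_{i+1}$. I would conclude by invoking Kummer's theorem: $p\mid\binom{\lambda_i+v}{v}$ precisely when the base-$p$ addition of $\lambda_i$ and $v$ produces at least one carry. Probing digit by digit with the test values $v=p^j$ (each of which is at most $\lambda_{i+1}$ for $j<\ell_p(\lambda_{i+1})$ by minimality of $\ell_p$) one checks that the condition ``every $1\le v\le\lambda_{i+1}$ yields a carry'' is equivalent to the last $\ell_p(\lambda_{i+1})$ base-$p$ digits of $\lambda_i$ all equalling $p-1$, i.e.\ $\lambda_i\equiv-1\pmod{p^{\ell_p(\lambda_{i+1})}}$.

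For part (ii) I would dualise and twist by the sign module. Applying the identity $S^\mu\otimes\sgn\cong S_{\mu'}$ to $\mu=\lambda'$, together with $\sgn^{\otimes 2}\cong\F$, yields $S_\lambda\cong S^{\lambda'}\otimes\sgn$, so that
\[
\text{Hom}_{\sym{n}}(\F,S_\lambda)\;\cong\;\text{Hom}_{\sym{n}}(\sgn,S^{\lambda'})\;\hookrightarrow\;\text{Hom}_{\sym{n}}(\sgn,M^{\lambda'}).
\]
Writing $M^{\lambda'}\cong\F_{\sym{\lambda'}}\!\uparrow^{\sym{n}}$ and applying Frobenius reciprocity identifies the right-hand space with $\text{Hom}_{\sym{\lambda'}}(\sgn,\F)$. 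For $p>2$ this vanishes unless $\sgn$ restricts to the trivial character on $\sym{\lambda'}$, which happens precisely when every component $\sym{\lambda'_i}$ is generated only by even permutations---equivalently $\lambda'_i\le 1$ for all $i$, so $\lambda'=(1^n)$ and $\lambda=(n)$. The main subtlety is exactly the use of $p>2$: in characteristic $2$ the sign module coincides with $\F$, the Frobenius computation becomes tautological, and the statement indeed fails (for example $\lambda=(1^n)$ gives $S_\lambda=\F$ while $\lambda\neq(n)$ for $n\ge 2$).
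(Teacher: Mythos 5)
The paper does not actually prove this lemma; it simply cites James's book (Theorem~24.4 and Corollary~13.17 of \cite{GJ1}), so there is no in-paper proof to compare against. What you have done is reconstruct proofs of the cited results, and both reconstructions are correct.

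For part (i), your argument via $(S^\lambda)^{\sym{n}}\subseteq (M^\lambda)^{\sym{n}}=\F T_\lambda$, the Kernel Intersection Theorem, and the computation $\psi_{i,v}(T_\lambda)=\binom{\lambda_i+v}{v}T_{\lambda(i,v)}$ (with $v$ counting the entries moved up, so $1\le v\le\lambda_{i+1}$) is essentially the argument James gives for Theorem~24.4, phrased with Kummer's carry criterion rather than James's explicit digit manipulation. The probing-by-$v=p^j$ step for $j<\ell_p(\lambda_{i+1})$ correctly shows necessity of the digit condition (a carry at $v=p^j$ forces the $j$-th base-$p$ digit of $\lambda_i$ to be $p-1$), and sufficiency follows since any $1\le v\le\lambda_{i+1}$ has a least significant nonzero digit in some position $j<\ell_p(\lambda_{i+1})$, which then collides with the $p-1$ there. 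For part (ii), the chain $\textnormal{Hom}(\F,S_\lambda)\cong\textnormal{Hom}(\sgn,S^{\lambda'})\hookrightarrow\textnormal{Hom}(\sgn,M^{\lambda'})\cong\textnormal{Hom}_{\sym{\lambda'}}(\sgn{\downarrow},\F)$ followed by the observation that $\sgn{\downarrow}_{\sym{\lambda'}}\not\cong\F$ when $p>2$ and some $\lambda'_i\ge 2$ is a clean, self-contained route; it is not literally James's derivation of Corollary~13.17 (which sits inside his development of the bilinear form and Submodule Theorem), but it is a perfectly valid and arguably more transparent proof, and it correctly isolates where $p>2$ is used.
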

Let $\lambda\vdash n$. We will call $\lambda$ a James partition if
$\textnormal{Hom}_{\F\sym{n}}(\F,S^{\lambda})\neq 0$.
\begin{lem}\label{L;Jamestrivial}
Let $p>2$ and $\lambda$ be a James partition of $n$. Then $S^\lambda$ is a trivial source module if and only if $\lambda=(n)$.
\end{lem}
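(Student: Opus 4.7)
My plan is to show that under the hypotheses $S^\lambda$ must coincide with a Scott module, and then invoke Lemma \ref{L;James} (ii) on its dual to force $\lambda=(n)$. The reverse implication is trivial since $S^{(n)}\cong\F$ is the trivial module, which is certainly a trivial source module.

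First I would let $Q$ be a vertex of $S^\lambda$. Because $S^\lambda$ has trivial source, it is a direct summand of the transitive permutation module $\F_Q{\uparrow^{\sym{n}}}$. A standard Frobenius reciprocity calculation gives $\dim\textnormal{Hom}_{\sym{n}}(\F,\F_Q{\uparrow^{\sym{n}}})=1$, so this permutation module admits a unique trivial submodule, namely the one sitting inside the Scott module $Sc_{\sym{n}}(Q)$.

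Next, using that $\lambda$ is a James partition, I have $\textnormal{Hom}_{\sym{n}}(\F,S^\lambda)\neq 0$, so $S^\lambda$ itself contains a trivial submodule. Viewing $S^\lambda$ as a summand of $\F_Q{\uparrow^{\sym{n}}}$, this trivial submodule must coincide with the unique such submodule of the whole permutation module. By Krull--Schmidt the indecomposable summand containing the trivial line is uniquely determined, and by definition it is precisely the Scott module, so $S^\lambda\cong Sc_{\sym{n}}(Q)$.

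Finally I would use the defining property that a Scott module also has a trivial quotient, whence $\textnormal{Hom}_{\sym{n}}(S^\lambda,\F)\neq 0$, equivalently $\textnormal{Hom}_{\sym{n}}(\F,S_\lambda)\neq 0$ after dualising. Applying Lemma \ref{L;James} (ii), which requires $p>2$, then yields $\lambda=(n)$. I do not anticipate any step being genuinely difficult; the main observation to get right is the Krull--Schmidt identification of $S^\lambda$ with $Sc_{\sym{n}}(Q)$, which is what turns the ``$\F$ embeds in $S^\lambda$'' hypothesis into ``$\F$ embeds in $S_\lambda$'' and thereby activates Lemma \ref{L;James} (ii).
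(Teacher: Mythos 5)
Your proof is correct and follows essentially the same route as the paper's: both arguments hinge on the uniqueness of the trivial submodule of $\F_Q{\uparrow^{\sym{n}}}$, the identification of $S^\lambda$ with the Scott module, and Lemma \ref{L;James}~(ii) applied to the dual. The paper phrases it as a contradiction (assuming $\lambda\neq(n)$ yields two trivial submodules of the permutation module), whereas you reason forward to $S^\lambda\cong Sc_{\sym{n}}(Q)$ and then invoke the trivial quotient, but the content is identical.
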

\begin{proof}
One direction is clear. For the other direction, let $P$ be a vertex of $S^{\lambda}$ and suppose that $\lambda\neq (n)$. As $S^{\lambda}$ is a trivial source module, we have $S^{\lambda}\mid \F_P{\uparrow^{\sym{n}}}$. Note that Lemma \ref{L;James} (ii) implies that $S^{\lambda}\not\cong \textnormal{Sc}_{\sym{n}}(P)$. However, since $\lambda$ is a James partition, we come to a contradiction as we get two trivial submodules for the transitive permutation module  $\F_P{\uparrow^{\sym{n}}}$. So $\lambda$ has to be $(n)$ if $S^{\lambda}$ is a trivial source module. The lemma is shown.
\end{proof}

Let $a$ and $b$ be non-negative integers such that $b<p-1$ and $n=a(p-1)+b$. If $p>2$, it is well-known that $S^{((a+1)^b, a^{p-1-b})}$ has the simple head $sgn$ (see \cite[Example 24.5 (iii)]{GJ1}). We now prove a result which may have its own interest.

\begin{prop}
Let $p>2$ and $n=a(p-1)+b$ where $a$ and $b$ are non-negative integers such that $b<p-1$. Then $S^{((a+1)^b, a^{p-1-b})}$ is a trivial source module if and only if $n<p$.
\end{prop}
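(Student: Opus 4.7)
The plan is to handle the forward direction by semisimplicity and the converse by passing to the conjugate partition and invoking Lemma~\ref{L;Jamestrivial}. A direct bookkeeping computation gives $\lambda' = ((p-1)^a, b)$ (with the trailing $b$ dropped when $b = 0$). When $n < p$ we have $p \nmid n!$, so $\F\sym{n}$ is semisimple, $\sym{n}$ has trivial Sylow $p$-subgroup, and every indecomposable $\F\sym{n}$-module has trivial vertex; in particular $S^\lambda$ is automatically a trivial source module.

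For the harder direction, I would first apply Lemma~\ref{L;p-permutation modules}(iii) to translate the question: $S^\lambda$ is a trivial source module if and only if $S^{\lambda'}$ is. The crucial observation is that $\lambda'$ is a James partition. This is immediate from the hint that $S^\lambda$ has simple head $sgn$, since then $S^\lambda \otimes sgn \cong S_{\lambda'}$ has $\F$ as a quotient, and dualising gives $\textnormal{Hom}_{\sym{n}}(\F, S^{\lambda'}) \neq 0$. One could instead verify the James condition directly via Lemma~\ref{L;James}(i): each non-final part of $\lambda' = ((p-1)^a, b)$ equals $p - 1 \equiv -1 \pmod p$ and the next part is strictly less than $p$, so $\ell_p$ equals $1$ and the required congruence holds.

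Finally I would apply Lemma~\ref{L;Jamestrivial} to $S^{\lambda'}$: it is a trivial source module if and only if $\lambda' = (n)$. Inspecting $\lambda' = ((p-1)^a, b)$, the single-row condition forces either $a = 0$ (whence $\lambda' = (b)$ and $n = b < p - 1$) or $b = 0$ and $a = 1$ (whence $\lambda' = (p-1)$ and $n = p - 1$); in both situations $n < p$. Contrapositively, if $n \geq p$ then $\lambda' \neq (n)$, so $S^{\lambda'}$, and therefore $S^\lambda$, is not a trivial source module. There is essentially no obstacle: the argument is a conjugation trick that lets Lemma~\ref{L;Jamestrivial} do the work, and the only care needed is the small case analysis confirming that $\lambda'$ collapses to one row exactly when $n < p$.
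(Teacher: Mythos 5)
Your proposal is correct and follows essentially the same route as the paper: reduce via the conjugate (equivalently, the sign-twist $S_{\lambda'}$) of the Specht module, exploit the simple head $sgn$ to obtain a trivial submodule, and use the James constraint to force $\lambda'=(n)$, hence $n<p$. The only cosmetic difference is that you invoke Lemma~\ref{L;Jamestrivial} directly, whereas the paper re-derives the Scott-module step for $S_{\epsilon'}$ inline and then cites Lemma~\ref{L;James}(ii); the underlying argument is the same.
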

\begin{proof}
One direction is trivial. To prove the other direction, write $\epsilon$ to denote $((a+1)^b, a^{p-1-b})$. If  $S^{\epsilon}$ is a trivial source module, note that $S_{{\epsilon^{'}}}$ is also a trivial source module by Lemma \ref{L;p-permutation modules} (ii). Moreover, $S_{{\epsilon}^{'}}$ has a trivial quotient module as $S^{\epsilon}$ has the simple head $sgn$. Let $P$ be a vertex of $S_{\epsilon^{'}}$. The two facts imply that $S_{\epsilon^{'}}\mid {\F_P}{\uparrow^{\sym{n}}}$ and $S_{\epsilon^{'}}\cong Sc_{\sym{n}}(P)$. Therefore, $S_{\epsilon^{'}}$ has a trivial submodule by the definition of $Sc_{\sym{n}}(P)$. Using Lemma \ref{L;James} (ii), we deduce that $\epsilon=(1^n)$ and $n<p$.
\end{proof}

For any $m\in \mathbb{N}$, define $\nu_p(m)$ to be the largest non-negative integer $a$ such that $p^a\mid m$. In particular, $\nu_p(m)\geq 0$. By convention, put $\nu_p(0)=\infty$. The following theorem is known as the Carter criterion. One may refer to \cite[Theorem 7.3.23]{GJ3} for a proof.
\begin{thm}\cite[Carter criterion]{GJ3} \label{T;Carter}
Let $\lambda$ be a $p$-restricted partition of $n$. Then $S^{\lambda}$ is simple if and only if $\nu_p(h_{a,b}^\lambda)=\nu_p(h_{a,c}^\lambda)$ for any two nodes $(a,b)$ and $(a,c)$ of $\lambda$.
\end{thm}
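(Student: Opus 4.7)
The plan is to invoke the Jantzen sum formula for Specht modules (often called the Jantzen--Schaper formula, due originally to James and Murphy), which, for any partition $\lambda \vdash n$, provides a canonical descending filtration $S^\lambda \supseteq J^1(S^\lambda) \supseteq J^2(S^\lambda) \supseteq \cdots$ together with an explicit expression for the sum $\sum_{i \geq 1} [J^i(S^\lambda)]$ in the Grothendieck group of $\F\sym{n}$-modules. That sum is a non-negative $\Z$-linear combination of classes $[S^\mu]$ with $\mu \rhd \lambda$, whose coefficients are built from differences $\nu_p(h^\lambda_{a,b}) - \nu_p(h^\lambda_{c,d})$ attached to explicit hook-move operations that transform $\lambda$ into $\mu$.

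Since $\lambda$ is $p$-restricted, $S^\lambda$ admits the simple head $D^\lambda$, so the simplicity of $S^\lambda$ is equivalent to $J^1(S^\lambda) = 0$, which in turn is equivalent to the entire Jantzen sum vanishing in the Grothendieck group. First I would dispatch the ``if'' direction: when each row of $\lambda$ has constant $p$-adic hook valuation, a direct inspection shows that every coefficient in the Jantzen--Schaper formula reduces to a telescoping sum of differences $\nu_p(h^\lambda_{a,b}) - \nu_p(h^\lambda_{a,c})$ for nodes $(a,b), (a,c)$ in a common row, and hence vanishes; therefore $J^1(S^\lambda) = 0$ and $S^\lambda$ is simple.

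For the ``only if'' direction, suppose nodes $(a,b), (a,c)$ in a common row of $\lambda$ satisfy $\nu_p(h^\lambda_{a,b}) \neq \nu_p(h^\lambda_{a,c})$. I would choose such a violating pair that is extremal in a suitable sense (for instance, lexicographically minimal in $a$ and then minimising $|b-c|$) and isolate the partition $\mu \rhd \lambda$ produced by the hook-raising move on the corresponding pair of hooks. The Jantzen--Schaper formula then forces the coefficient of $[S^\mu]$ in the Jantzen sum to be a nonzero positive integer, so $J^1(S^\lambda) \neq 0$ and $S^\lambda$ is not simple.

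The main obstacle will be controlling cancellations in the Jantzen sum: several distinct hook moves on $\lambda$ can produce the same target partition $\mu$, and the resulting contributions carry potentially offsetting $\nu_p$-valuations. The heart of the argument is a combinatorial analysis showing that an extremal violating pair of hooks yields a net nonzero contribution after all other moves landing on the same $\mu$ have been accounted for; this uses the $p$-restrictedness of $\lambda$ to rule out boundary degeneracies and to pin down the sign of the dominant contribution.
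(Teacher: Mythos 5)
The paper does not prove this theorem: it is stated as a citation to James--Kerber \cite[Theorem 7.3.23]{GJ3} (the Carter criterion), so there is no internal proof to compare against. Your plan --- to derive it from the Jantzen sum formula of James--Murphy and Schaper --- is a legitimate and historically grounded route to this result, but as written there are two concrete gaps.

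First, the step ``since $\lambda$ is $p$-restricted, $S^\lambda$ admits the simple head $D^\lambda$, so simplicity of $S^\lambda$ is equivalent to $J^1(S^\lambda)=0$'' is false in general. The module $D^\lambda$ is defined, and $S^\lambda/J^1(S^\lambda)\cong D^\lambda$ holds, only when $\lambda$ is $p$-\emph{regular}; when $\lambda$ is not $p$-regular the bilinear form on $S^\lambda$ is identically degenerate and $J^1(S^\lambda)=S^\lambda$. There are plenty of $p$-restricted partitions that are not $p$-regular --- for instance $(1,1)$ for $p=2$, or $(2,2,2)$ for $p=3$ --- and for such $\lambda$ the Jantzen sum $\sum_{i>0}[J^i(S^\lambda)]$ is nonzero whether or not $S^\lambda$ is simple (indeed $S^{(1,1)}$ is simple in characteristic $2$ yet its Jantzen sum equals $[S^{(1,1)}]\neq 0$). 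So the criterion ``simple $\Leftrightarrow$ Jantzen sum vanishes'' does not apply in the generality you need. The standard repair is to pass to the conjugate $\lambda'$, which \emph{is} $p$-regular when $\lambda$ is $p$-restricted, use $S^\lambda\cong(S^{\lambda'}\otimes\mathrm{sgn})^*$ so that $S^\lambda$ is simple iff $S^{\lambda'}$ is, run the Jantzen argument on $S^{\lambda'}$, and then translate ``constancy of $\nu_p$ along columns of $\lambda'$'' into ``constancy along rows of $\lambda$'' via $h^{\lambda'}_{i,j}=h^\lambda_{j,i}$. This reduction needs to appear explicitly.

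Second, your ``only if'' direction is a plan rather than a proof: the whole content of that direction is precisely the cancellation analysis you defer to the final paragraph (choosing an extremal violating pair and showing its contribution to the Jantzen sum survives all other moves landing on the same $\mu$). Without carrying out that combinatorial argument --- or citing the place in the literature where it is done --- the non-simplicity conclusion is not established. Since the paper simply invokes \cite[Theorem 7.3.23]{GJ3}, the cleanest course here is to cite that reference (or James's original paper on Carter's conjecture) rather than re-derive the sum-formula computation.
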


To finish the preparation, we need a result found by Hemmer in \cite{Hemmer1}.

\begin{thm}\cite[Theorem 4.5]{Hemmer1}\label{T;Hemmer1}
Let $\lambda=(\lambda_1,\ldots,\lambda_{\ell})\vdash n$ and $\lambda_1<p$. Then $\mathcal{F}_{\lambda_1}(S^{\lambda})\cong S^{\overline{\lambda}^1}.$
\end{thm}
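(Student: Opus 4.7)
The key is exactness. Since $\lambda_1<p$, the element $e=\tfrac{1}{\lambda_1!}\sum_{\tau\in\sym{\lambda_1}}\tau$ is a well-defined idempotent of $\F\sym{\lambda_1}$, so $\mathcal{F}_{\lambda_1}(M)=eM$ and the functor $\mathcal{F}_{\lambda_1}$ is exact. My plan is to combine this exactness with a branching-type Specht filtration of $S^\lambda{\downarrow_{\sym{(\lambda_1,n-\lambda_1)}}}$.

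The restriction admits a filtration (Littlewood--Richardson / Peel type) whose successive quotients, as $\F[\sym{\lambda_1}\times\sym{n-\lambda_1}]$-modules, have the form $S^{\mu_1}\boxtimes S^{\mu_2}$ with $\mu_1\vdash\lambda_1$, $\mu_2\vdash n-\lambda_1$, and $\lambda/\mu_2$ a horizontal strip recording the shape $\mu_1$. Applying the exact functor $\mathcal{F}_{\lambda_1}$ preserves the filtration and replaces each quotient by $(S^{\mu_1})^{\sym{\lambda_1}}\otimes_{\F}S^{\mu_2}$, since the two factors of $\sym{\lambda_1}\times\sym{n-\lambda_1}$ commute.

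The crux is to show $(S^{\mu_1})^{\sym{\lambda_1}}=0$ for every $\mu_1\vdash\lambda_1$ other than $\mu_1=(\lambda_1)$. By Lemma~\ref{L;James}(i) this nonvanishing forces $\mu_1$ to be a James partition, requiring $\mu_{1,i}\equiv -1\pmod{p^{\ell_p(\mu_{1,i+1})}}$ for each $i<\ell(\mu_1)$. Every positive part of $\mu_1$ is at most $\lambda_1<p$, so each nonzero $\mu_{1,i+1}$ gives $\ell_p(\mu_{1,i+1})=1$, and hence $\mu_{1,i}=p-1$ whenever $\mu_{1,i+1}>0$. A size check rules out any multi-part $\mu_1$: such a partition would have $|\mu_1|\ge(p-1)+1=p$, contradicting $|\mu_1|=\lambda_1<p$. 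Thus only $\mu_1=(\lambda_1)$ contributes, and in that case $S^{(\lambda_1)}=\F$ gives $(S^{(\lambda_1)})^{\sym{\lambda_1}}=\F$. The unique $\mu_2\vdash n-\lambda_1$ with $\lambda/\mu_2$ a horizontal $\lambda_1$-strip is $\mu_2=\overline{\lambda}^1$, since $\lambda$ has exactly $\lambda_1$ columns and removing the bottom box of each column returns precisely the partition $\overline{\lambda}^1$.

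Consequently the filtration of $\mathcal{F}_{\lambda_1}(S^\lambda)$ collapses: every subquotient vanishes except one, which is $\F\otimes S^{\overline{\lambda}^1}\cong S^{\overline{\lambda}^1}$, and because a finite filtration with only one nonzero subquotient is isomorphic to that subquotient, we conclude $\mathcal{F}_{\lambda_1}(S^\lambda)\cong S^{\overline{\lambda}^1}$. The main obstacle is justifying the branching-type Specht filtration for the restriction over $\F$: the composition-series multiplicities come from Littlewood--Richardson, but upgrading these to a genuine filtration (rather than just a character equality) requires Peel/James style results on successive restrictions, obtained by iterating $S^\lambda{\downarrow_{\sym{n-1}}}$ and carefully controlling the order of the Specht pieces that appear; this is where most of the technical effort goes.
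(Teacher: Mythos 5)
The paper does not prove this statement; it is quoted verbatim as \cite[Theorem 4.5]{Hemmer1}, so there is no internal proof against which to compare. Evaluating your argument on its own merits: it is correct, and it is a natural way to derive the result, though it outsources the hardest step.

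Your reduction is: exactness of $\mathcal{F}_{\lambda_1}$ for $\lambda_1<p$, applied to a Littlewood--Richardson (James--Peel) Specht filtration of $S^\lambda{\downarrow_{\sym{(\lambda_1,n-\lambda_1)}}}$, followed by Lemma \ref{L;James}(i) to kill every $\mu_1\vdash\lambda_1$ with $\mu_1\neq(\lambda_1)$, and the Pieri rule to identify the sole surviving $\mu_2$ as $\overline{\lambda}^1$. The computation $(S^{\mu_1})^{\sym{\lambda_1}}=0$ for $\mu_1\neq(\lambda_1)$ via the James condition is clean: a multi-part James partition of size $<p$ would need its top part $\equiv -1\pmod p$, forcing size $\geq p$. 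The identification of $\mu_2$ is also correct; the inequalities $\mu_{2,i}\geq\lambda_{i+1}$ that characterize $\lambda/\mu_2$ being a horizontal strip, together with $|\mu_2|=n-\lambda_1=\sum_{i\geq 2}\lambda_i$, force $\mu_2=\overline{\lambda}^1$ with Littlewood--Richardson multiplicity one, so the filtration collapses to a single subquotient and the isomorphism follows.

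Two remarks. First, your description of the subquotients (``$\lambda/\mu_2$ a horizontal strip recording the shape $\mu_1$'') is imprecise: for general $\mu_1$ the skew shape $\lambda/\mu_2$ is not a horizontal strip but a skew diagram admitting a Littlewood--Richardson filling of content $\mu_1$. This does not affect your argument, since only $\mu_1=(\lambda_1)$ contributes and in that case $\lambda/\mu_2$ is indeed a horizontal strip, but the statement as written is not what the James--Peel filtration produces. Second, and more substantively, the entire weight of the argument rests on the existence of such a Specht-series filtration for $S^\lambda{\downarrow_{\sym{(\lambda_1,n-\lambda_1)}}}$ over $\F$ with the stated outer-tensor subquotients; you flag this yourself as the main obstacle. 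This is a genuine theorem (James--Peel style, built from the one-box branching rule and Specht series for skew Specht modules), but it is not proved in the present paper nor reducible to the lemmas it records, so your proof is complete only modulo that external input. Hemmer's original argument in \cite{Hemmer1} proceeds through the Schur algebra and the corresponding row-removal statement for Weyl modules rather than through the symmetric-group branching filtration, so your route is genuinely different in technique even though both ultimately lean on classical filtration results.
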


We now begin to finish the proof of Theorem \ref{T;C}. Until the end of the section, fix $p>2$ and an integer $r$ satisfying $0\leq 2r\leq n$. Note that the partition $(2^r, 1^{n-2r})$ is a $p$-restricted partition of $n$. The proof of Theorem \ref{T;C} will be based on a sequence of lemmas.

\begin{lem}\label{L;nu_p}
Let $\lambda=(2^r, 1^{n-2r})\vdash n$ and $\lambda\notin JM(n)_p$. Let $a$ be the largest positive integer such that the nodes $(a,1)$, $(a,2)$ are in $\lambda$ and $\nu_p(h_{a,1}^\lambda)\neq \nu_p(h_{a,2}^\lambda)$. Then there exist non-negative integers $u$, $v$ and $w$ such that $0\leq u,v<p$ and $h_{a,2}^{\lambda}=u+vp^w$.
\end{lem}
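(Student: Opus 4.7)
My plan is to translate Carter's criterion (Theorem \ref{T;Carter}) applied to rows $a+1, \ldots, r$ of $\lambda$ into an arithmetic condition on the hook $h_{a,2}^\lambda$, and then extract the required form by a short $p$-adic valuation analysis. First I would compute the hooks of $\lambda = (2^r, 1^{n-2r})$. Since the second column lies in rows $1, \ldots, r$ only, the hypothesis forces $1 \leq a \leq r$, and direct counting gives $h_{a,1}^\lambda = n - r - a + 2$ and $h_{a,2}^\lambda = r - a + 1$. Setting $m := r - a + 1$ and $N := n - 2r + 1 \geq 1$, the maximality of $a$ combined with Theorem \ref{T;Carter} (which applies as $\lambda$ is $p$-restricted) produces
$$\nu_p(k) = \nu_p(N+k)\ \text{ for all } k \in \{1, \ldots, m-1\}, \qquad \nu_p(m) \neq \nu_p(N+m),$$
the equalities coming from rows $b = a+1, \ldots, r$ via the substitution $k = r - b + 1$, and the inequality recording the failure at row $a$.

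Next, let $s := \nu_p(N)$ and write $N = p^s N'$ with $p \nmid N'$. A direct computation shows that whenever $\nu_p(k) > s$ one has $\nu_p(N+k) = s < \nu_p(k)$, while for $\nu_p(k) < s$ one has $\nu_p(N+k) = \nu_p(k)$ automatically. Applying the first observation throughout $1 \leq k \leq m-1$ gives $\nu_p(k) \leq s$ for all such $k$, and the second applied to $k = m$ together with the failure condition forces $\nu_p(m) \geq s$.

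Finally I would bound $\nu_p(m)$ from above. Writing $m = p^t m''$ with $t := \nu_p(m)$ and $p \nmid m''$, the bound $\nu_p(k) \leq s$ at $k = p^{t+1}$ excludes $m'' \geq p+1$ (which would place $p^{t+1} \leq m - 1$), so $m'' \in \{1, \ldots, p-1\}$. If moreover $m'' \geq 2$, then $p^t \leq m - 1$, giving $t \leq s$; whereas if $m'' = 1$ and $t \geq 1$, taking $k = p^{t-1}$ yields $t - 1 \leq s$, i.e., $t \leq s + 1$. Combined with $t \geq s$, the only possibilities are $t = s$ with $m = m'' p^s$ and $m'' \in \{1, \ldots, p-1\}$, or $t = s + 1$ with $m = p^{s+1}$. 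Setting $(u, v, w) = (0, m'', s)$ respectively $(0, 1, s+1)$ then gives the required representation $h_{a,2}^\lambda = u + v p^w$ with $0 \leq u, v < p$. The main obstacle is clean bookkeeping of the $p$-adic valuations across the three regimes $\nu_p(k) < s$, $\nu_p(k) = s$, $\nu_p(k) > s$, with the boundary cases $m = 1$ (arising when $a = r$) and $t = 0$ requiring a little care.
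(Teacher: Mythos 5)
Your argument is correct and takes a different route from the paper's. The paper works inside the Young diagram by contradiction: writing $h^\lambda_{a,2}=\sum_{i\ge 0}b_ip^i$ and assuming there are nonzero digits above the first positive one, it forms a row $t>a$ from the tail of the expansion, computes $h^\lambda_{t,1}$ and $h^\lambda_{t,2}$, and shows (splitting on whether $b_0$ vanishes) that the valuation disparity persists at row $t$, contradicting the maximality of $a$. You instead abstract the maximality of $a$ into the purely arithmetic system $\nu_p(N+k)=\nu_p(k)$ for $1\le k\le m-1$ together with $\nu_p(N+m)\ne\nu_p(m)$, where $N=n-2r+1$ is constant across rows and $m=h^\lambda_{a,2}$, and solve it by comparison with $s:=\nu_p(N)$: the non-archimedean behaviour of $\nu_p$ settles the regimes $\nu_p(k)<s$ and $\nu_p(k)>s$ automatically, forcing $p^s\mid m$ and $m\le p^{s+1}$. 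This yields the sharper conclusion $m\in\{p^s,2p^s,\ldots,p^{s+1}\}$, i.e.\ $h^\lambda_{a,2}$ has a single nonzero $p$-adic digit (so one can always take $u=0$), which is strictly stronger than the stated $u+vp^w$ form and already contains the second-column half of the paper's Lemma~\ref{L;Case1}. Two cosmetic remarks: the valuation equalities at rows $b>a$ come from the choice of $a$ as the largest such row alone (Carter's criterion serves only to guarantee that some such $a$ exists), and the exclusion of $m''=p$ at the end relies on $p\nmid m''$ rather than on the $k=p^{t+1}$ comparison; neither affects the substance.
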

\begin{proof}
Note that $h_{a,2}^\lambda>0$ and let $h_{a,2}^\lambda$ have $p$-adic sum $\sum_{i\geq 0}b_ip^i$ where $0\leq b_i<p$ for all $i\geq 0$. If $h_{a,2}^\lambda=b_0$, we may assign $u=b_0$ and $v=w=0$. If $b_0<h_{a,2}^\lambda$, let $s$ be the smallest subscript such that $s>0$ and $b_s>0$. We claim that $\sum_{i>s}b_ip^i=0$. Suppose that  $\sum_{i>s}b_ip^i>0$. Set $t=a+\sum_{i>s}b_ip^i$ and note that the nodes $(t,1)$, $(t,2)$ are in $\lambda$. Moreover, we have $h_{t,1}^\lambda=h_{a,1}^\lambda- \sum_{i>s}b_ip^i$ and $h_{t,2}^\lambda=b_0+b_sp^s$. If $b_0\neq 0$, $\nu_p(h_{a,2}^\lambda)=\nu_p(h_{t,2}^\lambda)=0$. However, as $\nu_p(h_{a,1}^\lambda)\neq \nu_{p}(h_{a,2}^\lambda)$, observe that both $\nu_p(h_{a,1}^\lambda)$ and $\nu_p(h_{t,1}^\lambda)$ are non-zero. So $\nu_p(h_{t,1}^\lambda)\neq\nu_p(h_{t,2}^\lambda)$. It contradicts with the choice of $a$ as $t>a$. If $b_0=0$, $\nu_p(h_{a,2}^\lambda)=\nu_p(h_{t,2}^\lambda)=s$. By the choice of $a$, we conclude that $\nu_p(h_{t,1}^\lambda)=s$. It implies that $\nu_p(h_{a,1}^\lambda)=s$. We thus get $\nu_p(h_{a,1}^\lambda)=\nu_p(h_{a,2}^\lambda)=s$, which also contradicts with the choice of $a$. Therefore, the claim is shown. We can set $u=b_0$, $v=b_s$ and $w=s$ by the claim. The lemma follows.
\end{proof}
\begin{lem}\label{L;Case1}
Let $\lambda=(2^r, 1^{n-2r})\vdash n$ and $\lambda\notin JM(n)_p$. If $h_{1,2}^\lambda\geq p$, $\nu_p(h_{1,1}^\lambda)\neq\nu_p(h_{1,2}^\lambda)$ and $\nu_p(h_{a,1}^\lambda)=\nu_p(h_{a,2}^\lambda)$ for all nodes $(a,1)$, $(a,2)$ of $\lambda$ such that $a>1$, then $\nu_p(h_{1,1}^\lambda)$, $\nu_p(h_{1,2}^\lambda)>0$.
\end{lem}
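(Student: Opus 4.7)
The plan is to identify the relevant hook lengths explicitly and reduce everything to a congruence. For $\lambda=(2^r,1^{n-2r})$, I compute $h_{1,1}^\lambda=n-r+1$ and $h_{1,2}^\lambda=r$; more generally, for $2\leq a\leq r$ (which is exactly when $(a,2)\in\lambda$), $h_{a,1}^\lambda=n-r-a+2$ and $h_{a,2}^\lambda=r-a+1$. The key observation is that the difference $h_{a,1}^\lambda-h_{a,2}^\lambda=n-2r+1$ is a constant, call it $d$. Setting $k=r-a+1$, the hypothesis reads: $\nu_p(k+d)=\nu_p(k)$ for every $k\in\{1,2,\ldots,r-1\}$, while $\nu_p(r+d)\neq\nu_p(r)$, and the assumption $h_{1,2}^\lambda\geq p$ is precisely $r\geq p$.

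Next I would show that $p\mid d$. Since $r\geq p$, the set $\{1,2,\ldots,p-1\}$ is contained in $\{1,\ldots,r-1\}$. For each such $k$, $\nu_p(k)=0$, so the hypothesis forces $\nu_p(k+d)=0$, i.e.\ $d\not\equiv -k\pmod{p}$. As $k$ runs over $1,\ldots,p-1$, the residues $-k$ run over every nonzero class modulo $p$, so $d\equiv 0\pmod{p}$.

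With $p\mid d$ in hand, the conclusion follows by two symmetric contradiction steps. If $\nu_p(r)=0$, then $\nu_p(r)<\nu_p(d)$ gives $\nu_p(r+d)=\nu_p(r)=0$, contradicting $\nu_p(h_{1,1}^\lambda)\neq\nu_p(h_{1,2}^\lambda)$. If $\nu_p(r+d)=0$, then $p\nmid r+d$; combined with $p\mid d$ this yields $p\nmid r$, hence $\nu_p(r)=0=\nu_p(r+d)$, again a contradiction. Therefore $\nu_p(h_{1,1}^\lambda)=\nu_p(r+d)>0$ and $\nu_p(h_{1,2}^\lambda)=\nu_p(r)>0$, as desired. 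The only nontrivial step is recognizing that the column-to-column hook difference is constant across all rows of the two-column part of $\lambda$; once that is noted, the whole argument collapses to an elementary pigeonhole on residues mod $p$, so I do not expect any real obstacle.
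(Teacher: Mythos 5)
Your proof is correct, and it takes a genuinely different route from the paper's. The paper's argument relies on the preceding Lemma \ref{L;nu_p} to pin down the $p$-adic shape of $h_{1,1}^\lambda$ and $h_{1,2}^\lambda$, assumes one of the valuations is zero, and then explicitly constructs a row index $u=a_0+1$ (the least significant $p$-adic digit of $h_{1,1}^\lambda$ plus one) at which the valuations in columns $1$ and $2$ can be seen to disagree, contradicting the hypothesis. Your argument bypasses Lemma \ref{L;nu_p} entirely: you exploit the observation that the column-to-column hook difference $d=h_{a,1}^\lambda-h_{a,2}^\lambda=n-2r+1$ is constant in $a$, use the rows $a=2,\ldots,p$ (available because $r=h_{1,2}^\lambda\geq p$) to pigeonhole on residues and force $p\mid d$, and then conclude that $r$ and $r+d$ lie in the same residue class mod $p$, so their valuations are simultaneously zero or simultaneously positive. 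This is cleaner and more self-contained; the paper's version, while slightly heavier, is stylistically consistent with the $p$-adic bookkeeping it reuses in the neighbouring Lemmas \ref{L;nu_p} and \ref{L;Case2}. One minor remark: you never invoke $\lambda\notin JM(n)_p$ explicitly (nor do you need to, since $\nu_p(h_{1,1}^\lambda)\neq\nu_p(h_{1,2}^\lambda)$ already entails it via Carter's criterion), whereas the paper's proof appeals to it briefly; that is not a gap in your argument.
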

\begin{proof}
By the hypotheses and Lemma \ref{L;nu_p}, let $h_{1,1}^\lambda$ and $h_{1,2}^\lambda$ have $p$-adic sums $\sum_{i\geq 0}a_ip^i$ and $b_0+b_sp^s$ respectively, where $0\leq a_i, b_0,b_s<p$ for all $i\geq 0$ and $b_s,s>0$. Suppose that either $\nu_p(h_{1,1}^\lambda)$ or $\nu_p(h_{1,2}^\lambda)$ is zero. Without loss of generality, we may assume that $\nu_p(h_{1,1}^\lambda)=0$. So we have $a_0\neq 0$ while $b_0=0$. Note that $h_{1,1}^\lambda>a_0$ otherwise $\lambda\in JM(n)_p$ as $\lambda$ is a $p$-core. Let $u=a_0+1$ and observe that the nodes $(u,1)$ and $(u,2)$ are in $\lambda$. Moreover, $h_{u,1}^\lambda=\sum_{i>0}a_ip^i$ and $h_{u,2}^\lambda=b_sp^s-a_0$. In particular, from the hypotheses, we have $\nu_p(h_{u,1}^\lambda)=\nu_p(h_{u,2}^\lambda)$. But it is clear to see that $\nu_p(h_{u,1}^\lambda)\geq 1$ and $\nu_p(h_{u,2}^\lambda)=0$. It shows that $\nu_p(h_{u,1}^\lambda)\neq\nu_p(h_{u,2}^\lambda)$, which is a contradiction. One may write a similar proof to get a contradiction for the case $\nu_p(h_{1,2}^\lambda)=0$. Therefore, we conclude that $\nu_p(h_{1,1}^\lambda)$, $\nu_p(h_{1,2}^\lambda)>0$. The proof is now finished.
\end{proof}

\begin{lem}\label{L;Case2}
Let $\lambda=(2^r, 1^{n-2r})\vdash n$ and $\lambda\notin JM(n)_p$. If $\nu_p(h_{1,1}^\lambda)\neq\nu_p(h_{1,2}^\lambda)$ and $\nu_p(h_{a,1}^\lambda)=\nu_p(h_{a,2}^\lambda)$ for all nodes $(a,1)$, $(a,2)$ of $\lambda$ such that $a>1$, then $S^\lambda$ is not a trivial source module.
\end{lem}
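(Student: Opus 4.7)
The plan is to derive the conclusion from the conjugate partition $\lambda'=(n-r,r)$ via Lemma \ref{L;Jamestrivial} together with Lemma \ref{L;p-permutation modules}(iii). Write $H:=h_{1,1}^\lambda=n-r+1$, $D:=H-r=n-2r+1$, $s:=\nu_p(H)$ and $t:=\nu_p(r)$. The hook formulas give $h_{a,1}^\lambda=H-(a-1)$ and $h_{a,2}^\lambda=r-(a-1)$ for $1\leq a\leq r$, so the hypotheses become $s\neq t$ together with
\[
\nu_p(k+D)=\nu_p(k)\qquad\text{for every }k\in\{1,2,\ldots,r-1\}.
\]

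I would first establish that $\lambda'$ is a James partition, which by Lemma \ref{L;James}(i) reduces to showing $\nu_p(H)\geq\ell_p(r)$. To rule out $s<t$, note that in that case $\nu_p(D)=s$, so writing $D=p^sD'$ with $p\nmid D'$, each $k=jp^s$ for $j\in\{1,\ldots,p-1\}$ lies in $\{1,\ldots,r-1\}$ (since $k\leq(p-1)p^s<p^{s+1}\leq p^t\leq r$) and has $\nu_p(k)=s$, while $k+D=p^s(j+D')$. The valuation identity then forces $j\not\equiv -D'\pmod p$ for every such $j$; but $-D'\bmod p$ lies in $\{1,\ldots,p-1\}$, a contradiction. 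Hence $s>t$, so $\nu_p(D)=t$. If $p^{t+1}\leq r-1$, then $k=p^{t+1}$ gives $\nu_p(k+D)=t\neq t+1=\nu_p(k)$, violating the hypothesis, while $p^{t+1}=r$ contradicts $\nu_p(r)=t$; thus $p^t\leq r<p^{t+1}$, which means $\ell_p(r)=t+1$, and $s\geq t+1$ by integrality, yielding $\nu_p(H)\geq\ell_p(r)$.

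Because $r\geq 1$, the partition $\lambda'=(n-r,r)$ is distinct from $(n)$, so Lemma \ref{L;Jamestrivial} shows $S^{\lambda'}$ is not a trivial source module, and Lemma \ref{L;p-permutation modules}(iii) transfers this conclusion to $S^\lambda$. The main subtlety will be the ``all residues modulo $p$ forbidden'' step that rules out $s<t$; the remainder is a routine bookkeeping of $p$-adic valuations, and the edge case $r=1$ is handled automatically since the range $\{1,\ldots,r-1\}$ becomes empty and the argument collapses to $s>t=0$ with $\ell_p(1)=1$.
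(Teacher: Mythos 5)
Your proof is correct and follows the same high-level strategy as the paper: transfer to the conjugate $\lambda'=(n-r,r)$ via Lemma \ref{L;p-permutation modules}(iii), show that $\lambda'$ is a James partition (which by Lemma \ref{L;James}(i) amounts to $\nu_p(h_{1,1}^\lambda)\geq\ell_p(r)$), and then apply Lemma \ref{L;Jamestrivial}. Where you diverge is in the $p$-adic bookkeeping that establishes the James condition. The paper invokes Lemma \ref{L;nu_p} to restrict the $p$-adic expansion of $r=h_{1,2}^\lambda$, then (when $r\geq p$) uses Lemma \ref{L;Case1} to force that expansion to a single nonzero digit, and derives $\nu_p(r)<\nu_p(h_{1,1}^\lambda)$ by examining one carefully chosen witness row; it also handles $r<p$ as a separate case. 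Your argument dispenses with Lemmas \ref{L;nu_p} and \ref{L;Case1} entirely and treats all $r\geq 1$ uniformly: the residue-counting step over $j\in\{1,\ldots,p-1\}$ rules out $\nu_p(H)<\nu_p(r)$ directly, and the single test value $k=p^{\nu_p(r)+1}$ extracts $r<p^{\nu_p(r)+1}$ from the hypothesis, which is exactly what yields $\ell_p(r)=\nu_p(r)+1$. This is a genuine streamlining of the $p$-adic part of the argument, though both routes converge on the same key identification of $\lambda'$ as a James partition distinct from $(n)$.
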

\begin{proof}
By the hypotheses and Lemmas \ref{L;nu_p}, \ref{L;Case1}, if $h_{1,2}^\lambda\geq p$, let $h_{1,1}^\lambda$ and $h_{1,2}^\lambda$ have $p$-adic sums $\sum_{i>0}a_ip^i$ and $b_sp^s$ respectively, where $0\leq a_i,b_s<p$ for all $i>0$ and $b_s,s>0$. Let $t$ be the smallest subscript such that $t>0$ and $a_t>0$. Note that $\nu_p(h_{1,1}^\lambda)=t\neq s=\nu_p(h_{1,2}^\lambda)$ by computation and the hypotheses. We claim that $s<t$. Suppose that $s>t$. Let $u=1+a_tp^t$ and note that the nodes $(u,1)$, $(u,2)$ are in $\lambda$. Moreover, $h_{u,1}^\lambda=\sum_{i\geq t+1}a_ip^i$ and $h_{u,2}^\lambda=b_sp^s-a_tp^t$. In particular, from the hypotheses, $\nu_p(h_{u,1}^\lambda)=\nu_p(h_{u,2}^\lambda)$. However, we also get that $\nu_p(h_{u,1}^\lambda)=\nu_p(\sum_{i\geq t+1}a_ip^i)\geq t+1$ but
$\nu_p(h_{u,2}^\lambda)=\nu_p(p^t(b_sp^{s-t}-a_t))=t$.
The calculation implies that $\nu_p(h_{u,1}^\lambda)\neq\nu_p(h_{u,2}^\lambda)$, which is a contradiction. The claim is now shown.

Suppose that $S^\lambda$ is a trivial source module. Notice that $\lambda'=(h_{1,1}^\lambda-1, h_{1,2}^\lambda)$. By Lemma \ref{L;p-permutation modules} (iii), we know that $S^{\lambda'}$ is a trivial source module. If $h_{1,2}^\lambda<p$, $\nu_p(h_{1,1}^\lambda)>0$ as $\nu_p(h_{1,1}^\lambda)\neq \nu_p(h_{1,2}^\lambda)=0$. Note that $\lambda'$ is a James partition by Lemma \ref{L;James} (i). Due to Lemma \ref{L;Jamestrivial} , we get $h_{1,2}^\lambda=0$. It is a contradiction since $\lambda\notin JM(n)_p$. If $h_{1,2}^\lambda\geq p$, by the claim and Lemmas \ref{L;Case1}, \ref{L;James} (i), we observe that $\lambda'$ is also a James partition as $s<t$. Therefore, by Lemma \ref{L;Jamestrivial}, we get $\lambda'=(n)$. It also contradicts with the condition that $\lambda\notin JM(n)_p$.  Hence $S^{\lambda}$ is not a trivial source module. The proof is now complete.
\end{proof}
\begin{lem}\label{L;Case3}
Let $\lambda=(2^r, 1^{n-2r})\vdash n$ and $\lambda\notin JM(n)_p$. Then $S^\lambda$ is not a trivial source module.
\end{lem}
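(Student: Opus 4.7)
The plan is to reduce to Lemma \ref{L;Case2} by repeatedly applying the fixed-point functor $\mathcal{F}_2$. Since $\lambda\notin JM(n)_p$ and $\lambda$ is $p$-restricted, Theorem \ref{T;Carter} supplies a row index $a$ with $1\leq a\leq r$ such that $\nu_p(h_{a,1}^\lambda)\neq\nu_p(h_{a,2}^\lambda)$; let $a_0$ be the \emph{largest} such index. If $a_0=1$, then the hypotheses of Lemma \ref{L;Case2} hold verbatim for $\lambda$ and we are done. Otherwise, the strategy is to strip off the top $a_0-1$ rows of length $2$ via $\mathcal{F}_2$, producing a partition whose unique row of hook-length discrepancy is row $1$, and then apply Lemma \ref{L;Case2} to that partition.

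Concretely, since $p>2$ we have $\lambda_1=2<p$, so Theorem \ref{T;Hemmer1} yields $\mathcal{F}_2(S^\lambda)\cong S^{\overline{\lambda}^1}$ with $\overline{\lambda}^1=(2^{r-1},1^{n-2r})\vdash n-2$. Because $a_0-1\leq r-1$, I can iterate $\mathcal{F}_2$ a total of $a_0-1$ times (the exponent $2$ in each intermediate partition remains valid throughout) and conclude $\mathcal{F}_2^{\,a_0-1}(S^\lambda)\cong S^\mu$, where $\mu=(2^{r-a_0+1},1^{n-2r})$. A direct hook-length calculation shows that row $a$ of $\mu$ has the same column-$1$ and column-$2$ hook lengths as row $a+a_0-1$ of $\lambda$; in particular $\nu_p(h_{1,1}^\mu)\neq\nu_p(h_{1,2}^\mu)$ (inherited from row $a_0$ of $\lambda$), while $\nu_p(h_{a,1}^\mu)=\nu_p(h_{a,2}^\mu)$ for every $a>1$ (by maximality of $a_0$). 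Hence $\mu$ satisfies the hypotheses of Lemma \ref{L;Case2}, so $S^\mu$ is not a trivial source module.

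To close the argument, suppose for contradiction that $S^\lambda$ is a trivial source module; then $S^\lambda$ is a $p$-permutation module. By Corollary \ref{L;permutation}, $\mathcal{F}_2(S^\lambda)\cong S^{\overline{\lambda}^1}$ is again a $p$-permutation module, and since $p>2$ it is indecomposable, hence a trivial source module. Iterating this observation $a_0-1$ times shows that $S^\mu$ is a trivial source module, contradicting the conclusion of the previous paragraph.

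The only non-routine point in the execution is the bookkeeping in the middle paragraph, namely verifying that the chosen $a_0$ is preserved (shifted by one at each step) under $\mathcal{F}_2$ so that the hypotheses of Lemma \ref{L;Case2} land exactly at row $1$ of $\mu$; this is essentially a direct computation from the formulas $h_{a,1}^\lambda=n-r-a+2$ and $h_{a,2}^\lambda=r-a+1$ valid for $1\leq a\leq r$, and poses no real obstacle.
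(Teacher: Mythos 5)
Your proof is correct and takes essentially the same approach as the paper's: choose the largest row index $a_0$ where the two hook-length $p$-valuations differ, strip the top $a_0-1$ rows of length $2$ by iterating $\mathcal{F}_2$ (via Theorem~\ref{T;Hemmer1} and Corollary~\ref{L;permutation}), and then invoke Lemma~\ref{L;Case2} on the resulting two-column partition. You spell out the hook-length bookkeeping that the paper leaves implicit, but the structure of the argument is identical.
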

\begin{proof}
According to the assumption that $\lambda\notin JM(n)_p$ and Theorem \ref{T;Carter}, there exists a pair of nodes $(a,1)$, $(a,2)$ of $\lambda$ such that $\nu_p(h_{a,1}^\lambda)\neq\nu_p(h_{a,2}^\lambda)$ and $\nu_p(h_{b,1}^\lambda)=\nu_p(h_{b,2}^\lambda)$ for all nodes $(b,1)$, $(b,2)$ of $\lambda$ such that $b>a$.
When $a=1$, the desired result follows by Lemma \ref{L;Case2}. When $a>1$, put $u=a-1$ and $v=2a-2$. By Theorem \ref{T;Hemmer1} and Corollary \ref{L;permutation}, $S^\lambda$ is a trivial source $\F\sym{n}$-module only if $S^{\overline{\lambda}^u}$ is a trivial source $\F\sym{n-v}$-module. Note that $\overline{\lambda}^u$ satisfies all the hypotheses of Lemma \ref{L;Case2}. Therefore, we deduce that $S^{\overline{\lambda}^u}$ is not a trivial source module, which shows that $S^\lambda$ is not a trivial source module. The proof is now complete.
\end{proof}
\begin{cor}\label{C;two-column}
Let $\lambda=(2^r, 1^{n-2r})\vdash n$. Then $S^\lambda$ is a trivial source module if and only if $\lambda\in JM(n)_p$.
\end{cor}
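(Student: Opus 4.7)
The plan is to observe that this corollary is essentially an assembly of results already established. The statement is a biconditional, so I would handle the two directions separately and appeal to earlier results for each.

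For the ``if'' direction, suppose $\lambda = (2^r,1^{n-2r}) \in JM(n)_p$. Then by definition of $JM(n)_p$, the Specht module $S^\lambda$ is simple. Theorem \ref{T;Hemmer} asserts that the trivial source simple $\F\sym{n}$-modules are exactly the simple Specht modules, so in particular every simple Specht module is a trivial source module. Hence $S^\lambda$ is a trivial source module.

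For the ``only if'' direction, I would argue by contrapositive: if $\lambda = (2^r,1^{n-2r}) \notin JM(n)_p$, then I need to show that $S^\lambda$ is not a trivial source module. But this is precisely the content of Lemma \ref{L;Case3}, which has just been established through the chain of lemmas using Carter's criteria (Theorem \ref{T;Carter}), the reduction via the fixed-point functor $\mathcal{F}_{a-1}$ and Theorem \ref{T;Hemmer1} to bring the ``bad'' pair of hook lengths to the top row, and then Lemma \ref{L;Case2} (which itself relied on Lemma \ref{L;p-permutation modules} (iii) to pass to the conjugate partition, together with Lemma \ref{L;Jamestrivial} to rule out James partitions other than $(n)$).

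Since both directions follow directly from results already in hand, the corollary is immediate. There is no real obstacle here; all the combinatorial and module-theoretic work has already been carried out in Lemmas \ref{L;nu_p} through \ref{L;Case3} and the earlier preparatory results. The purpose of stating the corollary separately is to package the conclusion in a form that can then be transferred, via Lemma \ref{L;p-permutation modules} (iii), to the Specht modules $S^{(n-r,r)}$ labelled by two-part partitions (which are conjugate to the partitions $(2^r,1^{n-2r})$), thereby yielding Theorem \ref{T;C}.
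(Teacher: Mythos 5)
Your proposal is correct and follows exactly the paper's own argument: the ``if'' direction comes from Theorem \ref{T;Hemmer} (simple Specht modules are trivial source), and the ``only if'' direction is the contrapositive of Lemma \ref{L;Case3}. Nothing more is needed.
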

\begin{proof}
One direction is clear by Theorem \ref{T;Hemmer} while Lemma \ref{L;Case3} implies the other direction. The corollary follows.
\end{proof}

We close the section by presenting the proof of Theorem \ref{T;C}.
\begin{proof}[Proof of Theorem \ref{T;C}]
One direction is from Theorem \ref{T;Hemmer}. Note that $S^\lambda$ is simple if and only if $S^{\lambda'}$ is simple. Therefore, the other direction can be justified by using Lemma \ref{L;p-permutation modules} (iii) and Corollary \ref{C;two-column}. We are done.
\end{proof}
\section{Proof of Theorem \ref{T;B}}
The section provides the proof of Theorem \ref{T;B}. Theorem \ref{T;B} will be deduced from Theorem \ref{Con;Hudson} after a necessary preparation. Unlike the former sections, our main tool used here is the Brou\'{e} correspondence of trivial source $\F\sym{n}$-modules.

We state a stronger result than the conjecture of Hudson as follows and prove it after a preparation. The result implies her conjecture obviously.
\begin{thm}\label{Con;Hudson}
Let $p=2$, $n\geq 4$ and $\lambda\vdash n$ with $2$-weight $2$. If $S^\lambda$ is an indecomposable, non-simple, trivial source module, then $S^\lambda\cong Y^\mu$ where $\mu=\kappa_2(\lambda)+(2^2)$. %The sum is usual component-wise sum of partitions.
\end{thm}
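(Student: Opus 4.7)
The plan is to apply the Broué correspondence in the $2$-block $B = B_{\kappa_2(\lambda)}$ containing $S^\lambda$. Since $\lambda$ has $2$-weight $2$, a defect group of $B$ is a Sylow $2$-subgroup of $\sym{4}$, hence isomorphic to $D_8$. First I would compute the Brauer construction of $Y^\mu$ at an appropriate vertex. The $2$-core $\kappa_2(\lambda)$ is a staircase partition, hence $2$-restricted, so the $2$-adic expansion of $\mu = \kappa_2(\lambda) + (2,2)$ is $\mu(0) = \kappa_2(\lambda)$ and $\mu(1) = (1,1)$. Therefore $\O_\mu = (2,2,1^{n-4})$ and $\alpha = (n-4,2)$. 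By Theorem \ref{T; Youngvertices}, $Y^\mu$ has vertex $V = \sym{(2,2)} \leq \sym{4}$, the Klein four Sylow $2$-subgroup of $\sym{\O_\mu}$; since $V$ is self-normalising in $\sym{\O_\mu}$, one has $N_{\sym{n}}(V)/V \cong \sym{\alpha} = \sym{n-4}\times\sym{2}$. By Theorem \ref{Erdmann}(iii),
\[
Y^\mu(V) \cong Y^{\kappa_2(\lambda)} \boxtimes Y^{(1,1)} \cong S^{\kappa_2(\lambda)} \boxtimes Y^{(1,1)},
\]
where $Y^{\kappa_2(\lambda)} = S^{\kappa_2(\lambda)}$ is the simple projective module in the defect-zero block of $\sym{n-4}$ labelled by $\kappa_2(\lambda)$.

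Next, I would establish uniqueness via the Broué correspondence. Under the correspondence, $B$ matches a block of $\F\sym{\alpha}$ which is the outer tensor product of the defect-zero block of $\sym{n-4}$ containing $S^{\kappa_2(\lambda)}$ and the principal block of $\sym{2}$. Each factor admits a unique indecomposable projective module (respectively $S^{\kappa_2(\lambda)}$ and $Y^{(1,1)} = \F\sym{2}$), so the correspondent block itself has a unique indecomposable projective module, namely $Y^\mu(V)$. By Theorem \ref{T;Broue}(i), there is therefore a unique (up to isomorphism) trivial source $\F\sym{n}$-module in $B$ with vertex $V$, and that module must be $Y^\mu$. Hence it suffices to show that a vertex $P$ of $S^\lambda$ is $\sym{n}$-conjugate to $V$.

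The vertex identification is the main step and the principal obstacle. Since $\lambda$ is not a $2$-core ($w_2(\lambda) = 2$), $S^\lambda$ is not projective, so $P \neq 1$. The remaining candidates for $P$ inside $D_8$, up to $\sym{n}$-conjugacy, are $C_2$, $C_4$, the normal Klein four $V_1 \leq \sym{4}$, and the full defect $D_8$. For each such $P$ I would compute the correspondent local block of $\F[N_{\sym{n}}(P)/P]$ and its indecomposable projective modules, then argue that none of them can arise as $S^\lambda(P)$ for an indecomposable, non-simple trivial source Specht module in $B$: the case $P \sim D_8$ is handled by identifying the unique trivial source module in $B$ with full vertex and ruling out its isomorphism with any non-simple Specht module via its dimension and Green correspondent; the cases $P \sim C_4$ and $P \sim V_1$ use that the local projectives in the corresponding blocks of $\sym{n-4}$ and $\sym{3}\times\sym{n-4}$ cannot match a Specht-module structure in $B$; and the case $P \sim C_2$ is ruled out by applying Lemma \ref{T;permutaiton} to suitable elementary abelian subgroups to control the stable generic Jordan type. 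Combining this vertex identification with the preceding uniqueness yields $S^\lambda \cong Y^\mu$.
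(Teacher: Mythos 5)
You have the right overall strategy, and the computation of $Y^\mu$'s vertex and of $Y^\mu(V)$ via Theorems \ref{T; Youngvertices} and \ref{Erdmann} is correct. But, as you yourself note, the vertex identification is the principal obstacle, and your proposal leaves it genuinely unresolved. The paper disposes of the small and cyclic candidates at a stroke via \cite[Theorem 1]{Wildon}, which forces $|P|\geq 4$ and $P$ non-cyclic; your suggested generic-Jordan-type argument for the $C_2$ case does not obviously work, since Lemma \ref{T;permutaiton} gives stable type $[1]^m$ for \emph{every} $p$-permutation module, so it cannot distinguish vertex sizes once $S^\lambda$ is already assumed to be a trivial source module. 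The decisive and hardest case is $P=K_4$ (your $V_1$), and the assertion that "the local projectives in the corresponding blocks of $\sym{3}\times\sym{n-4}$ cannot match a Specht-module structure" is not an argument: $S^\lambda(K_4)$ would necessarily \emph{be} such a local projective, so no contradiction follows from the shape of the local block alone. The paper's mechanism here is of an entirely different character. Proposition \ref{L;K4} shows that $\F_{K_4}\!\uparrow^{\sym{4}}$ decomposes into self-dual summands (Lemma \ref{L;decompositon}), so the Green correspondent $\mathcal{G}_{N_{\sym{n}}(K_4)}(S^\lambda)$ is an outer tensor product of self-dual modules, hence self-dual (Lemma \ref{L;outertensor}), hence $S^\lambda$ is self-dual (Lemma \ref{L;Greenself-dual}). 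Lemma \ref{L;p=2simple} then forces $\lambda$ to be neither $2$-regular nor $2$-restricted, which for $2$-weight $2$ and $n>4$ pins $\lambda$ down to a specific shape $(s+2,s-1,\ldots,2,1^3)$, and Giannelli's lower bound \cite[Theorem A]{GE} then forces the vertex to contain $C_2\times C_2$ up to conjugacy, contradicting $P=K_4$. Nothing in your sketch anticipates this self-duality route, and without it the $K_4$ case is open.

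A secondary remark: your uniqueness step via "$B$ matches a block of $\F\sym{\alpha}$ with a unique projective" needs more care, as one must justify that all trivial source modules in $B$ with vertex $V$ land in a single local block. The paper avoids this entirely with Lemma \ref{L;C2timeC_2}: an indecomposable trivial source module with vertex $V=\sym{(2,2)}=C_2\times C_2$ divides $\F_V\!\uparrow^{\sym{n}}=M^{(2^2,1^{n-4})}$, hence is automatically a Young module, and Theorem \ref{T; Youngvertices} together with the block constraint then forces it to be $Y^\mu$. That is cleaner than tracking blocks through the Brou\'e correspondence. You also overlook that there are two $\sym{n}$-conjugacy classes of order-$2$ subgroups inside $D_8$ (a transposition and a double transposition), though Wildon's theorem covers both.
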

We list some lemmas to finish the preparation.
\begin{lem}\label{L;outertensor}
Let $G$, $H$ be finite groups and $M$, $N$ be an $\F G$-module and an $\F H$-module respectively. If both $M$ and $N$ are self-dual, then $M\boxtimes N$ is also self-dual as an $\F[G\times H]$-module.
\end{lem}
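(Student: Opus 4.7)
The plan is to reduce the claim to the standard natural identification $(V \otimes_{\F} W)^* \cong V^* \otimes_{\F} W^*$ for finite-dimensional vector spaces, and check that this identification respects the $G \times H$-action when $V = M$ and $W = N$. First I would recall that, by definition, $M \boxtimes N = M \otimes_{\F} N$ as an $\F$-vector space with the componentwise action $(g,h)(m \otimes n) = gm \otimes hn$. This lets us work with ordinary tensor products and pull back statements about $(M \boxtimes N)^*$ to statements about $(M \otimes_{\F} N)^*$.

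Next I would construct the canonical $\F$-linear isomorphism
\[
\Phi : M^* \otimes_{\F} N^* \longrightarrow (M \otimes_{\F} N)^*, \qquad \Phi(f \otimes g)(m \otimes n) = f(m)\,g(n),
\]
which is a bijection because $M$ and $N$ are finite-dimensional. To see that $\Phi$ is an $\F[G\times H]$-homomorphism, I would compare the two actions on a pure tensor: on the left, $(g,h)\cdot(f \otimes g')$ sends $m \otimes n$ to $f(g^{-1}m)g'(h^{-1}n)$; on the right, $((g,h)\cdot \Phi(f \otimes g'))(m \otimes n) = \Phi(f \otimes g')((g,h)^{-1}(m \otimes n)) = f(g^{-1}m)g'(h^{-1}n)$. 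So the two agree, giving $(M \boxtimes N)^* \cong M^* \boxtimes N^*$ as $\F[G\times H]$-modules.

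Finally, the self-duality hypotheses provide isomorphisms $\alpha : M \xrightarrow{\sim} M^*$ of $\F G$-modules and $\beta : N \xrightarrow{\sim} N^*$ of $\F H$-modules. Then $\alpha \otimes \beta$ is an isomorphism of $\F[G\times H]$-modules $M \boxtimes N \xrightarrow{\sim} M^* \boxtimes N^*$, and composing with $\Phi$ yields $M \boxtimes N \cong (M \boxtimes N)^*$, proving the lemma. There is no real obstacle here; the only point worth being careful about is the equivariance of $\Phi$, which is a short calculation with inverses coming from the contragredient action on duals.
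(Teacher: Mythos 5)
Your proof is correct and follows essentially the same route as the paper: both establish the natural identification $(M\boxtimes N)^*\cong M^*\boxtimes N^*$ as $\F[G\times H]$-modules and then tensor together the given isomorphisms $M\cong M^*$ and $N\cong N^*$. The paper phrases the identification in terms of bases while you verify equivariance of the canonical map directly, but the content is the same.
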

\begin{proof}
Let $\B_M=\{m_1,\ldots, m_s\}$ be a basis of $M$ and $\B_N=\{n_1,\ldots,n_t\}$ be a basis of $N$. For any $\phi\in M^*$, $\psi\in N^*$ and $v=\sum_{i=1}^s\sum_{j=1}^tk_{i,j}m_i\boxtimes n_j\in M\boxtimes N$, view $\phi\boxtimes \psi\in (M\boxtimes N)^*$ by setting $(\phi\boxtimes\psi)
(v)=\sum_{i=1}^s\sum_{j=1}^tk_{i,j}\phi(m_i)\psi(n_j)$. Note that $M^*\boxtimes N^*=(M\boxtimes N)^*$ under the viewpoint. So we only need to show that $M\boxtimes N\cong M^*\boxtimes N^*$ as $\F[G\times H]$-modules. It is clear since $M\cong M^*$ and $N\cong N^*$. %Let $\alpha$ and $\beta$ be the $\F G$-isomorphism and $\F H$-isomorphism respectively such that $M\cong M^*$ via $\alpha$ and $N\cong N^*$ via $\beta$. Define $\Theta:\ M\boxtimes N\mapsto M^*\boxtimes N^*$ by setting $\Theta(v)=\sum_{i=1}^s\sum_{j=1}^tk_{i,j}\alpha(m_i)\boxtimes \beta(n_j)$. Notice that $\Theta$ is surjective, which implies that it is a linear isomorphism by comparing dimensions. One can also check that $\Theta$ preserves the action of $G\times H$. We thus get that $\Theta$ is an $\F[G\times H]$-isomorphism and $M\boxtimes N\cong M^*\boxtimes N^*$ as $\F[G\times H]$-modules, as desired.
\end{proof}
\begin{lem}\label{L;Greenself-dual}
Let $G$ be a finite group and $M$ be an indecomposable $\F G$-module with a vertex $P$. If $N_G(P)\leq H\leq G$, then $M$ is self-dual if and only if $\mathcal{G}_H(M)$ is self-dual.
\end{lem}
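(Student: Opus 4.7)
The plan is to reduce the whole statement to a single observation: the Green correspondence $\mathcal{G}_H$ commutes with the duality functor $*$. Granting this, both directions follow at once from the fact that $\mathcal{G}_H$ is a bijection between isomorphism classes of indecomposable $\F G$-modules with vertex $P$ and isomorphism classes of indecomposable $\F H$-modules with vertex $P$.

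The intermediate lemma to establish is: for any indecomposable $\F G$-module $M$ with vertex $P$, one has $\mathcal{G}_H(M^*) \cong \mathcal{G}_H(M)^*$. To see this, start from the Green correspondence decomposition $M\downarrow_H \cong \mathcal{G}_H(M)\oplus U$, where no indecomposable summand of $U$ admits $P$ as a vertex. Since duality commutes with restriction, applying $*$ yields $M^*\downarrow_H \cong \mathcal{G}_H(M)^*\oplus U^*$. Now $M^*$ has vertex $P$ (because vertices are preserved by $*$), so its Green correspondent is the unique indecomposable summand of $M^*\downarrow_H$ having vertex $P$; likewise $\mathcal{G}_H(M)^*$ is indecomposable with vertex $P$, while no summand of $U^*$ has vertex $P$. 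Krull--Schmidt then forces $\mathcal{G}_H(M^*)\cong \mathcal{G}_H(M)^*$.

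With this in hand, both directions are immediate. If $M\cong M^*$, applying $\mathcal{G}_H$ gives $\mathcal{G}_H(M)\cong \mathcal{G}_H(M^*)\cong \mathcal{G}_H(M)^*$, so $\mathcal{G}_H(M)$ is self-dual. Conversely, if $\mathcal{G}_H(M)\cong \mathcal{G}_H(M)^*$, then $\mathcal{G}_H(M^*)\cong \mathcal{G}_H(M)^*\cong \mathcal{G}_H(M)$, and since Green correspondence is a bijection on isomorphism classes of indecomposable modules with vertex $P$ (and both $M$ and $M^*$ lie in the source category), we conclude $M\cong M^*$.

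There is essentially no real obstacle here: the only subtlety is remembering that duality preserves vertices and commutes with restriction (both noted in \S 2.1 of the paper), so that the duals of modules without vertex $P$ remain without vertex $P$. Once this is checked, the proof is a two-line application of Krull--Schmidt and the uniqueness built into the Green correspondence.
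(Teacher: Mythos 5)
Your proof is correct. The forward direction is essentially the paper's: take $M\downarrow_H \cong \mathcal{G}_H(M)\oplus U$, apply $*$, use that duality preserves vertices, and finish with Krull--Schmidt. Where you diverge is in the converse: you extract a clean intermediate lemma, $\mathcal{G}_H(M^*)\cong\mathcal{G}_H(M)^*$, and then appeal to the injectivity of the Green correspondence (on isomorphism classes of indecomposables with vertex $P$) to recover $M\cong M^*$ from $\mathcal{G}_H(M^*)\cong\mathcal{G}_H(M)$. The paper instead works directly with the induction-side decomposition $\mathcal{G}_H(M)\uparrow^G\cong M\oplus V$, applies $*$, and again invokes Krull--Schmidt. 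Both are standard and both rest on the same underlying facts (the induction decomposition is exactly what makes the Green correspondence injective), but the paper's version stays entirely within the two explicit direct-sum decompositions recalled in \S 2.1 and never needs to cite bijectivity as a separate fact, while your formulation buys a reusable statement ($\mathcal{G}_H$ commutes with duality) that is slightly more than what the lemma itself asks for. One small point worth stating explicitly in your write-up: the indecomposability of $\mathcal{G}_H(M)^*$ (used when you identify it as the vertex-$P$ summand of $M^*\downarrow_H$) follows because duality is an exact anti-equivalence, hence preserves indecomposability.
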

\begin{proof}
By properties of $\mathcal{G}_H(M)$, we have
\begin{align}
M{\downarrow_H}\cong \mathcal{G}_H(M)\oplus U,\ \mathcal{G}_H(M){\uparrow^G}\cong M\oplus V,
\end{align}
where, for any indecomposable direct summand $W$ of the $\F H$-module $U$ or the $\F G$-module $V$, $P$ is not a vertex of $W$. When $M\cong M^*$, by (6.1), we get
\begin{align}
\mathcal{G}_H(M)^*\oplus U^*\cong (M{\downarrow_H})^*=(M^*){\downarrow_H}\cong M{\downarrow_H}\cong \mathcal{G}_H(M)\oplus U.
\end{align}
Note that $*$ preserves vertices of indecomposable modules. We thus deduce that $\mathcal{G}_H(M)\cong \mathcal{G}_H(M)^*$ by (6.2) and the Krull-Schmidt Theorem. If $\mathcal{G}_H(M)\cong \mathcal{G}_H(M)^*$, by (6.1) again, we obtain
\begin{align}
M^*\oplus V^*\cong (\mathcal{G}_H(M){\uparrow^G})^*\cong (\mathcal{G}_H(M)^*){\uparrow^G}\cong \mathcal{G}_H(M){\uparrow^G}\cong M\oplus V.
\end{align}
By (6.3) and the Krull-Schmidt Theorem, we have $M\cong M^*$. The lemma follows.
\end{proof}
\begin{lem}\label{L;p=2simple}
Let $p=2$ and $\lambda$ be a $2$-regular or $2$-restricted partition of $n$. If $S^\lambda$ is self-dual, then $S^\lambda$ is simple.
\end{lem}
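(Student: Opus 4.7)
The plan is to first reduce to the case where $\lambda$ is 2-regular, and then use that the composition factor $D^\lambda$ occurs in $S^\lambda$ with multiplicity one.

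For the reduction, suppose $\lambda$ is 2-restricted. Then $\lambda'$ is 2-regular. Because $p = 2$ the sign module is trivial, so the identity $S^\mu \otimes sgn \cong S_{\mu'}$ degenerates to $(S^\lambda)^* \cong S^{\lambda'}$. Self-duality of $S^\lambda$ then forces $S^{\lambda'} \cong S^\lambda$, so $S^{\lambda'}$ is itself self-dual. Therefore it suffices to prove the statement when $\lambda$ is 2-regular: the simplicity of $S^{\lambda'}$ will transfer back to $S^\lambda$ along this isomorphism.

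Assume now that $\lambda$ is 2-regular. By James, $S^\lambda$ has simple head $D^\lambda$, and $D^\lambda$ occurs with multiplicity one in any composition series of $S^\lambda$. Since every module in $D_{n,2}$ is self-dual, self-duality of $S^\lambda$ gives $\mathrm{soc}(S^\lambda) \cong (D^\lambda)^* \cong D^\lambda$, so $S^\lambda$ has simple socle $D^\lambda$ as well. Now take any composition series $0 = M_0 \subset M_1 \subset \cdots \subset M_k = S^\lambda$. The simple submodule $M_1$ sits inside the simple socle, so $M_1 \cong D^\lambda$; similarly the simple quotient $M_k/M_{k-1}$ is the simple head, so $M_k/M_{k-1} \cong D^\lambda$. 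If $k \geq 2$ this would produce two copies of $D^\lambda$ in the composition series, contradicting the multiplicity-one property. Hence $k = 1$ and $S^\lambda = D^\lambda$ is simple.

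The argument is essentially elementary once the reduction is in place; the only delicate point is keeping the duality bookkeeping straight in characteristic $2$, where the collapse $sgn \cong \F$ allows the 2-restricted case to be swapped with the 2-regular one via $(S^\lambda)^* \cong S^{\lambda'}$. No heavier machinery (fixed-point functors, Brauer constructions) is required.
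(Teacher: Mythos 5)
Your proof is correct and follows essentially the same approach as the paper's: reduce the $2$-restricted case to the $2$-regular case via the characteristic-$2$ isomorphism $S^\lambda \cong S^{\lambda'}$, then observe that self-duality forces both the head and the socle to be $D^\lambda$, contradicting the multiplicity-one property $[S^\lambda : D^\lambda] = 1$ unless $S^\lambda$ is simple. Your write-up makes the duality bookkeeping (head of $S^\lambda$ versus socle of its dual) a bit more explicit than the paper does, but the underlying argument is identical.
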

\begin{proof}
Suppose that $S^\lambda$ is not simple. When $\lambda$ is $2$-regular, $S^\lambda$ has a simple head $D^\lambda$. As $S^\lambda$ is self-dual, it forces that $S^\lambda$ has at least two composition factors isomorphic to $D^\lambda$. It is an obvious contradiction. When $\lambda$ is $2$-restricted, note that $S^\lambda\cong S^{\lambda'}$ as $S^\lambda$ is self-dual. We also get a contradiction similarly. The lemma follows.
\end{proof}
The following lemma is a straightforward fact on symmetric groups.
\begin{lem}\label{L;Normalizer1}
Let $H\leq \sym{n}$ and $r$ be an integer such that $0<r\leq n$. Let $H$ act on $\mathbf{n}$. If $H$ fixes $n-r+1,\ldots, n$, then $$N_{\sym{n}}(H)=N_{\sym{n-r}}(H)\times \sym{r}\leq \sym{(n-r,r)}.$$ Moreover, $N_{\sym{n}}(H)/H\cong(N_{\sym{n-r}}(H)/H)\times \sym{r}$.
\end{lem}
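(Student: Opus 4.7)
My plan is to prove the set-theoretic equality $N_{\sym{n}}(H)=N_{\sym{n-r}}(H)\times\sym{r}$ by a pair of inclusions, and then deduce the quotient isomorphism from the second isomorphism theorem.

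The easy direction $N_{\sym{n-r}}(H)\times\sym{r}\subseteq N_{\sym{n}}(H)$ is immediate from the hypotheses: elements of $N_{\sym{n-r}}(H)$ normalise $H$ by definition, while elements of $\sym{r}$ act trivially on the support of $H$ (which lies in $\{1,\ldots,n-r\}$) and therefore centralise $H$. The resulting product automatically lies in $\sym{(n-r,r)}$.

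For the reverse inclusion, I would exploit the fixed-point set of $H$. Given $g\in N_{\sym{n}}(H)$, if $x\in\mathbf{n}$ is fixed by every element of $H$, then for each $h\in H$ the conjugate $ghg^{-1}\in H$ fixes $gx$; as $h$ ranges over $H$, so does $ghg^{-1}$, whence $gx$ is also fixed by all of $H$. Reading the hypothesis in its intended strong form, namely that $\{n-r+1,\ldots,n\}$ is exactly the fixed-point set of $H$ in $\mathbf{n}$, this forces $g$ to preserve the partition $\{1,\ldots,n-r\}\sqcup\{n-r+1,\ldots,n\}$, hence $g\in\sym{n-r}\times\sym{r}$. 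Writing $g=ab$ in this product, $b\in\sym{r}$ commutes with $H$, so $a=gb^{-1}$ normalises $H$, placing $a\in N_{\sym{n-r}}(H)$ and completing the decomposition.

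For the quotient, observe that $H\leq\sym{n-r}$ is contained in the first factor of the product $N_{\sym{n-r}}(H)\times\sym{r}$ and intersects $\sym{r}$ trivially. The quotient therefore distributes over the second factor, yielding $N_{\sym{n}}(H)/H\cong(N_{\sym{n-r}}(H)/H)\times\sym{r}$. The only non-routine ingredient is the fixed-point argument, and the main subtlety is the interpretation of the hypothesis: a literal reading (\emph{$H$ merely fixes those points, but possibly more}) is too weak, since then $H=\{e\}$ would give the false identity $\sym{n}=\sym{n-r}\times\sym{r}$; in the applications to the Brou\'e correspondence, $H$ is a $p$-subgroup whose orbit structure forces the stronger reading to hold automatically.
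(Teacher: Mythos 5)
The paper does not prove this lemma but simply cites it as \cite[Lemma 2.5]{Luis}, so there is no in-paper proof to compare against; your argument is correct and is the natural one. You are also right to flag the reading of the hypothesis: it must be interpreted as saying that $\{n-r+1,\ldots,n\}$ is \emph{exactly} the fixed-point set of $H$ on $\mathbf{n}$ (equivalently, $H$ fixes no point of $\{1,\ldots,n-r\}$). Under the literal weaker reading the statement is false: take $n=4$, $r=1$, $H=\langle(1,2)\rangle$; then $H$ fixes $4$, but $N_{\sym{4}}(H)=\langle(1,2),(3,4)\rangle$ has order $4$ while $N_{\sym{3}}(H)\times\sym{1}=\langle(1,2)\rangle$ has order $2$. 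With the stronger reading, your two inclusions are airtight: the containment $N_{\sym{n-r}}(H)\times\sym{r}\subseteq N_{\sym{n}}(H)$ uses that $\sym{r}$ is disjoint in support from $H$ hence centralizes it; the reverse inclusion uses that any $g\in N_{\sym{n}}(H)$ permutes the fixed-point set of $H$ (since $h(gx)=g(g^{-1}hg)(x)=gx$), hence lies in $\sym{(n-r,r)}$, after which the $\sym{r}$-component may be stripped off because it centralizes $H$. The quotient isomorphism then follows since $H$ sits inside the first direct factor and meets $\sym{r}$ trivially. In both places the paper invokes the lemma (a Sylow $2$-subgroup of $\sym{2w}$, and $K_4\leq\sym{4}$), the group acts without fixed points on its natural domain, so the stronger hypothesis holds automatically and the applications are sound; but your observation about the phrasing is genuine and worth recording.
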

\begin{lem}\cite[Corollary 2]{Wei}\label{L;Normalizer2}
Let $P$ be a Sylow $2$-subgroup of $\sym{n}$. Then we have $N_{\sym{n}}(P)=P$.
\end{lem}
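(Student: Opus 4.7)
The plan is to reduce to the prime-power case $n=2^k$ and then induct on $k$. Writing the base-$2$ expansion $n=2^{a_1}+\cdots+2^{a_s}$ with $a_1>a_2>\cdots>a_s\geq 0$, one may take $P=W_{a_1}\times\cdots\times W_{a_s}$, where $W_a$ denotes the iterated wreath product $C_2\wr\cdots\wr C_2$ ($a$-fold) acting as a Sylow $2$-subgroup of $\sym{2^a}$ on a block of $2^a$ points. The orbits of $P$ on $\mathbf{n}$ are precisely these $s$ blocks, and their sizes are pairwise distinct. Hence conjugation by any $g\in N_{\sym{n}}(P)$ preserves each $P$-orbit individually, yielding $N_{\sym{n}}(P)=\prod_{i=1}^s N_{\sym{2^{a_i}}}(W_{a_i})$. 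This reduces the claim to the statement $N_{\sym{2^k}}(W_k)=W_k$ for every $k\geq 0$.

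I would prove this prime-power statement by induction on $k$. The cases $k=0$ and $k=1$ are immediate. For $k\geq 2$, realize $W_k=(W_{k-1}\times W_{k-1})\rtimes\langle\tau\rangle$ as the wreath product $W_{k-1}\wr C_2$ embedded inside $G:=\sym{2^{k-1}}\wr\sym{2}$, the latter being the stabilizer in $\sym{2^k}$ of the canonical partition $\{X_1,X_2\}$ into two halves of size $2^{k-1}$. Granting the key containment $N_{\sym{2^k}}(W_k)\subseteq G$, write an arbitrary normalizer element as $g=(h_1,h_2;\epsilon)$ and compute its conjugation action on the base subgroup $W_{k-1}\times W_{k-1}$ of $W_k$. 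Requiring the image to lie again in the base shows that both $h_1$ and $h_2$ must normalize $W_{k-1}$ in $\sym{2^{k-1}}$; by the inductive hypothesis this forces $h_1,h_2\in W_{k-1}$, so $g\in W_{k-1}\wr C_2=W_k$ and the induction closes.

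The main obstacle is the containment $N_{\sym{2^k}}(W_k)\subseteq G$. My approach is to characterize the base $B:=W_{k-1}\times W_{k-1}$ as the unique index-$2$ subgroup of $W_k$ that acts intransitively on $\mathbf{2^k}$. Granted such a characterization, any $g\in N_{\sym{2^k}}(W_k)$ sends $B$ to another index-$2$ subgroup of $W_k$ with the same orbit structure on $\mathbf{2^k}$, since conjugation in $\sym{2^k}$ preserves the number and sizes of orbits; hence $gBg^{-1}=B$, and $g$ permutes the two $B$-orbits $X_1,X_2$, placing $g\in G$. The characterization itself follows from an analysis of the index-$2$ subgroups of $W_k$: any such subgroup is normal, and if intransitive its orbits must have common size (as a normal subgroup of a transitive group) and form a non-trivial block system for $W_k$. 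Using $W_k^{\mathrm{ab}}\cong C_2^k$ to enumerate the $2^k-1$ index-$2$ subgroups together with an inspection of their orbits on $\mathbf{2^k}$, one verifies that $B$ is the only one with exactly two orbits, each of size $2^{k-1}$.
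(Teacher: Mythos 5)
The paper does not prove this lemma; it is stated as a citation to Weisner's 1925 theorem, so there is no in-paper argument to compare against. Your proposal supplies a genuine self-contained proof, and its overall structure — reduce to the prime-power blocks using distinctness of orbit sizes, then induct on $k$ via the wreath decomposition $W_k = W_{k-1}\wr C_2$ — is sound and is essentially the classical route.

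The one place that needs tightening is the final clause: ``one verifies that $B$ is the only [index-$2$ subgroup] with exactly two orbits, each of size $2^{k-1}$.'' This is true, but it is the real content of the key containment $N_{\sym{2^k}}(W_k)\subseteq G$, and ``enumerate and inspect'' is not an argument that scales with $k$. Here is the missing step. Let $K\leq W_k$ have index $2$ and suppose $K\not\supseteq B$. Then $K\cap B$ has index $2$ in $B$ and, being an intersection of two normal subgroups, is normal in $W_k$; in particular it is $\tau$-invariant. Writing $K\cap B=\ker\psi$ for a nonzero homomorphism $\psi\colon W_{k-1}\times W_{k-1}\to C_2$, $\tau$-invariance forces $\psi(g_1,g_2)=\psi_0(g_1)+\psi_0(g_2)$ for a single $\psi_0\in\mathrm{Hom}(W_{k-1},C_2)$, so the diagonal $\Delta=\{(g,g;1):g\in W_{k-1}\}$ lies in $K$. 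Since $K\not\subseteq B$, $K$ also contains some $(g_1,g_2;\tau)$, which moves points across the two halves $X_1,X_2$; as $\Delta$ is already transitive on each half (using transitivity of $W_{k-1}$), the subgroup $\langle\Delta,(g_1,g_2;\tau)\rangle\leq K$ is transitive on $\mathbf{2^k}$. Hence every index-$2$ subgroup of $W_k$ other than $B$ is transitive, which is exactly the uniqueness you need. With this inserted, your argument is complete.
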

We now deal with Theorem \ref{Con;Hudson}. From now on, write $C_4$, $C_2\times C_2$ and $K_4$ to denote the subgroups $\langle (1,2,3,4)\rangle$, $\langle (1,2),(3,4)\rangle$ and $\langle (1,2)(3,4), (1,3)(2,4)\rangle$ of $\sym{4}$ respectively. Note that they are exactly all $2$-subgroups of $\sym{4}$ of order $4$ up to $\sym{4}$-conjugation.
\begin{lem}\label{P;defectgroup Block}
Let $p=2$ and $M$ be an indecomposable $\F\sym{n}$-module. Let $\lambda$ be a $2$-core and $w$ be the $2$-weight of $B_\lambda$. If $B_\lambda$ contains $M$ and $M$ is a trivial source module whose vertices are defect groups of $B_\lambda$, then $M\cong Y^{\alpha}\cong S^{\alpha}$, where $\alpha=\lambda+(2w)\in JM(n)_2$.
\end{lem}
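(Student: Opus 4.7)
The plan is to use Brou\'{e} correspondence (Theorem~\ref{T;Broue}) to show that $M$ is uniquely determined by the hypotheses, and then to identify it with both $Y^\alpha$ and $S^\alpha$ for $\alpha=\lambda+(2w)=(\lambda_1+2w,\lambda_2,\ldots,\lambda_\ell)$.

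First I would realise a defect group $P$ of $B_\lambda$ as a Sylow $2$-subgroup of the copy of $\sym{2w}$ inside $\sym{n}$ acting on $\{n-2w+1,\ldots,n\}$; since $P$ fixes $\{1,\ldots,n-2w\}$ pointwise, Lemmas~\ref{L;Normalizer1} and~\ref{L;Normalizer2} give $N_{\sym{n}}(P)=\sym{n-2w}\times P$ and hence $N_{\sym{n}}(P)/P\cong\sym{|\lambda|}$. The Brauer correspondent of $B_\lambda$ inside $\F\sym{|\lambda|}$ is the block labelled by the $2$-core $\lambda$, which has weight $0$; its unique simple module $S^\lambda=D^\lambda$ is also its only indecomposable projective. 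Applying Theorem~\ref{T;Broue}(i) then yields precisely one isomorphism class of trivial source $\F\sym{n}$-modules in $B_\lambda$ with vertex $P$, so $M$ is uniquely pinned down.

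For $M\cong Y^\alpha$: removing $w$ horizontal $2$-hooks from the first row of $\alpha$ recovers $\lambda$, so $Y^\alpha\in B_\lambda$. Writing the $2$-core as $\lambda=(k,k-1,\ldots,1)$, the uniqueness of the $2$-adic expansion forces $\alpha(0)=\lambda$ together with $\alpha(i)=(1)$ for each $i\geq 1$ such that bit $i$ of $2w$ equals $1$ (and $\alpha(i)=\varnothing$ otherwise). Hence $\O_\alpha$ consists of $|\lambda|$ copies of $1$ together with one copy of $2^i$ for each such $i$, and the Sylow $2$-subgroup of $\sym{\O_\alpha}$ coincides with the Sylow $2$-subgroup $P$ of $\sym{2w}$. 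Theorem~\ref{T; Youngvertices} then places the vertex of $Y^\alpha$ at $P$, and Brou\'{e} uniqueness forces $M\cong Y^\alpha$.

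For $M\cong S^\alpha$: by Lemma~\ref{L;p-permutation modules}(iii), $S^\alpha$ is simple if and only if $S^{\alpha'}$ is simple. Explicitly $\alpha'=(k,k-1,\ldots,2,1^{2w+1})$, which is $2$-restricted. A direct hook-length computation shows that $h_{i,j}^{\alpha'}$ is odd for every node $(i,j)$ with $1\leq i\leq k-1$, while rows $k,\ldots,k+2w$ of $\alpha'$ consist of single nodes. Carter criteria (Theorem~\ref{T;Carter}) therefore yields $S^{\alpha'}$ simple, so $S^\alpha$ is simple, $\alpha\in JM(n)_2$, and $S^\alpha=D^\alpha$. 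The canonical embedding $S^\alpha\hookrightarrow M^\alpha$ lands inside the summand $Y^\alpha$, and since the $2$-regular Young module $Y^\alpha$ has simple head $D^\alpha$ appearing once in its composition series, $Y^\alpha\cong D^\alpha\cong S^\alpha$.

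The hardest step will be securing the final identification $Y^\alpha\cong S^\alpha$, which above rests on the Schur-algebra fact $[Y^\alpha:D^\alpha]=1$ for $2$-regular $\alpha$. A cleaner alternative is to compute $S^\alpha(P)$ directly via the explicit action of $P$ on $\alpha$-tabloids, verify $S^\alpha(P)\cong D^\lambda$, and then invoke Theorem~\ref{T;Broue}(i) to conclude $S^\alpha\cong M$ at once by Brou\'{e} uniqueness.
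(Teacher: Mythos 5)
Your proposal is correct and uses the same central tool (Brou\'{e} correspondence together with the normalizer computation from Lemmas~\ref{L;Normalizer1} and \ref{L;Normalizer2}), but the execution differs from the paper in two places. First, where the paper explicitly computes $M(P)\cong Y^\mu$ for some $2$-restricted $\mu\vdash n-2w$, builds $Y^\nu$ with $\nu=\mu+(2w)$, checks $Y^\nu(P)\cong Y^\mu$ via Theorem~\ref{Erdmann}, and only then invokes the block condition to force $\nu=\alpha$, you observe that the Brauer correspondent of $B_\lambda$ in $\F\sym{n-2w}$ has weight $0$, hence a unique indecomposable projective, and then verify directly from the $2$-adic expansion that $Y^\alpha$ has vertex $P$ and lies in $B_\lambda$; Brou\'{e} uniqueness then pins $M\cong Y^\alpha$ in one step. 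This is clean, but note it relies on the block-compatible refinement of Theorem~\ref{T;Broue}(i) (that $M\in B_\lambda$ iff $M(P)$ lies in the Brauer correspondent block), which the paper never states explicitly, though its own use of the phrase ``it forces $\alpha=\nu$ as $B_\lambda$ contains $M$'' leans on exactly the same background fact. Second, the paper dispatches the identity $Y^\alpha\cong S^\alpha$ with ``it is well-known'' and derives simplicity from self-duality via Lemma~\ref{L;p=2simple}, whereas you prove simplicity directly through the Carter criterion on $\alpha'=(k,k-1,\ldots,2,1^{2w+1})$; your hook-length computation is correct (all hooks in rows $1,\ldots,k-1$ have length of the form $2s+1$), so this is a genuine self-contained improvement over the paper's citation to folklore. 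You are right to flag $Y^\alpha\cong S^\alpha$ as the remaining delicate point; your alternative of computing $S^\alpha(P)$ directly would close it, and is perhaps preferable to relying on $[Y^\alpha:D^\alpha]=1$ without a reference.
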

\begin{proof}
Let $P$ be a Sylow $2$-subgroup of $\sym{2w}$. Note that $P$ is also a vertex of $M$. By Lemmas \ref{L;Normalizer1} and \ref{L;Normalizer2}, we know that $N_{\sym{n}}(P)=N_{\sym{2w}}(P)\times \sym{n-2w}=P\times \sym{n-2w}$. Moreover, $N_{\sym{n}}(P)/P\cong\sym{n-2w}$. Since $M$ is an indecomposable module with a vertex $P$ and a trivial $P$-source, by Theorem \ref{T;Broue} (i), we get that $M(P)$ is an indecomposable projective $\F\sym{n-2w}$-module. In particular, $M(P)\cong Y^\mu$ as $\F\sym{n-2w}$-modules, where $\mu$ is a $2$-restricted partition of $n-2w$. Let $\nu=\mu+(2w)$. By Theorem \ref{T; Youngvertices}, note that $Y^{\nu}$ is an indecomposable $\F\sym{n}$-module with a vertex $P$ and a trivial $P$-source. By the Sylow Theorem, we can require that $P$ is a Sylow $2$-subgroup of $\sym{\O_\nu}$. According to Theorem \ref{Erdmann} and Lemma \ref{L;Normalizer2}, $\sym{n-2w}\cong \N_{\sym{n}}(P)/\N_{\sym{\O_{\nu}}}(P)=N_{\sym{n}}(P)/P$ and $Y^{\nu}(P)\cong Y^{\mu}$. We thus obtain that $M\cong Y^{\nu}$ by Theorem \ref{T;Broue} (i). It forces that $\alpha=\nu$ as $B_\lambda$ contains $M$. Note that there does not exist $\beta\vdash n$ such that $\kappa_2(\beta)=\lambda$ and $\beta\rhd \alpha$. So $Y^{\alpha}\cong S^{\alpha}$ by \cite[2.6]{Donkin}. Moreover, $S^{\alpha}$ is simple by Lemma \ref{L;p=2simple}.
\end{proof}
\begin{lem}\label{L;C2timeC_2}
Let $p=2$, $n\geq 4$ and $M$ be an indecomposable $\F\sym{n}$-module. Let $\lambda$ be a $2$-core and $B_\lambda$ have $2$-weight $2$. If $M$ is a trivial source module with a vertex $C_2\times C_2$ and $B_\lambda$ contains $M$, then $M\cong Y^{\alpha}$, where $\alpha=\lambda+(2^2)$.
\end{lem}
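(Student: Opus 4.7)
The plan is to apply the Brou\'{e} correspondence (Theorem \ref{T;Broue}(i)) to identify $M$ with $Y^\alpha$ via their Brauer constructions. Let $Q := C_2 \times C_2$. Since $M$ is a trivial source module with vertex $Q$, the Brauer construction $M(Q)$ is an indecomposable projective $\F[N_{\sym{n}}(Q)/Q]$-module, and the map $M \mapsto M(Q)$ is a bijection on isomorphism classes. The task is then to match $M(Q)$ with $Y^\alpha(Q)$.

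First I would compute $N_{\sym{n}}(Q)/Q$. Taking $Q = \langle (1,2),(3,4) \rangle$, which fixes $\{5,\ldots,n\}$, Lemma \ref{L;Normalizer1} gives $N_{\sym{n}}(Q) = N_{\sym{4}}(Q) \times \sym{n-4}$, and a direct check shows $N_{\sym{4}}(Q) = D_8$, the Sylow $2$-subgroup of $\sym{4}$. Hence $N_{\sym{n}}(Q)/Q \cong C_2 \times \sym{n-4}$. Since $\F[C_2 \times \sym{n-4}] \cong \F C_2 \otimes_{\F} \F\sym{n-4}$, the indecomposable projectives of this algebra take the form $\F C_2 \boxtimes Y^\mu$ for $\mu$ a $2$-restricted partition of $n-4$, using Theorem \ref{T; Youngvertices}. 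So $M(Q) \cong \F C_2 \boxtimes Y^\mu$ for some such $\mu$, and the proof reduces to showing $\mu = \lambda$.

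Next I would compute $Y^\alpha(Q)$ for $\alpha = \lambda + (2^2)$. Because $\lambda$ is a $2$-core (hence $2$-restricted), its $2$-adic expansion is $\lambda(0) = \lambda$; a quick computation then gives the $2$-adic expansion of $\alpha$ as $\alpha(0) = \lambda$ and $\alpha(1) = (1,1)$. Consequently $\O_\alpha = (2,2,1^{|\lambda|})$, whose Sylow $2$-subgroup is conjugate to $Q$. Theorem \ref{T; Youngvertices} identifies $Q$ as a vertex of $Y^\alpha$, and Theorem \ref{Erdmann}(iii) yields $Y^\alpha(Q) \cong Y^\lambda \boxtimes Y^{(1,1)}$ as an $\F[\sym{|\lambda|} \times \sym{2}]$-module. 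Since $\lambda$ has weight zero, $B_\lambda^{\sym{|\lambda|}}$ has defect zero and $Y^\lambda = S^\lambda$ is simple projective; and $Y^{(1,1)} = M^{(1,1)} = \F\sym{2} \cong \F C_2$. Hence $Y^\alpha(Q) \cong \F C_2 \boxtimes Y^\lambda$.

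The main obstacle is the final block-theoretic step forcing $\mu = \lambda$. Noting that $N_{\sym{n}}(Q) = N_{\sym{n}}(D_8) = D_8 \times \sym{n-4}$ (applying Lemma \ref{L;Normalizer2} inside $\sym{4}$), the Brauer correspondent of $B_\lambda$ in this normaliser, via Brauer's first main theorem together with the standard block-induction description for symmetric groups, is the block $\F D_8 \otimes B_\lambda^{\sym{n-4}}$, and the Green correspondent of $M$ lies in it. By Theorem \ref{T;Broue}(ii), $M(Q)$ must then lie in the corresponding block $\F C_2 \otimes B_\lambda^{\sym{n-4}}$ of $\F[C_2 \times \sym{n-4}]$. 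Because $B_\lambda^{\sym{n-4}}$ is a defect-zero block (as $|\lambda| = n-4$) whose unique indecomposable is $Y^\lambda$, the only indecomposable projective of $\F C_2 \otimes B_\lambda^{\sym{n-4}}$ is $\F C_2 \boxtimes Y^\lambda$. This pins down $\mu = \lambda$, so $M(Q) \cong Y^\alpha(Q)$, and Theorem \ref{T;Broue}(i) yields $M \cong Y^\alpha$.
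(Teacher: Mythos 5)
Your proof is correct, but it takes a genuinely different and more machinery-heavy route than the paper's. The paper's argument is much shorter: since $M$ is a trivial source module with vertex $C_2\times C_2 = \sym{(2,2)}$, which is a Young subgroup, one immediately has $M \mid \F_{C_2\times C_2}\uparrow^{\sym{n}} = M^{(2^2,1^{n-4})}$, so $M$ is a Young module $Y^\nu$. The vertex condition (via Theorem \ref{T; Youngvertices}) then forces $\O_\nu = (2^2,1^{n-4})$, i.e.\ $\nu = \mu + (2^2)$ with $\mu$ a $2$-restricted partition of $n-4$, and the block condition forces $\nu = \lambda + (2^2)$. You instead run the full Brou\'e correspondence: compute $N_{\sym{n}}(Q)/Q \cong C_2\times\sym{n-4}$, identify $M(Q) \cong \F C_2 \boxtimes Y^\mu$, compute $Y^\alpha(Q) \cong \F C_2 \boxtimes Y^\lambda$ via Theorem \ref{Erdmann}(iii), and then use block correspondence in the normaliser to force $\mu = \lambda$. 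Both approaches face the same pinch point of using the block constraint; yours handles it downstairs, via the observation that $N_{\sym{n}}(Q) = N_{\sym{n}}(D_8)$ and that the only block of $D_8\times\sym{n-4}$ with defect group $D_8$ is $\F D_8\otimes B_\lambda^{\sym{n-4}}$, while the paper handles it upstairs from $\kappa_2(\nu)=\lambda$.

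One caveat worth flagging: the step ``the Green correspondent of $M$ lies in [the Brauer correspondent block]'' is correct but is not literally Brauer's first main theorem, since the vertex $Q$ is a proper subgroup of the defect group $D_8$. What you actually need is the standard compatibility between Green correspondence and block induction (Nagao's theorem): because $\mathcal{G}_{N_{\sym{n}}(Q)}(M)$ is a direct summand of $M\downarrow_{N_{\sym{n}}(Q)}$ with $C_{\sym{n}}(Q)\leq N_{\sym{n}}(Q)$, the block $b$ of $N_{\sym{n}}(Q)$ containing it satisfies $b^{\sym{n}} = B_\lambda$, and then the defect-group comparison forces $b = \F D_8\otimes B_\lambda^{\sym{n-4}}$. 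In fact you could bypass the block argument entirely: once you know $M(Q)\cong\F C_2\boxtimes Y^\mu$, set $\nu = \mu + (2^2)$ and observe $Y^\nu(Q)\cong \F C_2\boxtimes Y^\mu$ by Theorem \ref{Erdmann}(iii), so Theorem \ref{T;Broue}(i) already gives $M\cong Y^\nu$; then $\nu=\alpha$ follows from the block condition on $\sym{n}$ exactly as in the paper.
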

\begin{proof}
Notice that $M$ is isomorphic to a Young module since $M\mid M^{(2^2,1^{n-4})}$. As $C_2\times C_2$ is a vertex of $M$, by Theorem \ref{T; Youngvertices}, there exists a $2$-restricted partition of $n-4$, say $\mu$, such that $M\cong Y^\nu$, where $\nu=\mu+(2^2)$. Since $B_\lambda$ contains $M$ and $B_\lambda$ has $2$-weight $2$, we get that $\alpha=\nu$. The lemma is shown.
\end{proof}
\begin{lem}\label{L;decompositon}
Let $p=2$. Then $\F_{K_4}{\uparrow^{\sym{4}}}\cong 2D^{(3,1)}\oplus Sc_{\sym{4}}(K_4)$. In particular, all the indecomposable direct summands of the decomposition of $\F_{K_4}{\uparrow^{\sym{4}}}$ are self-dual.
\end{lem}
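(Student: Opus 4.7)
The plan is to exploit the fact that $K_4$ is the unique normal subgroup of order $4$ in $\sym{4}$, so that $\sym{4}/K_4 \cong \sym{3}$ and the permutation module $\F_{K_4}{\uparrow^{\sym{4}}}$ is canonically identified with the inflation of the left regular $\F\sym{3}$-module along the quotient map $\sym{4} \twoheadrightarrow \sym{3}$.

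First I would decompose the regular module of $\F\sym{3}$ in characteristic $2$. The $2$-regular partitions of $3$ are $(3)$ and $(2,1)$, giving simples $D^{(3)} = \F$ and $D^{(2,1)}$ of dimensions $1$ and $2$; since $(2,1)$ has $2$-weight $0$, the simple $D^{(2,1)}$ is projective. Comparing with $|\sym{3}| = 6$, this forces
\[
\F\sym{3} \;\cong\; P \oplus 2\,D^{(2,1)},
\]
where $P$ denotes the $2$-dimensional projective cover of the trivial $\F\sym{3}$-module; note that $P$ is indecomposable with trivial head and trivial socle, and indeed may be identified with $Sc_{\sym{3}}(C_2)$.

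Next I would inflate both summands to $\F\sym{4}$. The inflation $\Inf D^{(2,1)}$ is a $2$-dimensional simple $\F\sym{4}$-module on which $K_4$ acts trivially; since the only $2$-regular partitions of $4$ are $(4)$ and $(3,1)$, and $D^{(3,1)}$ is the unique non-trivial simple $\F\sym{4}$-module, we must have $\Inf D^{(2,1)} \cong D^{(3,1)}$. Meanwhile $\Inf P$ is $2$-dimensional, indecomposable, and retains a trivial head together with a trivial socle. Because $Sc_{\sym{4}}(K_4)$ is, by definition, the unique indecomposable summand of $\F_{K_4}{\uparrow^{\sym{4}}}$ possessing both a trivial submodule and a trivial quotient (and neither copy of $D^{(3,1)}$ meets either criterion), this identifies $\Inf P \cong Sc_{\sym{4}}(K_4)$ and delivers the claimed decomposition.

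For self-duality, $D^{(3,1)}$ is self-dual because every module in $D_{n,2}$ is self-dual, as noted in Section $2.4$. The Scott module $Sc_{\sym{4}}(K_4)$ is self-dual because dualising preserves the property of carrying both a trivial submodule and a trivial quotient module, and the uniqueness clause in the definition of the Scott module then forces $Sc_{\sym{4}}(K_4)^{*} \cong Sc_{\sym{4}}(K_4)$. The only step requiring care is the identification of $\Inf P$ with $Sc_{\sym{4}}(K_4)$, but this is immediate from the characterising property of Scott modules once the inflation structure of $\F_{K_4}{\uparrow^{\sym{4}}}$ is in hand.
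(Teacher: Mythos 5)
Your proof is correct and takes a genuinely different, more elementary route than the paper. The paper computes the Brauer quotient $\F_{K_4}{\uparrow^{\sym{4}}}(K_4)\cong\F\sym{3}$, decomposes $\F\sym{3}\cong Y^{(1^3)}\oplus 2Y^{(2,1)}$, and then invokes Brou\'{e} correspondence (Theorem \ref{T;Broue}, parts (i) and (iii)) together with a dimension count to lift the summands back and identify them as $D^{(3,1)}$ and $Sc_{\sym{4}}(K_4)$. You instead exploit directly that $K_4\trianglelefteq\sym{4}$, so $\F_{K_4}{\uparrow^{\sym{4}}}$ \emph{is} the inflation of $\F\sym{3}$ along $\sym{4}\twoheadrightarrow\sym{3}$; decomposing the regular module and inflating then gives the answer with no Brauer-construction machinery. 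This is cleaner and self-contained, whereas the paper's route is more in the spirit of the Brou\'{e}-correspondence techniques used elsewhere in Section 6. Both approaches ultimately rest on the same fact (normality of $K_4$), just packaged differently.

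One small inaccuracy worth flagging: the parenthetical identification of $P$ (the $2$-dimensional projective cover of $\F_{\sym{3}}$) with $Sc_{\sym{3}}(C_2)$ is wrong. Since $\F_{C_2}{\uparrow^{\sym{3}}}=M^{(2,1)}\cong D^{(2,1)}\oplus\F$, the Scott module $Sc_{\sym{3}}(C_2)$ is the trivial module, which is $1$-dimensional with vertex $C_2$, while $P$ is $2$-dimensional and projective (trivial vertex). What you actually use --- that $P$ is indecomposable with trivial head and trivial socle, hence its inflation is the indecomposable summand of $\F_{K_4}{\uparrow^{\sym{4}}}$ with a trivial submodule and a trivial quotient --- is correct, so the slip does not affect the argument; you should just delete that aside (or replace $Sc_{\sym{3}}(C_2)$ by $Sc_{\sym{3}}(1)$, the Scott module of the regular module, which is indeed $P$).
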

\begin{proof}
Note that $N_{\sym{4}}(K_4)/K_4\cong \sym{3}$. By an easy calculation,
$\F_{K_4}{\uparrow^{\sym{4}}}(K_4)\cong\F\sym{3}$ as $\F\sym{3}$-modules. It is well-known that $\F\sym{3}\cong Y^{(1^3)}\oplus 2Y^{(2,1)}$ and $D^{(3,1)}\mid \F_{K_4}{\uparrow^{\sym{4}}}$. As both $D^{(3,1)}$ and $Sc_{\sym{4}}(K_4)$ have the vertex $K_4$, by Lemma \ref{L;Brauer} (i), Theorem \ref{T;Broue} (i), (iii) and counting dimensions, $D^{(3,1)}(K_4)\cong Y^{(2,1)}$ and $(Sc_{\sym{4}}(K_4))(K_4)\cong Y^{(1^3)}$. The desired isomorphic formula follows by Theorem \ref{T;Broue} (iii). The second assertion is clear by the isomorphic formula.
\end{proof}
\begin{prop}\label{L;K4}
Let $p=2$, $n\geq 4$ and $H=N_{\sym{n}}(K_4)$. Let $M$ be an indecomposable $\F\sym{n}$-module with a vertex $K_4$. If $M$ is a trivial source module, then, for some $2$-restricted partition $\lambda$ of $n-4$, we have $\mathcal{G}_H(M)\cong D^{(3,1)}\boxtimes Y^\lambda$ or $Sc_{\sym{4}}(K_4)\boxtimes Y^{\lambda}$. In particular, $M$ is self-dual.
\end{prop}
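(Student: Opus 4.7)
The plan is to pin down $\mathcal{G}_H(M)$ by applying the Brou\'{e} correspondence (Theorem \ref{T;Broue}) twice: first, to describe $M(K_4)$ as an indecomposable projective $\F[H/K_4]$-module, and second, to identify the inflations of its $\sym{3}$-factors as $D^{(3,1)}$ and $Sc_{\sym{4}}(K_4)$. Since $K_4\leq\sym{4}$ fixes the letters $5,\ldots,n$ pointwise and is normal in $\sym{4}$, Lemma \ref{L;Normalizer1} yields $H=N_{\sym{4}}(K_4)\times\sym{n-4}=\sym{4}\times\sym{n-4}$, hence $H/K_4\cong\sym{3}\times\sym{n-4}$.

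By Theorem \ref{T;Broue} (i), $M(K_4)$ is an indecomposable projective $\F[\sym{3}\times\sym{n-4}]$-module, and so factors as an outer tensor product of indecomposable projectives of the two factor algebras. The indecomposable projective $\F\sym{n-4}$-modules are exactly the Young modules $Y^\lambda$ with $\lambda\vdash n-4$ a $2$-restricted partition, by Theorem \ref{T; Youngvertices}; the two indecomposable projective $\F\sym{3}$-modules at $p=2$ are $Y^{(2,1)}$ and $Y^{(1^3)}$. Thus $M(K_4)\cong Y^{(2,1)}\boxtimes Y^\lambda$ or $M(K_4)\cong Y^{(1^3)}\boxtimes Y^\lambda$ for some $2$-restricted $\lambda\vdash n-4$.

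Next I would identify the inflations appearing in Theorem \ref{T;Broue} (ii). Since the kernel of $H\to H/K_4$ lies inside the $\sym{4}$-factor, the inflation splits as $\Inf_{\sym{3}}^{\sym{4}}(\,-\,)\boxtimes(\,-\,)$, leaving the $\sym{n-4}$-factor unchanged. Both $D^{(3,1)}$ and $Sc_{\sym{4}}(K_4)$ are indecomposable trivial source $\F\sym{4}$-modules with vertex $K_4$; since $N_{\sym{4}}(K_4)=\sym{4}$, each is its own Green correspondent, and Lemma \ref{L;decompositon} gives $D^{(3,1)}(K_4)\cong Y^{(2,1)}$ and $\bigl(Sc_{\sym{4}}(K_4)\bigr)(K_4)\cong Y^{(1^3)}$. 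Applying Theorem \ref{T;Broue} (ii) to these two modules therefore identifies $\Inf_{\sym{3}}^{\sym{4}}Y^{(2,1)}\cong D^{(3,1)}$ and $\Inf_{\sym{3}}^{\sym{4}}Y^{(1^3)}\cong Sc_{\sym{4}}(K_4)$, so $\mathcal{G}_H(M)$ is one of $D^{(3,1)}\boxtimes Y^\lambda$ or $Sc_{\sym{4}}(K_4)\boxtimes Y^\lambda$, as required.

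Finally, self-duality is formal: $D^{(3,1)}$ is self-dual as a simple module, $Sc_{\sym{4}}(K_4)$ is self-dual by Lemma \ref{L;decompositon}, and Young modules are self-dual. Lemma \ref{L;outertensor} then gives that $\mathcal{G}_H(M)$ is self-dual, and Lemma \ref{L;Greenself-dual} transports this back to $M$. The only delicate point is the bookkeeping in identifying the two inflations via a parallel use of Brou\'{e} correspondence over $\sym{4}$; everything else is routine once that identification is in place.
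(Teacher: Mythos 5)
Your argument is correct, and it follows a route that is recognisably different from the paper's. Both proofs start identically: Lemma~\ref{L;Normalizer1} gives $H=\sym{4}\times\sym{n-4}$ with $H/K_4\cong\sym{3}\times\sym{n-4}$, and Theorem~\ref{T;Broue}~(i) together with the structure of projectives over a product gives $M(K_4)\cong Y^{(2,1)}\boxtimes Y^\lambda$ or $Y^{(1^3)}\boxtimes Y^\lambda$ for some $2$-restricted $\lambda\vdash n-4$. After that the two arguments diverge. The paper builds the candidate correspondents $D^{(3,1)}\boxtimes Y^\lambda$ and $Sc_{\sym{4}}(K_4)\boxtimes Y^\lambda$ first, invokes \cite[Propositions 1.1, 1.2]{KL} to see that they are indecomposable with vertex $K_4$ and trivial source, uses Lemma~\ref{L;Brauer}~(ii) to compute their Brauer quotients, and then matches these against $M(K_4)$ via Theorem~\ref{T;Broue}~(ii) to force $\mathcal{G}_H(M)$ to coincide with one of the candidates. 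You instead compute $\mathcal{G}_H(M)$ directly as the inflation of $M(K_4)$ from $H/K_4$ to $H$, note that inflation along $\sym{4}\times\sym{n-4}\to\sym{3}\times\sym{n-4}$ (kernel inside the first factor) commutes with $\boxtimes$, and identify $\Inf_{\sym{3}}^{\sym{4}}Y^{(2,1)}\cong D^{(3,1)}$ and $\Inf_{\sym{3}}^{\sym{4}}Y^{(1^3)}\cong Sc_{\sym{4}}(K_4)$ by a separate application of Theorem~\ref{T;Broue}~(ii) over $\sym{4}$, using the Brauer quotients computed inside the proof of Lemma~\ref{L;decompositon}. This buys you independence from the two external inputs (the \cite{KL} propositions and Lemma~\ref{L;Brauer}~(ii)) at the cost of the small, elementary observation that inflation factors across an outer tensor product; for full rigour you would want to state that observation explicitly rather than leave it implicit. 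The self-duality argument at the end is the same in both treatments.
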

\begin{proof}
By Lemma \ref{L;Normalizer1}, we have $H=N_{\sym{4}}(K_4)\times \sym{n-4}=\sym{4}\times \sym{n-4}$. Moreover, $H/K_4\cong \sym{(3,n-4)}$. By Theorem \ref{T;Broue} (i), note that $M(K_4)\cong Y^{(2,1)}\boxtimes Y^{\lambda}$ or $Y^{(1^3)}\boxtimes Y^{\lambda}$ as $\F\sym{(3,n-4)}$-modules, where $\lambda$ is a $2$-restricted partition of $n-4$. Furthermore, from \cite[Propositions 1.1, 1.2]{KL}, we know that $D^{(3,1)}\boxtimes Y^\lambda$ and $Sc_{\sym{4}}(K_4)\boxtimes Y^\lambda$ are indecomposable $\F\sym{(4,n-4)}$-modules with a vertex $K_4$ and a trivial $K_4$-source. By Lemma \ref{L;Brauer} (ii), we also have
\begin{align}
&(D^{(3,1)}\boxtimes Y^\lambda)(K_4)\cong D^{(3,1)}(K_4)\boxtimes Y^\lambda\cong Y^{(2,1)}\boxtimes Y^\lambda,\\
& (Sc_{\sym{4}}(K_4)\boxtimes Y^\lambda)(K_4)\cong (Sc_{\sym{4}}(K_4))(K_4)\boxtimes Y^\lambda\cong Y^{(1^3)}\boxtimes Y^\lambda
\end{align}
as $\F\sym{(3,n-4)}$-modules. Note that $N_{\sym{(4,n-4)}}(K_4)=\sym{4}\times\sym{n-4}=H$. Therefore, $(6.4)$ and $(6.5)$ are isomorphic formulae for $\F[H/K_4]$-modules. From Theorem \ref{T;Broue} (ii), we get $\mathcal{G}_H(M)\cong \mathcal{G}_H(D^{(3,1)}\boxtimes Y^\lambda)$ or $\mathcal{G}_H(Sc_{\sym{4}}(K_4)\boxtimes Y^\lambda)$. However, observe that $\mathcal{G}_H(D^{(3,1)}\boxtimes Y^\lambda)=D^{(3,1)}\boxtimes Y^\lambda$ and $\mathcal{G}_H(Sc_{\sym{4}}(K_4)\boxtimes Y^\lambda)=Sc_{\sym{4}}(K_4)\boxtimes Y^\lambda$. The first assertion thus follows. The second one is clear by Lemmas \ref{L;decompositon}, \ref{L;outertensor} and \ref{L;Greenself-dual}.
\end{proof}

%Let $G$ be a finite group and $N$ be a normal subgroup of $G$. In general, unlike the situation occurred in the proof of Proposition \ref{L;K4}, a self-dual $\F[G/N]$-module may not be self-dual again after inflation. For example, let $p=3$ and $N=\langle (1,2,3)\rangle$. Note that $N$ is a vertex of the indecomposable $\F\sym{4}$-module $S^{(2^2)}$. Set $G=N_{\sym{4}}(N)$ and note that $|G|=6$. Therefore, $G/N$ is a cyclic group of order $2$. In particular, $S^{(2^2)}(N)$ is a self-dual $\F[G/N]$-module. Let $M$ be the $\F G$-module obtained from $S^{(2^2)}(N)$ by inflation. By Theorem \ref{T;Broue} (ii), $\mathcal{G}_G(S^{(2^2)})\cong M$. However, by Lemma \ref{L;Greenself-dual}, $M$ is not self-dual as $S^{(2^2)}$ is not self-dual.

\begin{proof}[Proof of Theorem \ref{Con;Hudson}]
Let $P$ be a vertex of $S^\lambda$. We may choose $P$ to be a $2$-subgroup of $\sym{4}$. According to the hypotheses and \cite[Theorem 1]{Wildon}, the order of $P$ is at least $4$ and $P$ is not cyclic. The cases where $P$ is a Sylow $2$-subgroup of $\sym{4}$ or a $2$-subgroup conjugated to $C_2\times C_2$ are done by Lemmas \ref{P;defectgroup Block} and \ref{L;C2timeC_2}.

To prove Theorem \ref{Con;Hudson}, it suffices to show that $P\neq K_4$. Suppose that $P=K_4$. By Proposition \ref{L;K4}, notice that $S^\lambda$ is self-dual. It implies that $\lambda$ is neither $2$-regular nor $2$-restricted by Lemma \ref{L;p=2simple}. As $\lambda$ has $2$-weight $2$, when $n>4$, it forces that $\lambda=(s+2,s-1,s-2,\ldots,2,1^3)$ for some positive integer $s$. From the fact and \cite[Theorem A]{GE}, we get that $P$ contains $C_2\times C_2$ up to $\sym{n}$-conjugation, which is a contradiction. When $n=4$, we have $\lambda=(2^2)$. It is well-known that $S^{(2^2)}\cong D^{(3,1)}$, which contradicts the assumption that $S^\lambda$ is non-simple. So $P\neq K_4$. The proof is now complete.
\end{proof}

Theorem \ref{T;B} is now shown by Theorems \ref{T;Hemmer} and \ref{Con;Hudson}. We end the paper with a corollary which may have its own interest.

\begin{cor}
Let $p=2$, $n\geq 4$ and $\lambda\vdash n$ with $2$-weight $2$. Then $S^\lambda$ is an indecomposable $\F\sym{n}$-module with a vertex $K_4$ and a trivial $K_4$-source if and only if $n=4$ and $\lambda=(2^2)$.
\end{cor}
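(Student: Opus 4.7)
The plan is to combine Theorem \ref{T;B}, Theorem \ref{T; Youngvertices}, and Proposition \ref{L;K4}. For the sufficiency, assume $n=4$ and $\lambda=(2^2)$. Since $S^{(2^2)}\cong D^{(3,1)}$, Lemma \ref{L;decompositon} exhibits $D^{(3,1)}$ as an indecomposable direct summand of $\F_{K_4}{\uparrow^{\sym{4}}}$, so it is a trivial source module with vertex contained in $K_4$. The computation in the proof of that lemma also yields $D^{(3,1)}(K_4)\cong Y^{(2,1)}\neq 0$, and Theorem \ref{T;Broue}(i) then forces the vertex to equal $K_4$.

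For the necessity, suppose $S^\lambda$ is indecomposable with vertex $K_4$ and trivial $K_4$-source. Theorem \ref{T;B} leaves two cases: either (i) $\lambda\in JM(n)_2$ and $S^\lambda$ is simple, or (ii) $S^\lambda\cong Y^\mu$ with $\mu=\kappa_2(\lambda)+(2,2)$. In case (ii), Theorem \ref{T; Youngvertices} identifies the vertex of $Y^\mu$ (up to $\sym{n}$-conjugacy) with a Sylow $2$-subgroup $P_\mu$ of $\sym{\O_\mu}$. Every part of $\O_\mu$ is a power of $2$, so $P_\mu$ decomposes as a direct product of iterated-wreath Sylow $2$-subgroups of the various $\sym{2^{a_i}}$ acting on disjoint blocks of $\{1,\ldots,n\}$. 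For $P_\mu$ to have order $4$ (the order of $K_4$) the only possibility is that $\O_\mu$ has exactly two parts equal to $2$, in which case $P_\mu$ is $\sym{n}$-conjugate to $\langle(1,2),(3,4)\rangle$, which is intransitive on its $4$-point support. This is not $\sym{n}$-conjugate to $K_4$, which acts regularly on its unique orbit of size four, so case (ii) is impossible.

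It remains to treat case (i), where $S^\lambda$ is simple. When $n=4$, the simple Specht modules of $\sym{4}$ are the trivial modules $S^{(4)}\cong S^{(1^4)}$ (whose vertex is a Sylow $2$-subgroup of $\sym{4}$, of order $8$) and $S^{(2^2)}\cong D^{(3,1)}$, so $\lambda=(2^2)$ as required. For $n\geq 5$, I plan to rule out the simple case via Proposition \ref{L;K4}: the Green correspondent of $S^\lambda$ with respect to $H=\sym{4}\times\sym{n-4}$ would be $D^{(3,1)}\boxtimes Y^\nu$ or $Sc_{\sym{4}}(K_4)\boxtimes Y^\nu$ for some non-empty $2$-restricted $\nu\vdash n-4$, and inducing $\mathcal{G}_H(S^\lambda)$ back to $\sym{n}$ and comparing composition factors and dimensions of this induced module with those of the simple $S^\lambda\cong D^\rho$ (using Theorem \ref{T;Broue}(iii) applied to the permutation module $\F_{K_4}{\uparrow^{\sym{n}}}$) forces a contradiction with simplicity. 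The main obstacle is precisely this final step: verifying that no simple Specht module in a $2$-weight-$2$ block of $\sym{n}$ for $n\geq 5$ can admit a Green correspondent of the form prescribed by Proposition \ref{L;K4}. This requires a delicate dimension and composition-factor comparison that leverages the Specht-filtration structure of the induced permutation module together with the Brauer-construction calculations from Lemma \ref{L;Brauer} and Theorem \ref{T;Broue}.
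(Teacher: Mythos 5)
The sufficiency direction and your treatment of case~(ii) of Theorem~\ref{T;B} are correct and essentially agree with the paper's observation that $K_4$ is never $\sym{n}$-conjugate to a Sylow $2$-subgroup of a Young subgroup (your Sylow-order analysis is a valid, more detailed version of this). The genuine gap is exactly the one you flag: case~(i) with $n\geq 5$, i.e.\ ruling out a simple Specht module $S^\lambda$ with vertex $K_4$. You propose inducing the Green correspondent $D^{(3,1)}\boxtimes Y^\nu$ or $Sc_{\sym{4}}(K_4)\boxtimes Y^\nu$ back to $\sym{n}$ and comparing composition factors and dimensions, but you do not carry out that comparison, and it is not clear how to force a contradiction this way, since induction of the Green correspondent produces $S^\lambda$ plus a complement whose structure is not under control.

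The paper closes this gap without any dimension counting. Once $\lambda\in JM(n)_2$, it invokes the James--Mathas classification of irreducible Specht modules in characteristic~$2$: every such $\lambda$ other than $(2^2)$ is either $2$-regular or $2$-restricted. If $\lambda$ is $2$-regular and $S^\lambda$ is simple, then $S^\lambda\cong Y^\lambda$; if $\lambda$ is $2$-restricted, then $S^\lambda\cong Y^{\lambda'}$. In either case Theorem~\ref{T; Youngvertices} says the vertex is a Sylow $2$-subgroup of a Young subgroup, hence not $K_4$ by the same observation you already used in case~(ii). Therefore $\lambda$ is neither $2$-regular nor $2$-restricted, and James--Mathas forces $\lambda=(2^2)$, $n=4$. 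This is both shorter and requires none of the Brauer-construction or composition-factor bookkeeping you anticipated being delicate; Proposition~\ref{L;K4} and the self-duality argument are only needed in the proof of Theorem~\ref{Con;Hudson} (the non-simple case), not here.
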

\begin{proof}
One direction is clear. For the other direction, notice that $K_4$ can not be $\sym{n}$-conjugate to the Sylow $2$-subgroups of the Young subgroups of $\sym{n}$. By Theorems \ref{T; Youngvertices} and \ref{T;B}, we thus deduce that $\lambda\in JM(n)_2$. Due to \cite[Main Theorem]{GMathas}, it is sufficient to show that $\lambda$ is neither $2$-regular nor $2$-restricted. It is well-known that $S^\lambda\cong Y^\lambda$ if $\lambda\in JM(n)_2$ and $\lambda$ is $2$-regular. Similarly, $S^\lambda\cong Y^{\lambda'}$ if $\lambda\in JM(n)_2$ and $\lambda$ is $2$-restricted. Therefore, as $S^\lambda$ has a vertex $K_4$, $\lambda$ is neither $2$-regular nor $2$-restricted. The desired result thus follows.
\end{proof}

\subsection*{Acknowledgments}
The author thanks his supervisor Dr. Kay Jin Lim for some suggestions of refining this paper. He also gratefully thanks Professor David Hemmer for letting him know the question of classification of trivial source Specht modules via a private communication in the conference `Representation Theory of Symmetric Groups and Related Algebras' held in Singapore. Finally, he gratefully thanks an anonymous referee for his or her careful reading and helpful suggestions.

\end{document}